\DeclareMathAlphabet{\mathbbm}{U}{bbm}{m}{n}
\newcommand{\Ifempty}[3]{\ifthenelse{\equal{#1}{}}{#2}{#3}}
\newcommand{\Ifnempty}[2]{\Ifempty{#1}{}{#2}}
\newcommand{\cat}[1]{\mathcal{#1}} 
\newcommand{\ftr}[1]{\mathscr{#1}} 
\newcommand{\nat}[1]{\mathbbm{#1}} 
\newcommand{\obj}[1]{\mathit{#1}} 
\newcommand{\sch}[1]{\mathbold{#1}} 
\newcommand{\fset}[1]{\mathfrak{#1}} 
\newcommand{\set}[1]{{#1}} 
\newcommand{\Free}[1]{\mathcal{F}(#1)} 
\newcommand{\Ab}[1]{\mathcal{A}(#1)} 
\newcommand{\Aff}[1]{\underline{#1}} 
\newcommand{\St}[2]{\{#1|#2\}}
\newcommand{\LK}[1][\kappa]{<\!\!#1}
\newcommand{\OO}{\mathcal{O}} 
\newcommand{\fF}{\ftr{F}}
\newcommand{\fW}{\ftr{W}}
\newcommand{\fG}{\ftr{G}}
\newcommand{\cC}{\cat{C}}
\newcommand{\cD}{\cat{D}}
\newcommand{\cI}{\cat{I}}
\newcommand{\cS}{\cat{S}}
\newcommand{\oX}{\obj{X}}
\newcommand{\oY}{\obj{Y}}
\newcommand{\oZ}{\obj{Z}}
\newcommand{\oW}{\obj{W}}
\newcommand{\oI}{\obj{I}}
\newcommand{\oJ}{\obj{J}}
\newcommand{\oK}{\obj{K}}
\newcommand{\sX}{\sch{X}}
\newcommand{\sS}{\sch{S}}
\newcommand{\sY}{\sch{Y}}
\newcommand{\sZ}{\sch{Z}}
\newcommand{\sW}{\sch{W}}
\newcommand{\sG}{\sch{G}}
\newcommand{\sH}{\sch{H}}
\newcommand{\sV}{\sch{V}}
\newcommand{\sU}{\sch{U}}
\newcommand{\sT}{\sch{T}}
\newcommand{\GL}{\sch{GL}}
\newcommand{\sGm}{\sch{G_m}}
\newcommand{\sGa}{\sch{G_a}}
\newcommand{\Res}[2]{#1\vert_{#2}}
\newcommand{\FY}{\fset{M}}
\newcommand{\FZ}{\fset{N}}
\newcommand{\qF}{\mathcal{F}}
\newcommand{\Sch}[1][]{\cat{S}ch\Ifnempty{#1}{/{#1}}}
\newcommand{\dd}{\partial}
\DeclareMathOperator{\spec}{spec}
\DeclareMathOperator{\Sym}{Sym}
\DeclareMathOperator{\Ehom}{Hom}
\DeclareMathOperator{\Eend}{End}
\newcommand{\eps}{\epsilon}
\newcommand{\iso}{\Isom}
\newcommand{\point}[1][]{\subsubsection{#1}}
\newcommand{\Lmod}[2][\Cc]{#2\!-\!#1}
\newcommand{\Rcoh}[1][E]{(#1)_{coh}}
\newcommand{\Lcoh}[1][E]{(#1)^{coh}}
\newcommand{\Flat}[2][\Cc]{#1_{(#2)}}
\newcommand{\Plng}[2][\Cc]{#1^{(#2)}}
\newcommand{\Op}[1]{{#1}^{op}}
\newcommand{\ZZ}{\mathbb{Z}}
\newcommand{\NN}{\mathbb{N}}
\newcommand{\FF}{\mathbb{F}}
\newcommand{\QQ}{\mathbb{Q}}
\renewcommand{\AA}{\mathbb{A}}
\newcommand{\DD}{\mathbb{D}}
\newcommand{\Cc}{\mathcal{C}}
\newcommand{\Dd}{\mathcal{D}}
\newcommand{\sg}{\sigma}
\newcommand{\w}{\omega}
\newcommand{\ul}[1]{\underline{#1}}
\newcommand{\Alg}[1]{\overline{#1}}
\newcommand{\ti}[1]{\tilde{#1}}
\newcommand{\ra}[1][]{\xrightarrow{#1}}
\newcommand{\mt}{\mapsto}
\newcommand{\Ten}{\otimes}
\providecommand{\1}{\mathbold{1}}
\renewcommand{\1}{\mathbold{1}}
\newcommand{\x}{\times}
\newcommand{\Isom}{\ra[\sim]}
\newcommand{\kk}{\Bbbk}
\newcommand{\Hom}{\ul{\Ehom}}
\newcommand{\Aut}{\ul{Aut}}
\newcommand{\Co}[1]{{{#1}^\vee}}
\newcommand{\Def}[1]{\emph{#1}}
\DeclareMathAlphabet{\mathpzc}{OT1}{pzc}{m}{it}
\renewcommand{\Vec}{\mathpzc{Vec}}
\newcommand{\Rep}{\mathpzc{Rep}}
\newcommand{\Pro}[2][]{\operatorname{Pro}_{#1}({#2})}
\newcommand{\Ind}[2][]{\operatorname{Ind}_{#1}({#2})}
\newcommand{\Lim}[2][]{\underset{#1}{\varprojlim}\,#2}
\newcommand{\Sec}[1]{\S\ref{#1}}
\newcommand{\rpt}[1]{\ref{#1}}
\newcommand{\newthm}[2]{\newtheorem{#1}[subsubsection]{#2}}
\theoremstyle{definition}
\theoremstyle{remark}
\newenvironment{remark}{\begin{rmk}}{\qed\end{rmk}}
\newenvironment{example}{\begin{xmpl}}{\qed\end{xmpl}}
\newcommand{\DefAlias}[2]{\expandafter\xdef\csname #1\endcsname{#2}}
\newcommand{\CiteAlias}[2]{\DefAlias{CITE#1}{#2}}
\renewcommand{\Cite}[2][]{\Ifempty{#1}{\citet{\csname CITE#2\endcsname}}{\citet[#1]{\csname CITE#2\endcsname}}\wlog{Cite :: #2}}
\newcommand{\Deligne}[1]{\Cite[#1]{Deligne}}
\newcommand{\DM}[1]{\Cite[#1]{LNM900II}}
\title{Tannakian formalism over fields with operators}
\author[M. Kamensky]{Moshe Kamensky}
\address{
  Department of Math \\
  University of Notre-Dame \\
  Notre-Dame, IN\\
  USA
}
\email{\url{mailto:mkamensky@nd.edu}}
\urladdr{\url{http://mkamensky.notlong.com}}
\thanks{The author is thankful for support by NSF grant No.~1001741}
\subjclass[2010]{Primary 20G05,12H99,20G42; Secondary 34M15}
\keywords{Tensor categories, Tannakian formalism, Hasse--Schmidt derivations, 
prolongation spaces}
\begin{document}
\begin{abstract}
  We develop a theory of tensor categories over a field endowed with abstract 
  operators. Our notion of a ``field with operators'', coming from work of 
  Moosa and Scanlon, includes the familiar cases of differential and 
  difference fields, Hasse--Schmidt derivations, and their combinations. We 
  develop a corresponding Tannakian formalism, describing the category of 
  representations of linear groups defined over such fields. The paper 
  extends the previously know (classical) algebraic and differential 
  algebraic Tannakian formalisms.
\end{abstract}
\maketitle

\section*{Introduction}
We study fields with operators (briefly described below, and more thoroughly 
in~\Sec{sec:ms}), and linear groups over such fields.  Given such a group 
\(\sG\) (as defined in~\Sec{sec:main}), our goal is to describe the category 
\(\Rep_\sG\) of finite dimensional representations of \(\sG\), in a manner 
similar to the classical Tannakian formalism. In addition to generalising the 
usual Tannakian formalism, this paper forms a natural generalisation and 
reformulation of the theory of differential Tannakian categories 
(\Cite{ovchin}), and especially of the definition of differential tensor 
categories in~\Cite[4]{tannakian}.

We mention that results this kind are expected to have applications to Galois 
theory of linear equations with various operators. The classical Galois 
theories of ordinary differential and difference linear equations (as 
explained in \Cite{singer} and \Cite{singer2}, respectively) may be 
approached via the classical Tannakian formalism (also in~\Deligne{9}).  More 
recently, there are the Galois theory of (linear) partial differential 
equations (initiated by \Cite{cassidysinger}) to which the differential 
Tannakian theory mentioned above was applied in~\Cite{ovchinde} (see 
also~\Cite{ggo}), as well as linear equations involving both derivatives and 
automorphisms (\Cite{hardouinsinger}), and other variants. It is hoped that 
the present paper will provide the tools to approach all these Galois 
theories in a uniform manner, from the Tannakian point of view (of course, 
the classical Tannakian theory has many more applications in different areas, 
and we hope that similar applications will be found for the generalised 
theory in this paper). We sketch the definition of the Galois group (in the 
case of commuting automorphism and Hasse-Schmidt derivations) 
in~\Sec{ssc:galois} below.

The main result of the paper, describing  the analogue of tensor categories, 
as well as the statement that shows the notion to be adequate, i.e., that it 
does axiomatise categories of representations, is in~\Sec{sec:main} 
(specifically, Definition~\ref{def:etensor} and Theorem~\ref{thm:main}). Both 
the definition and the statement are rather immediate once the fundamental 
ideas are developed, so we now turn to a brief overview of the ideas that 
appear in the first two sections.

Our notion of a ``field with operators'' comes from (a variant of) the 
formalism developed by~\citet{MR2602031,ms2}. This formalism includes at 
least the cases of differential fields (fields endowed with a derivation, or 
a vector field), difference fields (fields with an endomorphism), 
Hasse--Schmidt derivations, and their combinations. To explain the idea, 
consider a field \(\kk\) with an endomorphism. One could alternatively 
describe the situation by saying that we are given an action of the monoid 
\(\NN\) of natural numbers on \(\kk\). More generally, one could consider the 
action of a monoid \(M\) on \(\kk\). When \(M\) is infinite, it cannot be 
viewed as a scheme. However, as a set, it is the (filtered) union of finite 
sets, each of which can be viewed as a scheme. Furthermore, the monoid 
operation maps the product of two finite sets in the system into another such 
finite set. In other words, \(M\) is a monoid in the category of ind-finite 
schemes, and we are given an action of \(M\) on \(\spec(\kk)\).

Since any set is the filtered union of its finite subsets, the description 
above accounts for all discrete monoid actions. However, some finite schemes 
do not come from finite sets. Recall that the data of a derivation on the 
field \(\kk\) over the subfield \(\kk_0\) is equivalent to that of a 
\(\kk_0\)-algebra map \(\kk\ra\kk[\eps]=\kk\Ten_{\kk_0}\kk_0[\eps]\) whose 
composition with the unique map \(\kk[\eps]\ra\kk\) is the identity.  
Geometrically, we are given an ``action'' \(\FY_0\x\spec(\kk)\ra\spec(\kk)\) 
of the scheme \(\FY_0=\spec(\kk_0[\eps])\), in such a way that the 
\(\kk_0\)-point of \(\FY_0\) acts as the identity. The finite scheme 
\(\FY_0\) is not a monoid, but it is part of a system defining an ind-scheme, 
the additive formal group \(\FY\), and in characteristic \(0\), each 
``action'' of \(\FY_0\) extends uniquely to an action of \(\FY\). As with the 
discrete case, the further components of the system \(\FY\) correspond to 
iterative application of the operator, i.e., to higher derivations (this 
example, which is classical, is discussed in detail in~\Sec{sec:ms}. We note 
that one could think of the additive formal group as a ``limit'' of the 
additive group of the integers, as the generator \(1\) ``tends to \(0\)'').

It is therefore reasonable to define a ``field with operators'' simply as a 
field with an arbitrary ind-finite scheme monoid action. This is 
(essentially) the approach taken in \citet{MR2602031,ms2}, and which we adopt 
here. We mentioned that similar ideas appear before: for example, 
\Cite[2.4]{buium} discusses the encoding of a certain class of operators by 
suitable algebra maps (In fact, the approach there is somewhat more general, 
see~\Sec{q:monads}). The case of \(\kk[\eps]\) goes back (at least) to Weil 
(unpublished), and is, in any case, classical. The case of Hasse--Schmidt 
derivations is discussed in~\Cite[27]{matsumura}. It appears that the 
geometric description we give is new (though~\Cite{ggo} appears to take a 
geometric approach of the differential case).

We name ind-finite schemes ``formal sets''. The guiding principle is that 
whatever can be done with usual sets should also be possible with formal 
sets. For example, for any set \(S\) there is a free monoid generated by 
\(S\). If \(S\) is a set of endomorphisms of \(\kk\), the resulting free 
monoid acts on \(\kk\) (and vice versa). The same is true for formal sets 
(Proposition~\ref{prp:free}). The free monoid generated by 
\(S=\spec(\kk_0[\eps])\) is the additive formal group precisely if \(\kk_0\) 
contains \(\QQ\). This explains why in characteristic \(0\) (and only in this 
case), a derivation is the same as an action of this formal group.

As another example, if \(S\) is a set and \(M\) is a (discrete) monoid, there 
is an ``induced'' \(M\)-set \(S^M\) of functions from \(M\) to \(S\), and 
this operation is right adjoint to the ``restriction'', from \(M\)-sets to 
sets, where one forgets the \(M\)-action. A similar construction is available 
when \(M\) is now a formal set, and \(S\) is a (nice) scheme. The resulting 
space is (essentially) what one calls the \emph{prolongation space} of \(S\), 
which we discuss  in~\Sec{ssc:prolong} (and which is defined originally 
in~\Cite{MS2}). One may then do ``\(M\)-geometry'' (analogous to Kolchin's 
differential algebraic geometry), where the prolongation spaces allow one to 
view usual algebraic schemes as \(M\)-schemes. In particular, it possible to 
consider linear groups, and their representations.

Recall that in the algebraic case, the category of representations of a 
(pro-) linear group scheme (over a field) is described by equipping the pure 
category structure with a tensor structure, satisfying suitable properties. A 
theorem of Saavedra (\Cite{Saavedra}) then shows that if one also remembers 
the ``forgetful'' functor into the category of vector spaces, the description 
is complete: any tensor category as above is equivalent to the category of 
representations of a linear group scheme, which can be recovered from the 
fibre functor (See~\DM{} for an exposition).

In the case of differential algebraic groups, the tensor structure is 
insufficient. For example, the multiplicative group \(\sG_m\), viewed as a 
differential algebraic group, admits a non-trivial differential algebraic 
homomorphism to the additive group \(\sG_a\), and \(\sG_a\) itself has the 
derivative as an endomorphism. To recover the differential algebraic group in 
this case, \Cite{ovchin} introduces a new operation on representations, as 
follows: One considers a \(2\)-dimensional \(\kk\)-vector space \(D\), which 
admits an additional vector space structure (on the right), coming from the 
derivation. Given a vector space (or a representation) \(V\), one obtains a 
new such object \(\tau(V)=D\hat{\Ten}V\), where the tensor product is with 
respect to the right structure (and the vector space structure on \(\tau(V)\) 
comes from the left one). It is then shown that with this additional 
structure, the differential algebraic group can be recovered.

In~\Cite[4]{tannakian}, the operation described above is abstracted to apply 
in an arbitrary tensor (abelian) category. This is done by defining the 
prolongation of a tensor category \(\cC\) as a certain tensor category of 
exact sequences of objects of \(\cC\). The differential structure is then 
given by a tensor functor \(\tau\) from \(\cC\) to its prolongation. While 
this formalism does work in the required way, the definition of the 
prolongation category is somewhat ad hoc. One of the goals of the current 
paper is to build the prolongation category in a more systematic way. The 
relation of the current formalism with the original one is explained in 
Example~\ref{ex:difftan}. We remark that an alternative formulation of the 
differential theory, including an extension to the case of several 
derivations, is also suggested in a recent preprint, \Cite{ggo}. Their 
approach seems to be similar to the one taken here, and it would be 
interesting to make a precise comparison.

In this paper, we imitate the differential case, but construct the 
prolongation category more systematically. Given a field \(\kk\) on which a 
formal monoid \(M\) (eventually) acts, we identify the analogue of the space 
\(D\) above: essentially, it is the dual of the pro-algebra corresponding to 
\(M\).  In~\Sec{sec:prolong}, we define and study the prolongation of an 
abstract tensor category over \(\kk\) with respect to \(M\). In this section, 
only \(M\) itself (or rather, its algebra) is used, the action does not yet 
appear. In~\Sec{sec:ms} we discuss the notion of a field with operators. This 
is mostly an exposition of parts of~\Cite{MS2}, although there are also new 
results (and the exposition is somewhat different).  Finally, 
in~\Sec{sec:main} we give our main result. We remark that, though the 
statement is completely analogous to the previous cases, the proof in this 
paper is different: rather than repeating a proof similar to the classical 
case, our current proof \emph{reduces} to the classical statement, using the 
description of \(M\)-schemes employed in this paper. As a result, the notion 
of schemes in \(\cC\), and the scheme structure on objects of \(\cC\) does 
not play a role as it played in the differential case (and less explicitly, 
in the classical case).  Nevertheless, we discuss this structure 
in~\Sec{ssc:cschemes}. In~\Sec{ssc:descent}, we prove an analogue of a result 
of Deligne (\Deligne{6.20}), which states that, under suitable assumptions, a 
fibre functor over a large field descends to a fibre functor over the base 
sub-field. This is used in the application to Galois theory.


In the process of writing the paper, I ran into some questions that are not 
essential for the main results, but they do occur naturally. I list some of 
these questions in~\Sec{sec:questions}.

\subsection*{Remarks on notation}
For most of the paper, \(\kk_0\) is a base ring, consisting of ``constants'' 
for the operators. Hence, all maps will be maps over \(\kk_0\). The (pro-) 
finite algebra corresponding to the acting monoid is usually denoted by 
\(E_0\), and the monoid itself by \(\FY_0\).  \(\kk\) will be an extension of 
\(\kk_0\) (usually a field), on which \(\FY_0\) acts.  I remove the subscript 
\(0\) when changing the base to \(\kk\): \(E=\kk\Ten_{\kk_0}E_0\), etc.

If \(E\) is a pro-algebra, \(\spec(E)\) is the ind-scheme given by applying 
\(\spec\) to a system representing \(E\) (thus, if \(E\) is a pro-finite 
algebra, \(\spec(E)\) is closely related to the formal spectrum of the 
pseudo-compact algebra, in the sense 
of~\Cite[Expose~VII\textsubscript{B}]{sga3}, obtained by taking the inverse 
limit on \(E\).  However, this point of view will not be very convenient for 
us).

By \(\kk[\eps]\) I mean the ring of dual numbers over \(\kk\) (so 
\(\eps^2=0\)). By \(\kk[[x]]\) I mean the pro-algebra given by the system 
\((\kk[x]/x^n)\), \(n>0\), with the reduction maps (and similarly for other 
power-series rings). Combining with the previous paragraph, 
\(\spec(\kk[[x]])\) is the ind-scheme represented by the system 
\((\spec(\kk[x]/x^n))\), which can also be thought of as the formal spec of 
the topological algebra \(\kk[[x]]\) (and I will never consider the spectrum 
of the algebra \(\kk[[x]]\)).

I try to use different font styles for different kind of objects. This should 
be visible.

\subsection{An application to Galois theory}\label{ssc:galois}
We briefly recall here how a formalism of the type discussed in the paper can 
be applied to defined Galois groups of linear equations. This is, 
essentially, a repetition of~\Deligne{\S~9} in our setting. It is provided 
here for the benefit of readers who like to have an application in mind, and 
will not be used later in the paper. We use the notation and conventions of 
the rest of the paper, especially those in~\Sec{sec:main}. For concreteness, 
we work with linear difference equations over an iterative Hasse--Schmidt 
field, but the same idea works in general. We note that we allow 
characteristic \(0\), in which case we recover the Galois theory of 
difference equations over a differential field, as in~\Cite{hardouinsinger}.

Let \(\kk\) be a field, \(\FY=\spec(\kk[[x]])\), and assume we are given 
commuting actions of \(\FY\) and an automorphism \(\sg\) on 
\(\sZ=\spec(\kk)\) (i.e., an action of \(\FY\x\ZZ\)). In other words, \(\kk\) 
is an iterative difference---Hasse--Schmidt-differential field. Since \(\sg\) 
commutes with the \(\FY\), the fixed field \(C_\kk\) is an iterative 
Hasse--Schmidt field (abbreviated HS-field from now on).

A linear difference equation \(\sg(x)=Ax\) over \(\kk\) corresponds (up to 
equivalence) to a difference module over \(\kk\), i.e., to a finite 
dimensional vector space \(M\) over \(\kk\), together with a \(\kk\)-linear 
map \(\sg:\kk\Ten_\sg{}M\ra{}M\): The set of solutions of the equation in an 
\(\FY\x\ZZ\)-extension \(A\) of \(\kk\) is identified (by choosing a suitable 
basis for \(M\)) with \(\Ehom_{\kk,\sg}(M,A)\). We are interested in defining a 
(HS-differential-algebraic) Galois group that measures the 
differential-algebraic relations among such solutions.

Let \(\cC\) be the category of difference modules over \(\kk\). This is a 
rigid tensor category over \(C_\kk\) (the dual \(\Co{M}\) is the 
\(\kk\)-vector-space dual, with \(\sg(\phi)(m)=\sg^{-1}(\phi(\sg(m)))\); we 
use here that \(\sg\) is invertible on \(\kk\)).

Given a difference module \(M\) and a number \(k\), we may define a new 
difference module \(\tau_kM\) as follows: \(\tau_kM\) is generated, as a 
\(\kk\)-vector space, by expressions \(\dd_im\), where \(0\le{}i\le{}k\), and 
\(m\in{}M\), subject to the relations
\begin{enumerate}
  \item \(\dd_i(m_1+m_2)=\dd_i(m_1)+\dd_i(m_2)\)
  \item \(\dd_i(am)=\sum_{j=0}^i\dd_j(a)\dd_{i-j}m\)
\end{enumerate}
for \(m_i\in{}M\) and \(a\in\kk\). The difference structure is given by 
\(\sg(\dd_im)=\dd_i\sg(m)\) (this is well defined, since \(\sg\) commutes 
with the \(\dd_i\) on \(\kk\)).

For each \(k\), \(\tau_kM\) has the structure of a module over 
\(E_k=\kk[x]/x^{k+1}\), given by \(x(\dd_im)=\dd_{i-1}m\) (with 
\(\dd_{-1}m=0\)). This modules structure commutes with \(\sg\), and is 
furthermore \(E_k\)-injective: it is enough to show that an element killed by 
\(x^l\) is of the form \(x^{k-l+1}m\), but this is obvious. With the evident 
definition on morphisms, we obtain a functor \(\tau_k\) from \(\cC\) to 
\(\Plng{E_k}\).  We give this functor a tensor structure by mapping 
\(\dd_i(m\Ten{}n)\in\tau_k(M\Ten{}N)\) to 
\(\sum_{j=0}^i\dd_jm\Ten\dd_{i-j}n\in\tau_k(M)\Ten^{E_k}\tau_k(N)\) (as in 
Example~\ref{ex:difftan}).

The functor \(i^!=i_k^!\), corresponding to the unique \(\kk\)-point 
\(E_k\ra\kk\), is given on modules of the form \(\tau_k(M)\) by sending 
\(\dd_jm\) to \(0\) for \(j>0\). The map \(m\mt\dd_0{}m\) is clearly an 
isomorphism \(a_M:M\ra{}i^!\circ\tau(M)\).

Similarly, \(\tau_k\tau_lM\) is generated (as a vector space) by elements 
\(\dd_i\dd_jm\), where \(i\le{}k\), \(j\le{}l\), and \(m\in{}M\).  
Furthermore, the elements \(\dd_k\dd_lm\) generate it as an \(E_k\Ten{}E_l\) 
module. We define an \(E_k\Ten{}E_l\)-module map from \(\tau_k\tau_lM\) to 
\(\Ehom_{E_{k+l}}(E_k\Ten{}E_l,\tau_{k+l}M)\) (a component of 
\(m^!\circ\tau_{k+l}\)) by sending the element \(\dd_k\dd_lm\) to the map 
\(x^sy^t\mt\binom{k+l-s-t}{k-s}\dd_{k+l-s-t}m\) (We have identified 
\(E_k\Ten{}E_l\) with \(\kk[x,y]/(x^{k+1},y^{l+1})\); note that the binomial 
coefficient is \(0\) when \(s>k\) or \(t>l\)). It is clear that both \(a\) 
and \(b\) commute with the action of \(\sg\). This completes the definition 
of \(\cC\) as an \(E\)-tensor category over \(C_\kk\) 
(Definition~\ref{def:etensor}).

The \(E\)-tensor category \(\cC\) admits a fibre functor \(\w\) over \(\kk\), 
namely, forgetting the action of \(\sg\). This fibre functor has an 
\(E\)-structure, given by identifying \(\dd_im\in\tau_kM\) with 
\(\theta_i\Ten{}m\) in \(\Co{E_k}\Ten_\mu{}M\), where \((\theta_i)\) is the 
basis of \(\Co{E_k}\) dual to \((x^i)\). Hence we have an \(E\)-fibre functor 
\(\w\) over \(\kk\).

Let now be \(\oX\) an object of \(\cC\), and let \(\cC_\oX\) be the 
\(E\)-tensor sub-category of \(\cC\) generated by \(\oX\). The fibre functor 
\(\w\) restricts to \(\cC_\oX\), so if \(C_\kk\) is HS-differentially closed 
(which, in the case of positive characteristic, is the same as 
separably-closed, of imperfection degree \(1\); cf.~\Cite{ziegler}), we are 
in the situation of Proposition~\ref{prp:descent}.  Hence, in this case, 
\(\cC_\oX\) admits a fibre functor over \(C_\kk\). The corresponding 
\(E\)-group, obtained from Theorem~\ref{thm:main}, is (by definition) the 
Galois group of \(\oX\) (or a corresponding difference equation).

\section{Categorical Prolongations}\label{sec:prolong}
In this section, our goal is to define the prolongation of a tensor category 
with respect to an algebra \(E\). This will be another tensor category, as 
described in the introduction. The goal is achieved in 
Definition~\ref{def:prolongf} in the finite case, and extended to the 
pro-finite case (which is the general case) in Definition~\ref{def:prolongp}.  
We start by discussing the action of \(E\) on objects in an arbitrary 
\(\kk\)-linear category. Much of the material comes from~\Deligne{5}.

\subsection{Modules in \(\kk\)-linear categories}
Let \(\kk\) be a field, fixed for the entire section. We denote by 
\(\Vec_\kk\) the category of finite dimensional vector spaces over \(\kk\).  
For any vector space \(V\) over \(\kk\), \(\Co{V}\) denotes the linear dual.  

If \(E\) is a finite \(\kk\)-algebra, then \(\Co{E}\) is an \(E\)-module. We 
note that \(\Co{E}\) need not be free:

\begin{example}
  Let \(E=\kk[x,y]\), with \(x^2=y^2=xy=0\).  Let 
  \((\delta,\delta_x,\delta_y)\in\Co{E}^3\) be the basis dual to the basis 
  \((1,x,y)\) of \(E\). Then \(x\delta=x\delta_y=0\) and similarly for \(y\), 
  so \(\Co{E}\) cannot be free.
\end{example}

The actual situation is described by the following.

\begin{prop}\label{prp:injflat}
  Let \(M\) and \(N\) be finite \(E\)-modules. Then \(\Co{(M\Ten_E{}N)}\) is 
  canonically isomorphic to \(\Ehom_E(M,\Co{N})\). In particular, \(N\) is flat 
  if and only if \(\Co{N}\) is injective (as with any commutative ring, this 
  is also equivalent to \(N\) being projective and locally free).
\end{prop}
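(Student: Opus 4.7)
The plan is to prove the canonical isomorphism by interpreting both sides as $\kk$-bilinear maps, and then to deduce the flat/injective equivalence from the fact that $\kk$-linear duality is a faithfully exact functor on $\Vec_\kk$.

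First, I would construct the isomorphism $\Co{(M\Ten_E N)} \Isom \Ehom_E(M,\Co{N})$ directly from universal properties. The left-hand side is $\Ehom_\kk(M\Ten_E N,\kk)$, which by the universal property of the tensor product over $E$ is the space of $\kk$-bilinear maps $\varphi\colon M\x N\ra \kk$ satisfying $\varphi(me,n)=\varphi(m,en)$ for all $e\in E$. Currying $\varphi$ to $m\mt\varphi(m,-)$ identifies such maps with $\kk$-linear maps $M\ra \Co{N}$ which are $E$-linear with respect to the natural $E$-module structure $(e\phi)(n)=\phi(en)$ on $\Co{N}$. This is exactly $\Ehom_E(M,\Co{N})$, and the bijection is visibly natural in $M$.

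Next, for the flat/injective equivalence, I would observe that $N$ is flat iff the functor $M\mt M\Ten_E N$ is exact on finite $E$-modules, while $\Co{N}$ is injective iff $M\mt \Ehom_E(M,\Co{N})$ is exact. By the first part these two functors become naturally isomorphic after composing with $\kk$-duality $\Co{(-)}\colon\Vec_\kk\ra\Vec_\kk$. Since $\Co{(-)}$ is exact and conservative on $\Vec_\kk$ (indeed it is an anti-equivalence), a sequence in $\Vec_\kk$ is exact iff its dual is exact. Therefore exactness of $M\mt M\Ten_E N$ is equivalent to exactness of $M\mt \Co{(M\Ten_E N)}\cong\Ehom_E(M,\Co{N})$, giving the claimed equivalence.

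For the parenthetical remark, I would use that $E$ is a finite $\kk$-algebra, hence Noetherian, so every finite $E$-module is finitely presented; the standard fact that a finitely presented flat module over a commutative ring is projective then gives flat $\Leftrightarrow$ projective, and over a commutative Noetherian ring a finitely generated projective module is locally free. The main potential obstacle is book-keeping of the $E$-module structure on $\Co{N}$ and verifying naturality; once that is in place, the argument is essentially the tensor--hom adjunction combined with duality for finite-dimensional vector spaces.
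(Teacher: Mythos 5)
Your proof is correct and follows essentially the same route as the paper: the paper's entire proof is your first paragraph (the currying/tensor--hom adjunction identifying both sides with bilinear maps $\varphi$ satisfying $\varphi(em,n)=\varphi(m,en)$). The paper leaves the flat/injective equivalence and the parenthetical remark implicit, and your second and third paragraphs fill these in correctly, via exactness and conservativity of $\kk$-duality and the standard finitely-presented-flat-implies-projective facts.
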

\begin{proof}
  An element of \(\Ehom_E(M,\Co{N})\) corresponds, by adjunction, to a 
  \(\kk\)-linear map \(\phi:M\Ten{}N\ra\kk\), such that 
  \(\phi(em,n)=\phi(m,en)\) for all \(e\in{}E\). These are precisely the 
  elements of \(\Co{(M\Ten_E{}N)}\).
\end{proof}

We will be interested in modules in arbitrary \(\kk\)-linear categories.

\begin{defn}
  Let \(\kk\) be a field. By a \Def{\(\kk\)-linear category} we mean an 
  additive category \(\Cc\), together with a \(\kk\)-vector space structure 
  on each abelian group \(\Ehom(\oX,\oY)\), such that
  \begin{enumerate}
    \item Composition is \(\kk\)-bilinear.
    \item Each \(\Ehom(\oX,\oY)\) is finite dimensional
    \item Each object has finite length (i.e., the length of a strictly 
      descending chain starting from a given object is bounded).
  \end{enumerate}
\end{defn}

In particular, each \(\Eend(\oX)\) is a finite (associative) \(\kk\)-algebra.  

\point[Action of \(\Vec_\kk\) on \(\Cc\)]
For any object \(\oX\) of a \(\kk\)-linear category \(\Cc\), the functor 
\(\oY\mt{}\Ehom(\oX,\oY)\) (into \(\Vec_\kk\)) has a left adjoint 
\(V\mt{}V\Ten_\kk\oX\) (\DM{\S~2}). We note that for vector spaces \(V\) and 
\(W\), there is a canonical isomorphism (omitted from notation) 
\((V\Ten_\kk{}W)\Ten_\kk\oX=V\Ten_\kk(W\Ten_\kk\oX)\), since 
\(\Ehom(V\Ten_\kk{}W,\Ehom(\oX,\oY))\) is canonically identified with 
\(\Ehom(V,\Ehom(W,\Ehom(\oX,\oY)))\). Set 
\(\Hom_\kk(V,\oX)=\Co{V}\Ten_\kk\oX\).

\begin{prop}\label{pro:ladj}
  Let \(\Cc\) be a \(\kk\)-linear category, \(\oX\) an object of \(\Cc\).
  \begin{enumerate}
    \item The functor \(\oY\mt\Co{\Ehom(\oY,\oX)}\) (from \(\Cc\) to 
      \(\Vec_\kk\)) is left adjoint to \(V\mt{}V\Ten\oX\).
    \item For any vector space \(V\) finite dimensional over \(\kk\), 
      \(\Hom_\kk(V,-)\) is right adjoint to \(V\Ten_\kk-\)
    \item The functor \(-\Ten_\kk\oX\) is exact
    \item In \(\Op{\Cc}\), \(\Op{(V\Ten_\kk\oX)}\) is canonically isomorphic 
      to \(\Hom_\kk(V,\Op{\oX})\).
  \end{enumerate}
\end{prop}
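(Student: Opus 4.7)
The plan is to exploit finite-dimensionality of \(V\) to reduce every claim to computations with finite biproducts. Since \(-\otimes_\kk\oX\) is a left adjoint it preserves colimits in the vector-space variable, so for \(V=\kk^n\) we get \(V\otimes_\kk\oX\cong\oX^{\oplus n}\) canonically. By additivity of \(\Ehom(\oZ,-)\) and \(\Ehom(-,\oZ)\) on finite biproducts, this yields, for any finite-dimensional \(V\), two natural isomorphisms
\[
\Ehom(\oY,V\otimes_\kk\oX)\cong V\otimes_\kk\Ehom(\oY,\oX),
\qquad
\Ehom(V\otimes_\kk\oX,\oY)\cong\Co{V}\otimes_\kk\Ehom(\oX,\oY);
\]
for the second of these, I combine the defining adjunction \(\Ehom(V\otimes_\kk\oX,\oY)\cong\Ehom_\kk(V,\Ehom(\oX,\oY))\) with the canonical identification \(\Ehom_\kk(V,W)\cong\Co{V}\otimes_\kk W\) valid for finite-dimensional \(V,W\).

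Assertion (3) is then immediate: any short exact sequence in \(\Vec_\kk\) splits, and \(-\otimes_\kk\oX\), being additive, carries split short exact sequences to split short exact sequences in \(\Cc\). For (2), I chain the two isomorphisms above, applying the first with \(V\) replaced by \(\Co{V}\) and the roles of \(\oX,\oY\) swapped, to obtain
\[
\Ehom(V\otimes_\kk\oX,\oY)\cong\Co{V}\otimes_\kk\Ehom(\oX,\oY)\cong\Ehom(\oX,\Co{V}\otimes_\kk\oY)=\Ehom(\oX,\Hom_\kk(V,\oY)).
\]
Claim (1) is the parallel computation \(\Ehom(\oY,V\otimes_\kk\oX)\cong V\otimes_\kk\Ehom(\oY,\oX)\cong\Ehom_\kk(\Co{\Ehom(\oY,\oX)},V)\), where the last step uses \(V\otimes_\kk W\cong\Ehom_\kk(\Co{W},V)\) for finite-dimensional \(W\).

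For (4), my approach is Yoneda: check that \(\Op{(V\otimes_\kk\oX)}\) and \(\Hom_\kk(V,\Op{\oX})=\Co{V}\otimes_\kk\Op{\oX}\) corepresent the same functor on \(\Op{\Cc}\). Testing against \(\Op{\oZ}\), the first value is \(\Ehom_\Cc(\oZ,V\otimes_\kk\oX)\cong V\otimes_\kk\Ehom_\Cc(\oZ,\oX)\) by the first isomorphism of the opening paragraph, while the second value, obtained by applying the defining adjunction of \(\Co{V}\otimes_\kk-\) inside the \(\kk\)-linear category \(\Op{\Cc}\), reduces to \(\Ehom_\kk(\Co{V},\Ehom_\Cc(\oZ,\oX))\cong V\otimes_\kk\Ehom_\Cc(\oZ,\oX)\). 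Yoneda then gives a canonical isomorphism, natural in both \(V\) and \(\oX\).

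The proposition contains no deep content beyond additivity of \(\Cc\) and finite-dimensionality of \(V\); the only real obstacle is tracking naturality carefully, particularly in (4), where one must verify that the identification between the two presentations of an \(\dim V\)-fold biproduct of \(\Op{\oX}\) is induced by linear duality on \(V\) rather than by a mere choice of basis.
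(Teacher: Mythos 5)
Your proposal is correct, but it follows a genuinely different route from the paper's. The paper never chooses a basis: for part (1) it constructs the two directions of the adjunction directly, using the evaluation \(\Co{V}\Ten_\kk V\ra\kk\) and coevaluation \(\kk\ra V\Ten_\kk\Co{V}\), and the verification that the two constructions are mutually inverse is exactly the triangle (zigzag) identities of the rigid structure on \(\Vec_\kk\); parts (2) and (4) are then formal consequences (apply (1) with \(\Co{V}\), respectively run a representability chain in \(\Op{\Cc}\)), and part (3) is disposed of by noting that \(-\Ten_\kk\oX\) has both a left and a right adjoint. You instead reduce everything to finite biproducts via \(\kk^n\Ten_\kk\oX\cong\oX^{\oplus n}\), prove (3) by splitness of short exact sequences in \(\Vec_\kk\) plus additivity (more elementary than the adjoint argument, and arguably more robust when \(\Cc\) is merely additive, since split exactness requires no discussion of which kernels and cokernels exist or are preserved), and obtain (4) by Yoneda against corepresented functors, which is the paper's representability argument in mirror image. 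The price of your route is exactly the point you flag at the end: isomorphisms manufactured through a choice of basis are only legitimate once shown independent of that choice, so to be complete you should first write down the canonical comparison map --- e.g.\ \(V\Ten_\kk\Ehom(\oY,\oX)\ra\Ehom(\oY,V\Ten_\kk\oX)\) sending \(v\Ten f\) to the composite of \(f\) with the map \(\oX\cong\kk\Ten_\kk\oX\ra V\Ten_\kk\oX\) induced by \(v:\kk\ra V\) --- and only then use additivity and induction on \(\dim V\) to see it is invertible; this bookkeeping is routine, and it is precisely what the paper's evaluation/coevaluation formulation provides for free.
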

\begin{proof}
  \begin{enumerate}
    \item Given A morphism \(f:\oY\ra{}V\Ten_\kk\oX\), we obtain a map
      \begin{equation}
        \Co{V}\Ten_\kk\oY\ra[id\Ten_\kk f]\Co{V}\Ten_\kk 
        V\Ten_\kk\oX\ra[ev\Ten_\kk id]\oX
      \end{equation}
      which induces, by adjunction, a map \(\Co{V}\ra{}\Ehom(\oY,\oX)\), and by 
      duality a map \(\Co{f}:\Co{\Ehom(\oY,\oX)}\ra{}V\).

      In the other direction, a map \(g:\Co{\Ehom(\oY,\oX)}\ra{}V\) corresponds 
      by duality to a map \(\Co{V}\ra{}\Ehom(\oY,\oX)\), which corresponds by 
      adjunction to a map \(\Co{V}\Ten_\kk\oY\ra\oX\). Tensoring with \(V\) 
      and combining with co-evaluation, we get
      \begin{equation}
        \Co{g}:\oY\ra V\Ten_\kk\Co{V}\Ten_\kk\oY\ra V\Ten_\kk\oX
      \end{equation}
      
      The statement that the two constructions are inverse to each other is 
      precisely the statement that the usual evaluation and co-evaluation 
      determine a rigid monoidal structure on \(\Vec_\kk\).
    \item Apply the previous statement with \(\Co{V}\)
      
    \item The functor has both left and right adjoints

    \item A morphism \(\oY\ra{}V\Ten_\kk\oX\) in \(\Op{\Cc}\) corresponds to 
      a morphism \(V\Ten_\kk\oX\ra\oY\) in \(\Cc\), which correspond to a 
      linear map \(V\ra{}\Ehom_{\Cc}(\oX,\oY)=\Ehom_{\Op{\Cc}}(\oY,\oX)\), 
      corresponding by duality to a map 
      \(\Co{\Ehom_{\Op{\Cc}}(\oY,\oX)}\ra\Co{V}\), which corresponds by 
      adjunction to a map \(\oY\ra\Hom_\kk(V,\oX)\) (in \(\Op{\Cc}\)). Hence 
      the two objects represent the same functor.
  \end{enumerate}
\end{proof}

\begin{defn}
  Let \(E\) be an associative \(\kk\)-algebra. A \Def{(left) \(E\)-module in 
  \(\Cc\)} is an object \(\oX\) of \(\Cc\), together with a \(\kk\)-algebra 
  map \(E\ra{}\Eend(\oX)\). We denote by \(\Lmod[\Cc]{E}\) the category of left 
  \(E\)-modules in \(\Cc\) (with \(E\)-action preserving maps).
\end{defn}

\point
We fix a finite-dimensional \(\kk\)-algebra \(E\). Then an \(E\)-module  
structure on \(\oX\) is the same as a morphism \(E\Ten\oX\ra\oX\) satisfying  
the obvious relations, and by Proposition~\ref{pro:ladj}, it is also the same 
as a morphism \(\oX\ra\Co{E}\Ten_\kk\oX=\Hom_\kk(E,\oX)\) that makes \(\oX\) 
an \(\Co{E}\)-comodule.

\point\label{pt:eten}
Given an \(E\)-module \(\oX\) in \(\Cc\) and an object \(\oY\) in \(\Cc\), 
the space \(\Ehom(\oX,\oY)\) is a (usual) right \(E\)-module. In other words, 
\(\oX\) represents a functor \(\oY\mt{}\Ehom(\oX,\oY)\) from \(\Cc\) to the 
(abelian, \(\kk\)-linear) category \(\Rcoh\) of finitely generated right 
\(E\)-modules.  This functor has a left adjoint \(\fF_\oX:M\mt{}M\Ten_E\oX\), 
where \(M\Ten_E\oX\) is the co-equaliser
\begin{equation}
  M\Ten E\Ten\oX\rightrightarrows M\Ten\oX\ra M\Ten_E\oX
\end{equation}

\begin{defn}\label{def:flat}
  An \(E\)-module \(\oX\) in \(\Cc\) is \Def{flat} if the functor 
  \(M\mt{}M\Ten_E\oX\) from \(\Rcoh\) to \(\Cc\) is exact. We denote by 
  \(\Flat{E}\) the full sub-category of \(\Lmod{E}\) consisting of flat 
  \(E\)-modules.
\end{defn}

\begin{lemma}
  If for any right ideal \(I\) of \(E\), the map \(I\Ten_E\oX\ra\oX\) is 
  monic, then \(\oX\) is flat.
\end{lemma}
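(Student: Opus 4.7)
The plan is to bootstrap from the hypothesis on right ideals to arbitrary monomorphisms in \(\Rcoh\) by two applications of the snake lemma. Since \(-\Ten_E\oX\) is already right exact (it is a left adjoint, see~\ref{pt:eten}), the content is to show that it preserves monomorphisms.

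The first step is to establish, by induction on \(n\), that for every submodule \(N\hookrightarrow E^n\) the map \(N\Ten_E\oX\to E^n\Ten_E\oX\) is monic. The base case \(n=1\) is exactly the hypothesis. For the inductive step I would split \(E^n=E^{n-1}\oplus E\), set \(N'=N\cap E^{n-1}\) and \(N''=N/N'\hookrightarrow E\), and apply the snake lemma to the natural map from the tensored sequence \(0\to N'\to N\to N''\to 0\) to the tensored split sequence \(0\to E^{n-1}\to E^n\to E\to 0\). By induction the left vertical \(N'\Ten_E\oX\to E^{n-1}\Ten_E\oX\) is monic, and by the hypothesis of the lemma the right vertical \(N''\Ten_E\oX\to E\Ten_E\oX\) is monic. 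The splitness of the bottom row and the elementary observation that a morphism whose composition with a monic is monic is itself monic together promote the right-exact top row to a genuine short exact sequence, at which point the snake lemma gives \(\ker(N\Ten_E\oX\to E^n\Ten_E\oX)=0\).

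The second step handles an arbitrary monomorphism \(N\hookrightarrow M\) in \(\Rcoh\). I would choose a surjection \(\pi:E^n\twoheadrightarrow M\) with kernel \(R\), set \(N'=\pi^{-1}(N)\), and apply the snake lemma a second time to the comparison of the tensored sequences arising from \(0\to R\to N'\to N\to 0\) and \(0\to R\to E^n\to M\to 0\). The first step furnishes monicity of the middle vertical \(N'\Ten_E\oX\to E^n\Ten_E\oX\) and, by the same factoring trick, monicity of \(R\Ten_E\oX\to N'\Ten_E\oX\), so again both tensored rows are short exact, and the snake lemma yields \(\ker(N\Ten_E\oX\to M\Ten_E\oX)=0\). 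The only real obstacle, recurring at both stages, is verifying that the right-exact sequences produced by \(-\Ten_E\oX\) are actually short exact before invoking snake; this is supplied throughout by the elementary ``factor through a monic'' fact.
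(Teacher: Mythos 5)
Your proposal is correct and matches the paper's proof in approach: the paper's entire proof reads ``Same as for usual modules,'' i.e.\ it invokes precisely the classical two-stage bootstrap (right ideals \(\Rightarrow\) submodules of \(E^n\) by induction with the split sequence \(0\to E^{n-1}\to E^n\to E\to 0\), then \(\Rightarrow\) arbitrary monomorphisms in \(\Rcoh\) via a presentation), which is what you have written out, with the element-free ``factor through a monic'' and snake-lemma arguments making it valid in the abelian category \(\Cc\) where element chases are unavailable.
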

\begin{proof}
  Same as for usual modules
\end{proof}

\begin{example}\label{ex:diff}
  Let \(\DD(\Cc)\) be the category of exact sequences 
  \(0\ra\oX\ra[i]\oY\ra[\pi]\oX\ra{}0\) in \(\Cc\), where the morphisms are 
  morphisms of exact sequences, in which the two side maps agree. This 
  category can be identified with \(\Flat{E}\), for \(E=\kk[\eps]\) (with 
  \(\eps^2=0\)). Namely, a sequence as above is identified with \(\oY\), with 
  \(\eps\) acting as \(i\circ\pi\) (and the sequence is exact precisely if 
  \(\oY\) is flat).
\end{example}

\begin{example}\label{ex:prod1}
  If \(E=E_1\x{}E_2\), then \(\Lmod{E}\) can be identified with 
  \(\Lmod{E_1}\x\Lmod{E_2}\), and likewise for \(\Flat{E}\). In particular, 
  for \(E=\kk\x\kk\), both \(\Lmod{E}\) and \(\Flat{E}\) are the category of 
  pairs of objects of \(\Cc\).
\end{example}

\begin{prop}[\Deligne{5.2}]
  Given an \(E\)-module \(\oX\) in \(\Cc\), the functor \(\fF_\oX\) 
  from~\ref{pt:eten} is right-exact. The assignment \(\oX\mt\fF_\oX\) is an 
  equivalence between \(\Lmod{E}\) and right-exact, \(\kk\)-linear functors 
  from \(\Rcoh\) to \(\Cc\). Flat modules correspond to exact functors under 
  this equivalence.
\end{prop}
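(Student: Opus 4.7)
The proposal is a standard Eilenberg--Watts style argument, so the main work is to carefully identify the inverse construction and check that the two round-trips are natural isomorphisms.

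First I would verify right-exactness of $\fF_\oX$. The defining coequaliser $M\Ten E\Ten\oX\rightrightarrows M\Ten\oX\to M\Ten_E\oX$ is natural in $M$, and by Proposition~\ref{pro:ladj}(3) the functors $-\Ten_\kk\oX$ and $-\Ten_\kk(E\Ten_\kk\oX)$ are exact on $\Rcoh$ (as exactness on vector spaces is preserved). A short diagram chase (applying the five lemma for cokernels) then shows that $\fF_\oX$ is right-exact and $\kk$-linear.

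For the equivalence, I would construct a quasi-inverse $\fG:F\mapsto\fG(F)$ as follows. Given a right-exact $\kk$-linear $F\colon\Rcoh\to\Cc$, set $\fG(F)=F(E)$, where $E$ is viewed as a right $E$-module. Left multiplication defines a $\kk$-algebra map $E\to\Eend_{\Rcoh}(E)$, and applying $F$ gives $E\to\Eend_\Cc(F(E))$; this is the $E$-module structure in $\Cc$. In the other direction, there is a canonical isomorphism $\fF_\oX(E)=E\Ten_E\oX\cong\oX$ of $E$-modules in $\Cc$, so $\fG\circ(\oX\mapsto\fF_\oX)\cong\Id$.

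The heart of the argument is the other composition. Given any $M\in\Rcoh$, choose a finite presentation $E^{m}\to E^{n}\to M\to 0$. Applying any right-exact $\kk$-linear functor $F$ and then using that $F$ is $\kk$-linear (so $F(E^n)=F(E)^n=\fF_{F(E)}(E^n)$ and similarly for $E^m$), one gets a commutative diagram
\begin{equation}
\begin{xy}\xymatrix{
\fF_{F(E)}(E^m)\ar[r]\ar[d]^{\cong} & \fF_{F(E)}(E^n)\ar[r]\ar[d]^{\cong} & \fF_{F(E)}(M)\ar[r]\ar[d] & 0\\
F(E^m)\ar[r] & F(E^n)\ar[r] & F(M)\ar[r] & 0
}\end{xy}
\end{equation}
with exact rows, so the induced map $\fF_{F(E)}(M)\to F(M)$ is an isomorphism. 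The standard argument (choose presentations functorially, or check independence of the choice via a double presentation) shows this is natural in $M$, yielding the required natural isomorphism $\fF_{F(E)}\cong F$. Naturality in $F$ and $\oX$ of both round-trips is routine.

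Finally, the flat case is immediate: by Definition~\ref{def:flat}, $\oX$ is flat precisely when $\fF_\oX$ is exact, and since the equivalence identifies $\oX$ with $\fF_\oX$ and preserves exactness of functors, flat $E$-modules correspond exactly to exact right-exact $\kk$-linear functors $\Rcoh\to\Cc$. The main obstacle is the bookkeeping in verifying that the $E$-action on $F(E)$ obtained via $F$ of left multiplications matches, under the isomorphism $\fF_{F(E)}\cong F$, the canonical $E$-action on $\fF_{F(E)}(E)=F(E)$; this amounts to unwinding that in the coequaliser defining $M\Ten_E\oX$ the left $E$-action on $\oX$ and the right $E$-action on $M$ play symmetric roles.
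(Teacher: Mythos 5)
Your proposal is correct and is essentially the paper's own argument: an Eilenberg--Watts style proof whose quasi-inverse is \(F\mt F(E)\), with the \(E\)-action on \(F(E)\) obtained by applying \(F\) to the right-\(E\)-module endomorphisms of \(E\) given by left multiplication. The only divergence is in establishing \(\fF_{F(E)}\cong F\): you resolve \(M\) by a chosen finite free presentation \(E^m\ra E^n\ra M\ra 0\) and must then argue naturality and independence of the choice, whereas the paper applies \(F\) to the canonical coequaliser \(M\Ten_\kk E\Ten_\kk E\rightrightarrows M\Ten_\kk E\ra M\), which is functorial in \(M\), so the isomorphism \(\fF_{F(E)}(M)=M\Ten_E F(E)\cong F(M)\) and its naturality come for free, with no choices to compare.
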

For convenience, we sketch the proof.
\begin{proof}
  Given a right-exact functor \(\fF:\Rcoh\ra\Cc\), let \(\oX=\fF(E)\).  Since 
  \(E=\Eend_E(E)\) (endomorphisms of right \(E\)-modules), \(\oX\) is a left 
  \(E\)-module.  Given any coherent right \(E\)-module \(M\), applying 
  \(\fF\) to the co-equaliser diagram
  \begin{equation}
    M\Ten_\kk E\Ten_\kk E\rightrightarrows M\Ten_\kk E\ra M\Ten_E E=M
  \end{equation}
  and using the fact that \(\fF\) is right-exact and \(\kk\)-linear, we get a 
  co-equaliser diagram
  \begin{equation}
    M\Ten_\kk E\Ten_\kk\oX\rightrightarrows M\Ten_\kk\oX\ra M\Ten_E\oX=\fF(M)
  \end{equation}
  Hence \(\fF\) is isomorphic to \(\fF_\oX\). The other claims are obvious.
\end{proof}

\point
We will require a few more results from~\Deligne{5}.  Given a left 
\(E\)-module \(M\) and an object \(\oX\) of \(\Cc\), \(M\Ten\oX\) is 
naturally an \(E\)-module in \(\Cc\). Hence, there is a functor 
\(\Ten:\Lcoh\x\Cc\ra\Lmod{E}\), \(\kk\)-linear and exact in each coordinate 
(where \(\Lcoh\) is the category of finite left \(E\)-modules).  

\Deligne{5.1} defines the tensor product of two abelian \(\kk\)-linear 
categories \(\Cc_1\) and \(\Cc_2\) to be an abelian \(\kk\)-linear category 
\(\Cc=\Cc_1\Ten_\kk\Cc_2\), together with a ``universal'' \(\kk\)-bilinear 
right-exact (in each coordinate) functor \(\Ten:\Cc_1\x\Cc_2\ra\Cc\).

\begin{prop}[\Deligne{5.11}]\label{prp:deligne1}
  The functor \(\Ten:\Lcoh\x\Cc\ra\Lmod{E}\) identifies \(\Lmod{E}\) with the 
  tensor product of \(\Lcoh\) and \(\Cc\).
\end{prop}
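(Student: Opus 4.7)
The approach is to verify directly the universal property defining the tensor product: for any abelian $\kk$-linear category $\Dd$, composition with $\Ten: \Lcoh \x \Cc \ra \Lmod{E}$ should induce an equivalence between right-exact $\kk$-linear functors $\Lmod{E} \ra \Dd$ and functors $\Lcoh \x \Cc \ra \Dd$ that are $\kk$-bilinear and right-exact in each variable separately. One direction is immediate, since $\Ten$ is itself $\kk$-bilinear and right-exact in each variable. The task is to extend a given bilinear right-exact $F: \Lcoh \x \Cc \ra \Dd$ to a right-exact $\bar F: \Lmod{E} \ra \Dd$.

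The construction proceeds in two steps. First, for each $\oY \in \Cc$, the functor $F(-, \oY): \Lcoh \ra \Dd$ is right-exact $\kk$-linear, so by the symmetric version of Proposition~\ref{prp:deligne1} (obtained by replacing $E$ with $E^{op}$), it corresponds to a right $E$-module $R(\oY) \df F(E, \oY)$ in $\Dd$, with natural isomorphism $F(M, \oY) \cong R(\oY) \Ten_E M$ in $M \in \Lcoh$. The assignment $R: \Cc \ra \Dd$ (together with its right $E$-module structure) is itself $\kk$-linear, functorial, and right-exact.

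Second, given $\oX \in \Lmod{E}$ with underlying object $\oX_0 \in \Cc$ and action $a: E \Ten \oX_0 \ra \oX_0$, I equip $R(\oX_0)$ with a compatible left $E$-action as follows: applying $R$ to $a$ produces a map $R(E \Ten \oX_0) \ra R(\oX_0)$, and the canonical isomorphism $F(E, V \Ten \oY) \cong V \Ten_\kk F(E, \oY)$ (encoding $\kk$-linearity of $F(E, -)$) converts this into a map $E \Ten R(\oX_0) \ra R(\oX_0)$. Together with the pre-existing right $E$-action, this makes $R(\oX_0)$ an $E$-$E$-bimodule object in $\Dd$. I then define $\bar F(\oX)$ as the coequaliser in $\Dd$ of the two maps $E \Ten R(\oX_0) \rightrightarrows R(\oX_0)$ given by the left action and by the right action (composed with the swap), i.e., the maximal quotient on which the two actions coincide.

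Three checks remain. (a) Compatibility $\bar F \circ \Ten \cong F$: for $\oX = M \Ten \oY$ one has $R(\oX_0) \cong M \Ten_\kk R(\oY)$ as bimodule (left action on $M$, right on $R(\oY)$), and its coequaliser is $M \Ten_E R(\oY) \cong F(M, \oY)$ by the same symmetric Proposition~\ref{prp:deligne1}. (b) Right-exactness of $\bar F$ follows from right-exactness of $R$ and stability of coequalisers under right-exact constructions. (c) Essential uniqueness: any right-exact extension must preserve the bar coequaliser $E \Ten E \Ten \oX_0 \rightrightarrows E \Ten \oX_0 \ra \oX$ in $\Lmod{E}$, whose first two terms lie in the image of $\Ten$ and so have values determined by $F$. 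The main obstacle is the careful bookkeeping of the commuting left and right $E$-actions on $R(\oX_0)$ and of the naturality of the identification $F(E, V \Ten \oY) \cong V \Ten_\kk F(E, \oY)$; once these are set up, the remaining verifications are routine diagram chases.
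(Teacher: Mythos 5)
The paper itself contains no proof of this proposition: it is imported wholesale from Deligne's \emph{Cat\'egories tannakiennes} (hence the bracketed ``5.11'' attribution), so there is no in-paper argument to compare yours against. Your proposal supplies an actual proof, and its route is the natural one: verify the universal property directly by currying, converting each right-exact \(F(-,\oY):\Lcoh\ra\Dd\) into a right \(E\)-module \(R(\oY)=F(E,\oY)\) in \(\Dd\), endowing \(R(\oX_0)\) with the commuting left action induced by the module structure of \(\oX\), and defining \(\bar{F}(\oX)\) as the coequaliser of the two actions. The compatibility \(\bar{F}(M\Ten\oY)\cong F(M,\oY)\), and the uniqueness/full-faithfulness of the extension, then follow from the bar presentation \(E\Ten E\Ten\oX_0\rightrightarrows E\Ten\oX_0\Onto\oX\), whose terms are free modules and hence in the image of \(\Ten\); the checks you defer (bimodule axioms, naturality of \(F(E,V\Ten\oY)\cong V\Ten F(E,\oY)\), right-exactness of the coequaliser construction) are indeed routine. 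What this buys over the paper's treatment is self-containedness: the argument uses only ingredients the paper actually establishes.

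There is, however, one defect you must repair: both times you invoke the module--functor correspondence, you cite it as ``the symmetric version of Proposition~\ref{prp:deligne1}'', i.e., of the very statement being proved, so as written the argument is circular. What you actually use is a different and independently available statement: the equivalence between right \(E\)-modules in \(\Dd\) and right-exact \(\kk\)-linear functors \(\Lcoh\ra\Dd\), with \(R(\oY)=F(E,\oY)\) and \(F(M,\oY)\cong R(\oY)\Ten_E M\). This is the mirror image (replace \(E\) by \(\Op{E}\), i.e., swap left and right modules) of the proposition immediately preceding this one in the paper --- the one attributed to Deligne~5.2, asserting that \(\oX\mt\fF_\oX\) is an equivalence between \(\Lmod{E}\) and right-exact \(\kk\)-linear functors \(\Rcoh\ra\Cc\) --- and that proposition the paper does prove (by exactly the bar-resolution argument you need). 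Redirect both citations there. One further point should be made explicit: the universal property quantifies over an \emph{arbitrary} abelian \(\kk\)-linear target \(\Dd\), whereas the paper states the Deligne~5.2 correspondence for categories satisfying its finiteness conditions; this is harmless, since the sketched proof uses only \(\kk\)-linearity, right-exactness, and the existence of cokernels in the target, but your write-up should say so. With these repairs, the proof is correct.
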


\begin{prop}[\Deligne{5.13}]\label{prp:deligne2}
  Let \(\Cc_1\) and \(\Cc_2\) be two abelian \(\kk\)-linear categories.
  \begin{enumerate}
    \item \(\Cc=\Cc_1\Ten_\kk\Cc_2\) exists, and is again \(\kk\)-linear
    \item The ``tensor product'' \(\Ten:\Cc_1\x\Cc_2\ra\Cc\) is exact in each 
      coordinate.
    \item If \(\kk\) is perfect, and \(\fF:\Cc_1\x\Cc_2\ra\Dd\) is exact in 
      each coordinate, then the induced functor \(\Cc\ra\Dd\) is exact as 
      well. \qedhere
  \end{enumerate}
\end{prop}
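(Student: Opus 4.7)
The plan is to bootstrap from Proposition~\ref{prp:deligne1}, which identifies $\Lcoh[E]\Ten_\kk\Cc$ with $\Lmod{E}$ for a finite-dimensional $\kk$-algebra $E$, and then to obtain the general statement by a filtered $2$-colimit argument.

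The first step is to represent each $\Cc_i$ as a filtered $2$-colimit of full abelian sub-categories of the form $\Lcoh[E_i^\alpha]$ with $E_i^\alpha$ a finite-dimensional $\kk$-algebra. This relies on the standing finiteness axioms (finite length, finite-dimensional Homs): to each object $\oX_\alpha$ one associates the endomorphism algebra of a projective cover (taken in the pro-completion), producing a cofinal system of finite-dimensional algebras with fully faithful exact transitions whose $2$-colimit recovers $\Cc_i$.

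For the finite case I would define $\Lcoh[E_1]\Ten_\kk\Lcoh[E_2]\df\Lcoh[E_1\Ten_\kk E_2]$, with tensor functor $(M,N)\mt M\Ten_\kk N$. The universal property follows by applying Proposition~\ref{prp:deligne1} in two stages: a bilinear right-exact $\fF:\Lcoh[E_1]\x\Lcoh[E_2]\ra\Dd$ corresponds, after fixing the first slot, to a right-exact functor $\Lcoh[E_2]\ra\Lmod[\Dd]{E_1}$, which further factors through an $(E_1\Ten_\kk E_2)$-module in $\Dd$. Bi-exactness of the universal tensor functor (part (b)) is immediate, since $\Ten_\kk$ is exact on finite-dimensional vector spaces. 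The general case follows by setting
\begin{equation}
\Cc_1\Ten_\kk\Cc_2\df\CoLim[\alpha,\beta]{\bigl(\Lcoh[E_1^\alpha]\Ten_\kk\Lcoh[E_2^\beta]\bigr)},
\end{equation}
and observing that both the universal property and bi-exactness are preserved by filtered $2$-colimits of abelian $\kk$-linear categories.

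The main obstacle is (c). The universal factorisation $\ti{\fF}:\Cc_1\Ten_\kk\Cc_2\ra\Dd$ of a bi-exact $\fF$ is right-exact by construction, and one must upgrade this to left-exactness. Reducing to the finite case via the colimit, the question becomes whether an $\fF:\Lcoh[E_1]\x\Lcoh[E_2]\ra\Dd$ exact in each variable induces a left-exact $\ti{\fF}:\Lcoh[E_1\Ten_\kk E_2]\ra\Dd$. The plan is to resolve an arbitrary finite $(E_1\Ten_\kk E_2)$-module $L$ by tensor-product modules $M\Ten_\kk N$ via a K\"unneth-style bar construction, express $\ti{\fF}(L)$ through this resolution, and deduce left-exactness from vanishing of the relevant $\mathrm{Tor}$-terms. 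The perfectness hypothesis on $\kk$ is needed exactly here: over a perfect field, $E_1\Ten_\kk E_2$ remains a finite-dimensional $\kk$-algebra of controlled homological behaviour and the K\"unneth spectral sequence degenerates appropriately, whereas in the non-perfect case new nilpotents in $E_1\Ten_\kk E_2$ can destroy the argument.
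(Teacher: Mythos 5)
The paper does not actually prove this proposition: it is quoted from \Deligne{5.13}, and the only proof-related content in the paper is Remark~\ref{rmk:nonperf}, which records the single step of Deligne's argument where the hypothesis on \(\kk\) matters. So your proposal must be measured against Deligne's proof. For parts (1) and (2) your route is essentially his: each \(\Cc_i\) is a filtered union of subcategories of subquotients of powers of a single object, each such subcategory is equivalent to \(\Lcoh[E]\) for a finite \(\kk\)-algebra \(E\), the finite case is handled by \(\Lcoh[E_1]\Ten_\kk\Lcoh[E_2]=\Lcoh[E_1\Ten_\kk E_2]\) (your two-stage use of Proposition~\ref{prp:deligne1} amounts to its special case \(\Cc=\Lcoh[E_2]\)), and one passes to the filtered \(2\)-colimit. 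That part is fine.

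The genuine gap is in part (3), precisely the step you left as a ``plan''. Your K\"unneth/bar-resolution mechanism does work, but it works over \emph{every} field: if \(P_\bullet\ra S_1\) and \(Q_\bullet\ra S_2\) are projective resolutions over \(E_1\) and \(E_2\), then \(P_\bullet\Ten_\kk Q_\bullet\) is a projective resolution of \(S_1\Ten_\kk S_2\) over \(E_1\Ten_\kk E_2\) (one tensors complexes of vector spaces over a field, so K\"unneth degenerates unconditionally), and bi-exactness of \(\fF\) then yields \(\mathrm{Tor}^{E_1\Ten_\kk E_2}_i(S_1\Ten_\kk S_2,\oX)=0\) for \(i>0\), where \(\oX=\fF(E_1,E_2)\). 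What this does \emph{not} give is flatness of \(\oX\): over the Artinian algebra \(E_1\Ten_\kk E_2\) one needs \(\mathrm{Tor}_1(L,\oX)=0\) for every \emph{simple} module \(L\), and a general simple \(L\) is merely a subquotient of some \(S_1\Ten_\kk S_2\), not a direct summand. The missing key lemma is \Deligne{\S~5.9}, the one the paper isolates in Remark~\ref{rmk:nonperf}: if \(\kk\) is perfect (more generally, in the quasi-separable situation), then \(S_1\Ten_\kk S_2\) is a \emph{semi-simple} \(E_1\Ten_\kk E_2\)-module; only then is every simple a direct summand of such a module, and only then does your Tor-vanishing transfer to all simples. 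Consequently your diagnosis of where perfectness enters (``the K\"unneth spectral sequence degenerates'', ``controlled homological behaviour'') is a misattribution: perfectness is irrelevant to the homological step, and is exactly what makes tensor products of simple modules semi-simple (via separability of the centres of the division algebras \(\Eend(S_i)\)). Deligne's counterexample (\Deligne{\S~5.6}) shows the failure concretely: for \(\kk'/\kk\) purely inseparable and \(E_1=E_2=S_1=S_2=\kk'\), the algebra \(R=\kk'\Ten_\kk\kk'\) is local and non-reduced, \(S_1\Ten_\kk S_2=R\) is free (so your Tor-vanishing holds vacuously), yet its simple quotient \(L=\kk'\) has \(\mathrm{Tor}^R_1(L,\oX)\neq 0\) for the bi-exact functor \((V,W)\mt V\Ten_{\kk'}W\), whose induced functor on \(\Lcoh[R]\) is therefore not exact.
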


\begin{remark}\label{rmk:nonperf}
  The assumption that \(\kk\) is perfect in the last part above of 
  Proposition~\ref{prp:deligne2} is too strong for our purposes, since it 
  excludes some interesting examples. The fact that the statement is false in 
  general is demonstrated in~\Deligne{\S~5.6}. However, the statement remains 
  true in the following situation: We call a \(\kk\)-algebra \(E\) 
  \Def{quasi-separable} over \(\kk\) if it is commutative, finite over 
  \(\kk\), and the associated reduced algebra is separable over \(\kk\). If 
  \(E\) is such an algebra, then the last part of 
  Proposition~\ref{prp:deligne2} holds when \(\Cc_1=\Lcoh\) (and \(\Cc_2\) is 
  any), without restriction on \(\kk\).

  The proof of this fact is the same as for the original statement. One 
  reduces, as in~\Deligne{\S~5}, to the following statement 
  (\Deligne{\S~5.9}): If \(E\) is quasi-separable, and \(A\) is any finite 
  (not necessarily commutative) algebra over \(\kk\), \(S\) and \(T\) are 
  simple modules over \(E\) and \(A\), respectively, then \(S\Ten_\kk{}T\) is 
  a semi-simple \(E\Ten_\kk{}A\)-module. To prove this, we note that since 
  \(S\) is simple, any nilpotent element of \(E\) acts as \(0\), so we may 
  assume that \(E\) is reduced, and hence separable. Now the proof proceeds 
  as in~\Deligne{5.9}.
\end{remark}

\subsection{Tensor structure}\label{ssc:tensor}
We now assume that \(\kk\) is a field, \(E\) is a quasi-separable 
\(\kk\)-algebra (Remark~\ref{rmk:nonperf}), and \(\Cc\) is abelian and 
\(\kk\)-linear.  We also assume  that we are given a monoidal structure 
\((\Ten,\phi,\psi)\) on \(\Cc\) (so that \(\Ten\) is \(\kk\)-linear in each 
coordinate, and has a unit \(\1\), \(\phi\) and \(\psi\) are, respectively, 
associativity and commutativity constraints, but \(\Cc\) is not necessarily 
rigid).  We assume \(\Ten\) to be exact in each coordinate (this is automatic 
if \(\Cc\) is rigid).  We would like to define a monoidal structure on 
\(\Lmod{E}\).  It will be convenient to define and work with two dual such 
structures.

We fix a unit \(\1\) in \((\Cc,\Ten)\). The functor \(V\mt{}V\Ten_\kk\1\) has 
a natural tensor structure, making it a fully faithful exact tensor embedding 
of \(\Vec_\kk\) into \(\Cc\). We will therefore view \(\Vec_\kk\) as a 
subcategory of \(\Cc\). The meaning of all notions we have defined (and will 
define) for both vector spaces and objects of \(\Cc\) is easily seen to be 
compatible with this identification. For example, we have 
\(V\Ten_\kk\oX=V\Ten\oX\) and \(\Hom_\kk(V,\oX)=\Hom(V,\oX)\) (in particular, 
the latter exists), so we drop the decoration \(\kk\) from now on.

\point\label{pt:base}
Given two \(E\)-modules \(\oX\) and \(\oY\) in \(\Cc\), their usual tensor 
product \(\oX\Ten_E\oY\) is defined as the largest quotient of \(\oX\Ten\oY\) 
on which the two actions of \(E\) agree (cf.~\DM{3}). In other words, it is 
the co-equaliser
\begin{equation}\label{eq:tene}
  E\Ten\oX\Ten\oY\rightrightarrows\oX\Ten\oY\ra\oX\Ten_E\oY
\end{equation}
The \(E\)-module structure is induced, as usual, by the action on either 
coordinate.

The dual tensor product \(\oX\Ten^E\oY\) is defined as 
\(\Op{(\Op{\oX}\Ten_E\Op{\oY})}\), where \(\Op{\oX}\) is \(\oX\) viewed as an 
object of the opposite category \(\Op{\Cc}\) (since \(E\) is commutative, 
\(\Lmod[\Op{\Cc}]{E}=\Op{(\Lmod{E})}\)). In other words, it is the largest 
sub-object of \(\oX\Ten\oY\) annihilated by all maps \(e\Ten{}1-1\Ten{}e\) 
with \(e\in{}E\) (this exists since \(E\) is finite). Again, the \(E\)-module 
structure comes from the action on either coordinate.

Since the associativity and commutativity constraints are functorial, they 
commute with the action of \(E\), and therefore induce similar constraints  
\(\phi_E\), \(\psi_E\), \(\phi^E\) and \(\psi^E\) on the respective tensor 
structures. We set \(\Cc_E=(\Lmod{E},\Ten_E,\phi_E,\psi_E)\) and 
\(\Cc^E=(\Lmod{E},\Ten^E,\phi^E,\psi^E)\).

\begin{lemma}\label{lem:dual}
  The inclusion of \(\oX\Ten^E\oY\) in \(\oX\Ten\oY\) is the equaliser of the 
  two maps \(\oX\Ten\oY\ra\Hom(E,\oX\Ten\oY)\).
\end{lemma}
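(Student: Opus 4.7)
The plan is to proceed by straightforward duality, interpreting $\oX\Ten^E\oY$ via the coequaliser definition of $\Op{\oX}\Ten_E\Op{\oY}$ in $\Op{\Cc}$ (as in~\ref{pt:base}) and transferring back to $\Cc$.

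First, I would write down the defining coequaliser in $\Op{\Cc}$:
\begin{equation}
  E\Ten_\kk\Op{\oX}\Ten\Op{\oY}\rightrightarrows\Op{\oX}\Ten\Op{\oY}\ra\Op{\oX}\Ten_E\Op{\oY},
\end{equation}
where the two parallel maps come from the $E$-module structures on $\Op{\oX}$ and $\Op{\oY}$ in $\Op{\Cc}$ (which correspond, by~\ref{pt:eten}, to the $E$-comodule structures on $\oX$ and $\oY$ in $\Cc$). Since the bifunctor $\Ten$ on $\Cc$ is the same as on $\Op{\Cc}$, we have $\Op{\oX}\Ten\Op{\oY}=\Op{(\oX\Ten\oY)}$.

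Next, I would apply Proposition~\ref{pro:ladj}(4) with $V=E$ and $\oZ=\oX\Ten\oY$, which identifies
\begin{equation}
  \Op{(E\Ten_\kk(\oX\Ten\oY))} = \Hom(E,\Op{(\oX\Ten\oY)})
\end{equation}
in $\Op{\Cc}$. Taking opposites of the coequaliser diagram above therefore yields an equaliser in $\Cc$:
\begin{equation}
  \oX\Ten^E\oY\ra\oX\Ten\oY\rightrightarrows\Hom(E,\oX\Ten\oY).
\end{equation}

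It remains to check that the two parallel maps on the right are exactly the two maps obtained by adjunction from the two $E$-actions on $\oX\Ten\oY$ (via the first and the second factor respectively). This is a matter of chasing the adjunction between $E\Ten_\kk-$ and $\Hom(E,-)$ from Proposition~\ref{pro:ladj}(2): the action map $E\Ten\Op{\oX}\ra\Op{\oX}$ in $\Op{\Cc}$ corresponds, via this adjunction, to the comodule-type map $\oX\ra\Hom(E,\oX)$ in $\Cc$, tensored with $\oY$; likewise for the action on $\oY$. The only (minor) obstacle is to keep these adjunctions straight and verify that the difference of these two parallel maps corresponds, under the adjunction, to the map $E\Ten\oX\Ten\oY\ra\oX\Ten\oY$ sending $e\Ten x\Ten y$ to $ex\Ten y - x\Ten ey$, so that its equaliser is precisely the largest sub-object annihilated by all $e\Ten 1-1\Ten e$; this matches the description of $\oX\Ten^E\oY$ in~\ref{pt:base}.
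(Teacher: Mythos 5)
Your proposal is correct and is essentially identical to the paper's proof, which reads in full: ``This follows from dualising the diagram~\eqref{eq:tene}, using Proposition~\ref{pro:ladj}.'' You have simply spelled out the details of that one-line argument --- passing to the coequaliser defining \(\Op{\oX}\Ten_E\Op{\oY}\) in \(\Op{\Cc}\) and using Proposition~\ref{pro:ladj}(4) to exchange \(E\Ten_\kk-\) with \(\Hom(E,-)\) under the opposite-category duality.
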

\begin{proof}
  This follows from dualising the diagram~\eqref{eq:tene}, using 
  Proposition~\ref{pro:ladj}.
\end{proof}

The following proposition lists the basic properties of these operations.
\begin{prop}\label{prp:modten}
  Let \((\Cc,\Ten)\) and \(E\) be as in~\rpt{pt:base}.
  \begin{enumerate}
    \item \(\Cc_E\) and \(\Cc^E\) are monoidal categories
    \item If \(\Cc\) is closed, then so is \(\Cc_E\).
    \item If \(\Cc\) is rigid, then
      \begin{equation}\label{eq:dual}
        \Co{(\Co{\oX}\Ten_E\Co{\oY})}=
        \oX\Ten^E\oY=
        \Hom_E(\Co{\oX},\oY)
      \end{equation}
  \end{enumerate}
  Hence, if \(\Cc\) is rigid, \(\oX\mt\Co{\oX}\) induced a monoidal 
  equivalence \(\Cc^E\ra\Op{\Cc_E}\).
\end{prop}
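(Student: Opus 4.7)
The plan is to derive every statement from the monoidal structure on $\Cc$ together with the universal-property descriptions of $\Ten_E$ (coequaliser, by~\rpt{pt:base}) and $\Ten^E$ (equaliser, by Lemma~\ref{lem:dual}). Since $\Cc^E$ is obtained by performing the $\Cc_E$ construction in $\Op{\Cc}$, every claim about $\Cc^E$ follows formally once its $\Cc_E$ counterpart is in place; I therefore focus on $\Cc_E$. The main obstacle I expect is the bookkeeping of Part~(1): verifying that the constraints $\phi,\psi$ of $\Cc$ descend to well-defined constraints on $\Cc_E$ and that the coherence diagrams survive.

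For Part~(1), take $E$ with its left-regular action as the unit of $\Cc_E$; the coequaliser defining $E\Ten_E\oX$ collapses to yield $E\Ten_E\oX\simeq\oX$, and symmetrically on the right, supplying the unit constraints. For associativity, observe that $\phi_{\oX,\oY,\oZ}$, being natural in each of $\oX,\oY,\oZ$, is $E$-equivariant for the action on each coordinate, and hence intertwines the parallel pairs whose coequalisers compute $(\oX\Ten_E\oY)\Ten_E\oZ$ and $\oX\Ten_E(\oY\Ten_E\oZ)$; it therefore descends to $\phi_E$. The same reasoning yields $\psi_E$, and Mac Lane's pentagon and hexagon propagate face-by-face through the coequalisers. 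For Part~(2), assume $\Cc$ closed with internal $\Hom$. For $E$-modules $\oY,\oZ$, the object $\Hom(\oY,\oZ)$ carries two canonical $E$-actions---from $\oZ$ by post-composition and from $\oY$ by pre-composition; define $\Hom_E(\oY,\oZ)$ as their equaliser in $\Cc$. A map $\oX\Ten_E\oY\ra\oZ$ in $\Cc_E$ is, by the coequaliser definition, an $E$-bilinear map $\oX\Ten\oY\ra\oZ$ equivariant for the outer $E$-actions, which by the $(\Ten,\Hom)$-adjunction in $\Cc$ corresponds to an $E$-linear map $\oX\ra\Hom_E(\oY,\oZ)$. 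Hence $\Hom_E(\oY,-)$ is right adjoint to $-\Ten_E\oY$, exhibiting $\Cc_E$ as closed.

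For Part~(3), rigidity in $\Cc$ yields the natural iso $\Co{(\oX\Ten\oY)}\simeq\Co{\oX}\Ten\Co{\oY}$, which identifies the coequaliser defining $\Co{\oX}\Ten_E\Co{\oY}$ with the dual of the equaliser of Lemma~\ref{lem:dual} defining $\oX\Ten^E\oY$; dualising gives the first equality of~\eqref{eq:dual}. For the second equality, use the rigid identification $\oX\Ten\oY\simeq\Hom(\Co{\oX},\oY)$ and observe that under it the two $E$-actions on $\oX\Ten\oY$ correspond to post-composition by the action on $\oY$ and pre-composition by the transpose action on $\Co{\oX}$; their equaliser is therefore $\Hom_E(\Co{\oX},\oY)$ in the sense of Part~(2). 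The concluding monoidal equivalence $\oX\mt\Co{\oX}:\Cc^E\Isom\Op{\Cc_E}$ is then immediate: duality is an equivalence $\Cc\Isom\Op{\Cc}$ that respects the $E$-module structure via the dual $E$-action, and the first equality of~\eqref{eq:dual} expresses precisely that it intertwines $\Ten^E$ with $\Ten_E$ in $\Op{\Cc}$.
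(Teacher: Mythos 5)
Your proposal is correct and follows essentially the same route as the paper's proof: part (1) by descending the constraints of \(\Cc\) along the (co)equalisers via naturality, with \(E\) (dually \(\Co{E}\)) as unit; part (2) by defining \(\Hom_E\) as the equaliser of the two \(E\)-actions on \(\Hom\) and transporting the \((\Ten,\Hom)\)-adjunction through the coequaliser; part (3) by dualising the defining diagrams under the equivalence \(\oX\mt\Co{\oX}:\Cc\Isom\Op{\Cc}\) together with Lemma~\ref{lem:dual}. The only difference is that you spell out details (unit isomorphism, coherence, the identification of the two \(E\)-actions under \(\oX\Ten\oY\simeq\Hom(\Co{\oX},\oY)\)) that the paper leaves implicit or delegates to~\rpt{pt:base}.
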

\begin{proof}
  \begin{enumerate}
    \item This was discussed in~\rpt{pt:base}. The only additional point is 
      that \(E\) is a unit for \(\Cc_E\), and dually, \(\Co{E}\) is a unit in 
      \(\Cc^E\).
    \item Given two \(E\)-modules \(\oX\) and \(\oY\) in \(\Cc\), \(E\) acts 
      on \(\Hom(\oX,\oY)\) in two ways. Let \(\Hom_E(\oX,\oY)\) be the 
      equaliser of the two actions, with \(E\) structure coming from either.  

      A map \(f:\oZ\ra\Hom_E(\oX,\oY)\) determines a map 
      \(\oZ\ra\Hom(\oX,\oY)\), and therefore a map \(g:\oZ\Ten\oX\ra\oY\). If 
      \(\oZ\) is an \(E\)-module, and \(f\) commutes with the action of 
      \(E\), then the two compositions 
      \(E\Ten\oZ\Ten\oX\ra\oZ\Ten\oX\ra[g]\oY\) are equal, so \(g\) descends 
      to a map \(\bar{g}:\oZ\Ten_E\oX\ra\oY\). Furthermore, since \(f\) 
      factors through \(\Hom_E(\oX,\oY)\), \(\bar{g}\) is a map of 
      \(E\)-modules. The argument in the other direction is similar.

    \item
      The first equality follows from the fact that \(\oX\mt\Co{\oX}\) is an  
      exact tensor equivalence of \(\Cc\) with \(\Op{\Cc}\), taking 
      \(E\Ten\oX\) to \(\Hom(E,\Co{\oX})\) (and using Lemma~\ref{lem:dual}).

      The second equality follows from Lemma~\ref{lem:dual} and the 
      construction of \(\Hom_E(\Co{\oX},\oY)\) as an equaliser (together with 
      the isomorphism \(\Co{\oX}\Ten\oY=\Hom(\oX,\oY)\) in any rigid 
      category).
  \end{enumerate}
\end{proof}

\begin{remark}
  The equivalence mentioned in the Proposition does not imply that \(\Cc^E\) 
  is closed (which is generally false), since the notion of a closed category 
  is not self-dual (However, see~\ref{cor:rigid}).
\end{remark}

\point[Flatness]
If \(\oX\) is an \(E\)-module in \(\Cc\), the functor \(\oY\mt\oY\Ten_E\oX\) 
is always right exact (since it is a co-equaliser). We would like to consider 
those modules for which the functor is exact. Since the usual \(E\)-modules 
are included in \(\Lmod{E}\), each such module is flat in the sense 
of~\ref{def:flat}.  It follows from Deligne's result that flatness is 
sufficient for the exactness of this functor in general.

\begin{prop}\label{prp:flatness}
  For any flat \(E\)-module \(\oX\), the functor \(-\Ten_E\oX\) (from 
  \(\Cc_E\) to itself) is exact.
  
  If \(\Cc\) is rigid, then this is also equivalent to the exactness of 
  \(\oY\mt\Hom_E(\oY,\Co{\oX})\).
\end{prop}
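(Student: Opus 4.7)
The plan is to reduce the first statement to the exactness criterion for functors out of a tensor product of $\kk$-linear categories (Propositions~\ref{prp:deligne1} and~\ref{prp:deligne2}, with the extension in Remark~\ref{rmk:nonperf}), and then to obtain the second statement by dualising.

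For the first part, I would use Proposition~\ref{prp:deligne1} to identify $\Cc_E=\Lmod{E}$ with $\Lcoh\Ten_\kk\Cc$. Under this identification, the functor $\oY\mt\oY\Ten_E\oX$ corresponds to the bilinear functor $(M,\oZ)\mt(M\Ten\oZ)\Ten_E\oX$ from $\Lcoh\x\Cc$ into $\Cc_E$. Using exactness of $\Ten$ in $\Cc$ and the commutativity constraint to manipulate the coequaliser defining $\Ten_E$, this is naturally isomorphic to $(M\Ten_E\oX)\Ten\oZ$, viewed in $\Cc_E$ via the $E$-action on the first factor. Exactness in $M$ then follows by factoring: $M\mt M\Ten_E\oX$ is exact by the flatness hypothesis (Definition~\ref{def:flat}), and $-\Ten\oZ$ is exact in $\Cc$. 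Exactness in $\oZ$ is immediate from the exactness of $\Ten$ in $\Cc$. Since $E$ is quasi-separable, Proposition~\ref{prp:deligne2}(3), as extended in Remark~\ref{rmk:nonperf}, applies and delivers the exactness of the induced functor $\oY\mt\oY\Ten_E\oX$ on all of $\Cc_E$.

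For the second part, when $\Cc$ is rigid I would read~\eqref{eq:dual} with $\oY$ and $\oX$ substituted for the dual pair $(\Co{\oX},\Co{\oY})$ (allowed because of rigidity, since $\Co{\Co{(-)}}=\mathrm{id}$), yielding a natural isomorphism $\Co{(\oY\Ten_E\oX)}\simeq\Hom_E(\oY,\Co{\oX})$ in $\Cc_E$; commutativity of $E$ ensures that the duality on $\Cc$ restricts to an exact contravariant self-equivalence on $\Cc_E$. Composing $\oY\mt\oY\Ten_E\oX$ with this duality therefore gives precisely the contravariant functor $\oY\mt\Hom_E(\oY,\Co{\oX})$, and exactness of either functor is equivalent to exactness of the other.

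I do not foresee any serious obstacle. The only non-trivial ingredient is the quasi-separable refinement of Proposition~\ref{prp:deligne2}(3), which is already handled in Remark~\ref{rmk:nonperf} under our standing hypothesis on $E$. The coequaliser identification $(M\Ten\oZ)\Ten_E\oX\simeq(M\Ten_E\oX)\Ten\oZ$ and the passage to duals are both formal manipulations.
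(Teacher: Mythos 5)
Your proposal is correct and follows essentially the same route as the paper's own proof: identify \(\Lmod{E}\) with \(\Lcoh\Ten_\kk\Cc\) via Proposition~\ref{prp:deligne1}, check that the induced functor on \(\Lcoh\x\Cc\), namely \((M,\oZ)\mt(M\Ten_E\oX)\Ten\oZ\), is exact in each coordinate (using flatness of \(\oX\) and exactness of \(\Ten\)), invoke the quasi-separable extension of Deligne's exactness criterion from Remark~\ref{rmk:nonperf}, and deduce the second part by dualising via equation~\eqref{eq:dual}. The only cosmetic difference is that you keep the target as \(\Cc_E\) and spell out the coequaliser identification explicitly, whereas the paper first reduces to exactness of the functor into \(\Cc\); this changes nothing of substance.
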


The last part is an analogue of Proposition~\ref{prp:injflat}.

\begin{proof}
  It is enough to prove that \(-\Ten_E\oX\) is exact as a functor from 
  \(\Lmod{E}\) to \(\Cc\). According to Proposition~\ref{prp:deligne1}, 
  \(\Lmod{E}\) can be identified with \(\Lcoh\Ten_\kk\Cc\). Since \(E\) is 
  quasi-separable, it is enough, by Remark~\ref{rmk:nonperf}, to prove that 
  the induced functor \(\Lcoh\x\Cc\ra\Cc\) is exact in each coordinate.  This 
  induced functor is given by \((M,\oY)\mt(M\Ten_E\oX)\Ten\oY\), so precisely 
  equivalent to the flatness of \(\oX\) (recall that \(\Ten\) was assumed to 
  be exact).

  The second statement follows from the first together with 
  equation~\eqref{eq:dual}.
\end{proof}

From now on, \emph{we assume that \(\Cc\) is rigid}.

\begin{cor}\label{cor:flatness}
  The full sub-category \(\Flat{E}\) of \(\Cc_E\) consisting of flat modules 
  is a tensor sub-category (which need not be abelian). So is the full 
  sub-category of \(\Cc^E\) consisting of objects \(\oX\) for which 
  \(\Co{\oX}\) is flat.
\end{cor}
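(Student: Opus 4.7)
The plan is to verify that $\Flat{E}$ is closed under $\Ten_E$ and contains the unit of $\Cc_E$, and then to deduce the statement for $\Cc^E$ by duality. Closure under $\Ten_E$ is the only nontrivial point.

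Given flat $E$-modules $\oX$ and $\oY$, my goal is to show that the functor $M \mapsto M \Ten_E (\oX \Ten_E \oY)$ from $\Rcoh$ to $\Cc$ is exact. The key step would be to invoke the associativity constraint from Proposition~\ref{prp:modten}(1), yielding a natural isomorphism
\[M \Ten_E (\oX \Ten_E \oY) \cong (M \Ten_E \oX) \Ten_E \oY,\]
which exhibits the functor as a composition through $\Cc_E$. The first factor $M \mapsto M \Ten_E \oX$, from $\Rcoh$ to $\Cc_E$, is exact by flatness of $\oX$ together with Proposition~\ref{prp:flatness}, which is precisely what upgrades the original hypothesis (defined via $\Rcoh \to \Cc$) to exactness landing in the richer category $\Cc_E$. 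The second factor $-\Ten_E \oY$, from $\Cc_E$ to $\Cc_E$, is then exact by the same proposition applied to $\oY$. Composing and forgetting back to $\Cc$ delivers the desired exactness, so $\oX \Ten_E \oY$ is flat. The unit $E$ of $\Cc_E$ (Proposition~\ref{prp:modten}(1)) is flat because $M \Ten_E E \cong M$, so the associated functor is the identity.

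For the second part, I would simply transport the result across the monoidal equivalence $\oX \mapsto \Co{\oX} : \Cc^E \to \Op{\Cc_E}$ supplied by Proposition~\ref{prp:modten}. Under this equivalence, the sub-category in question corresponds to $\Op{\Flat{E}}$, and $\oX \Ten^E \oY$ corresponds to $\Co{\oX} \Ten_E \Co{\oY}$, which is flat by the previous paragraph. The unit $\Co{E}$ of $\Cc^E$ has dual $E$, which was handled above.

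The main obstacle is the bookkeeping around the two distinct flatness statements---Definition~\ref{def:flat} with source $\Rcoh$ versus Proposition~\ref{prp:flatness} with source $\Cc_E$---and checking that they mesh properly in the composition. The composition genuinely requires the second factor to be exact on all of $\Cc_E$, not merely on the image of $\Rcoh$, and this is precisely where the tensor product description of $\Lmod{E}$ (Proposition~\ref{prp:deligne1}) combined with the quasi-separability of $E$ (Remark~\ref{rmk:nonperf}) does its work. Once Proposition~\ref{prp:flatness} is invoked, no further conceptual input is needed beyond the associativity of $\Ten_E$ and the identification of the unit.
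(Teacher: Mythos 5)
Your proof is correct and takes essentially the same route as the paper: the paper likewise reduces to showing \(\oX\Ten_E\oY\) is flat, rewrites \(M\Ten_E(\oX\Ten_E\oY)\) as \((M\Ten_E\oX)\Ten_E\oY\) using associativity in \(\Cc_E\), and invokes Proposition~\ref{prp:flatness} (for \(\oY\)) to exhibit a composition of exact functors, with the \(\Cc^E\) statement obtained by the duality of Proposition~\ref{prp:modten}. The only cosmetic difference is that you also cite Proposition~\ref{prp:flatness} for the first factor \(M\mt M\Ten_E\oX\), where the definition of flatness already suffices, since exactness of a sequence in \(\Lmod{E}\) is detected after forgetting to \(\Cc\).
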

\begin{proof}
  We need only to prove that if \(\oX\) and \(\oY\) are flat, then so is 
  \(\oX\Ten_E\oY\). Hence we need to prove that the functor 
  \(M\mt{}M\Ten_E(\oX\Ten_E\oY)\) is exact. Since \(\Cc_E\) is a tensor 
  category, the last object is equal to \((M\Ten_E\oX)\Ten_E\oY\), so this is 
  a composition of two exact functors (using Proposition~\ref{prp:flatness} 
  for \(\oY\)).
\end{proof}

\begin{defn}\label{def:prolongf}
  Let \(\kk\) be a field, \(\Cc\) a rigid abelian \(\kk\)-linear tensor 
  category, and \(E\) a quasi-separable \(\kk\)-algebra 
  (Remark~\ref{rmk:nonperf}). An object \(\oX\) of \(\Lmod{E}\) will be 
  called \Def{\(E\)-injective} if \(\Co{\oX}\) is \(E\)-flat.
  
  The \Def{\(E\)-prolongation} of \(\Cc\), \(\Plng[\Cc]{E}\), is defined to 
  be the full tensor sub-category of \(\Cc^E\) consisting of \(E\)-injective 
  modules.
\end{defn}

\begin{remark}
  If \(\oX\) is \(E\)-injective, it follows that the functor 
  \(M\mt\Hom_E(M,\oX)\) (from \(\Lcoh\) to \(\Cc\)) is exact. The converse is 
  also true, using the same argument as in the proof of 
  Proposition~\ref{prp:flatness}. Hence, the notion of \(E\)-injective 
  objects can also be defined on the level of abelian categories, without 
  mentioning the tensor structure. Also, as with flatness, it is enough to 
  check the exactness on inclusions of an ideal of \(E\) in \(E\). On the 
  other hand, being \(E\)-injective is not the same as being an injective 
  object in \(\Lmod{E}\).

  Similarly, it follows from equation~\eqref{eq:dual} that an object \(\oX\) 
  is \(E\)-flat if and only if it is \(E\)-projective, in the sense that 
  \(\Hom_E(\oX,-)\) is exact, either on \(\Cc_E\) or on \(\Lcoh\) (and this 
  is again different from being projective in \(\Lmod{E}\)).
\end{remark}

\begin{cor}\label{cor:flatop}
  The tensor equivalence \(\oX\mt\Co{\oX}\) from Proposition~\ref{prp:modten} 
  induces a tensor equivalence \(\Plng{E}\ra\Op{\Flat{E}}\).
\end{cor}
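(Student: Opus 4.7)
The plan is to deduce the corollary as essentially a formal consequence of Proposition~\ref{prp:modten} and Corollary~\ref{cor:flatness}. Proposition~\ref{prp:modten}(3) already produces a monoidal equivalence $F\colon \Cc^E \ra \Op{\Cc_E}$, $\oX \mt \Co{\oX}$, and the task is to show that this equivalence restricts to the full subcategories $\Plng{E} \subseteq \Cc^E$ and $\Op{\Flat{E}} \subseteq \Op{\Cc_E}$, and that the restriction remains a tensor equivalence.

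First I would check that the restriction is an equivalence of plain categories. By Definition~\ref{def:prolongf}, an object $\oX$ of $\Cc^E$ lies in $\Plng{E}$ precisely when $\Co{\oX}$ is $E$-flat, which is by definition the condition $F(\oX) \in \Flat{E}$, i.e., $F(\oX) \in \Op{\Flat{E}}$. Since $\Cc$ is rigid, $F$ is its own quasi-inverse up to the canonical isomorphism $\Co{\Co{\oX}} \cong \oX$ (which is compatible with the $E$-action), so the same argument in reverse shows that $\oY \in \Op{\Flat{E}}$ forces $\Co{\oY} \in \Plng{E}$. Since $\Plng{E}$ and $\Op{\Flat{E}}$ are full subcategories of equivalent categories, fullness and faithfulness transfer automatically from $F$ to its restriction, giving an equivalence.

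Finally I would upgrade this to a tensor equivalence by checking that both subcategories are stable under their respective tensor products and contain the units. Stability of $\Flat{E}$ under $\Ten_E$ is exactly Corollary~\ref{cor:flatness}, and the unit $E$ of $\Cc_E$ is trivially flat. For $\Plng{E}$, the monoidality of $F$ from Proposition~\ref{prp:modten}(3) gives the natural isomorphism $F(\oX \Ten^E \oY) \cong F(\oX) \Ten_E F(\oY)$; if $\oX, \oY \in \Plng{E}$ then $F(\oX)$ and $F(\oY)$ are flat, hence so is their $\Ten_E$-product by Corollary~\ref{cor:flatness}, hence $\oX \Ten^E \oY \in \Plng{E}$. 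The unit $\Co{E}$ of $\Cc^E$ corresponds under $F$ to $E$, which lies in $\Flat{E}$, so $\Co{E} \in \Plng{E}$.

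There is no deep obstacle: the content of the statement is entirely extracted from the preceding propositions. The only item requiring slight attention is that $F$ must be shown to send units to units and tensor products to tensor products within the restricted subcategories, but both points reduce to equation~\eqref{eq:dual} and to Corollary~\ref{cor:flatness}, and no new verification is needed.
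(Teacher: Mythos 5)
Your proof is correct and follows exactly the route the paper intends: the paper states this corollary without proof precisely because, as you observe, it is immediate from the definition of $E$-injective objects (those with $E$-flat dual), the equivalence $\Cc^E\ra\Op{\Cc_E}$ of Proposition~\ref{prp:modten}, and the closure of $\Flat{E}$ under $\Ten_E$ from Corollary~\ref{cor:flatness}. Your careful unwinding (restriction of the equivalence to the full subcategories, the double-dual argument for essential surjectivity onto $\Op{\Flat{E}}$, and the matching of units $\Co{E}\leftrightarrow E$) is exactly the implicit content, with no deviation.
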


\begin{example}\label{ex:difftan}
  Let \(E=\kk[\eps]\). In Example~\ref{ex:diff}, we have already identified 
  the flat \(E\)-modules in \(\Cc\) with exact sequences 
  \(0\ra\oX\ra[i]\oY\ra[\pi]\oX\ra{}0\) in \(\Cc\). Since the dual module 
  corresponds to the dual exact sequence, an \(E\)-module is \(E\)-flat if 
  and only if it is injective. Thus, this is also the category of injective 
  \(E\)-modules.

  Let \(0\ra\oX_1\ra[i]\oY_1\ra[\pi]\oX_1\ra{}0\) and 
  \(0\ra\oX_2\ra[i]\oY_2\ra[\pi]\oX_2\ra{}0\) be two exact sequences. The 
  inclusions induce inclusions 
  \(0\ra\oX_1\Ten\oY_2\ra[i\Ten{}1]\oY_1\Ten\oY_2\) and 
  \(0\ra\oY_1\Ten\oX_2\ra[1\Ten{}i]\oY_1\Ten\oY_2\), and therefore a map 
  \begin{equation*}
    \oX_1\Ten\oY_2\oplus\oY_1\Ten\oX_2\ra[i\Ten 1-1\Ten i]\oY_1\Ten\oY_2
  \end{equation*}
  whose kernel (by a simple diagram chase) is \(\oX_1\Ten\oX_2\). Taking the 
  quotient by this kernel, we therefore obtain a sub-object \(\oZ\) of 
  \(\oY_1\Ten\oY_2\). The equaliser \(\oW\) of the two maps \(\pi\Ten{}1\) 
  and \(1\Ten{}\pi\) is clearly a sub-object of \(\oZ\), there is an exact 
  sequence \(0\ra\oX_1\Ten\oX_2\ra\oW\ra\oX_1\Ten\oX_2\ra{}0\), which was 
  defined in~\Cite{tannakian} to be the tensor product of the two given 
  sequences.

  Viewing the \(\oY_i\) as \(E\)-injective modules, with \(\eps=i\circ\pi\) 
  on each \(\oY_i\), the equaliser of \(\eps\Ten{}1\) and \(1\Ten{}\eps\) 
  coincides with the equaliser of \(\pi\Ten{}1\) and \(1\Ten\pi\), so the 
  above definition coincides with (the exact sequence corresponding to) the 
  \(E\)-module \(\oY_1\Ten^E\oY_2\). Hence \(\Plng{E}\) coincides, as a 
  tensor category, with what was called the prolongation of \(\Cc\) 
  in~\Cite{tannakian}.
\end{example}

\begin{example}
  When \(E=E_1\x{}E_2\), and we identify \(\Lmod{E}\) with 
  \((\Lmod{E_1})\x(\Lmod{E_2})\), as in Example~\ref{ex:prod1}, all notions 
  again work component wise. Hence, for \(E=\kk\x\kk\), \(\Plng{E}\) is 
  \(\Cc\x\Cc\), as a tensor category.
\end{example}

\begin{prop}\label{prp:rigid}
  Let \(\Cc\) be rigid, and let \(\oX\) be a flat \(E\)-module in \(\Cc\). We 
  set \(\oX^*=\Hom_E(\oX,E)\).
  \begin{enumerate}
    \item
      \(\oX^*\) is \(E\)-flat.
    \item
      For any \(E\)-module \(\oY\) in \(\Cc\), the canonical map 
      \(\oX^*\Ten_E\oY\ra\Hom_E(\oX,\oY)\) is an isomorphism.
  \end{enumerate}
\end{prop}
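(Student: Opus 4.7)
The plan is to prove (2) first and deduce (1). The canonical map in (2) is the adjoint, via the $\Ten_E \dashv \Hom_E$ adjunction on $\Cc_E$ furnished by Proposition~\ref{prp:modten}(2), of the composite
\[
  (\oX^* \Ten_E \oY) \Ten_E \oX \; = \; \oY \Ten_E (\oX^* \Ten_E \oX) \; \ra \; \oY \Ten_E E \; = \; \oY,
\]
in which $\oX^* \Ten_E \oX \ra E$ is the $E$-linear evaluation corresponding to the identity of $\oX^* = \Hom_E(\oX,E)$. Write $\eta_\oY \colon \oX^* \Ten_E \oY \ra \Hom_E(\oX, \oY)$ for the resulting natural transformation. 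The strategy is to verify $\eta_\oY$ is an isomorphism on ``free'' modules $\oY = E \Ten \oZ$ with $\oZ \in \Cc$, and then extend to all of $\Cc_E$ by a right-exact presentation argument.

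For the free case, both sides simplify canonically to $\oX^* \Ten \oZ$. On the left, $\oX^* \Ten_E (E \Ten \oZ) = \oX^* \Ten \oZ$, because $\oZ$ carries no $E$-action and factors past the co-equaliser~\eqref{eq:tene}. On the right, $\Hom_E(\oX, E \Ten \oZ) = \Hom_E(\oX, E) \Ten \oZ = \oX^* \Ten \oZ$: rigidity lets one pull $\oZ$ out of the internal Hom, and since $\oZ$ is $E$-trivial it also commutes with the equaliser defining $\Hom_E$ in the proof of Proposition~\ref{prp:modten}(2). The construction of $\eta$ (applied with $\oY = E \Ten \oZ$) then reduces on the pre-adjoint side to $\ev \Ten \id_\oZ$, which is exactly the pre-adjoint of the identity on $\oX^* \Ten \oZ$; hence $\eta_{E \Ten \oZ}$ is the identity under the canonical identifications, in particular an isomorphism.

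To extend to arbitrary $\oY \in \Cc_E$, note that the forgetful functor $\Cc_E \ra \Cc$ has left adjoint $\oZ \mapsto E \Ten \oZ$, whose counit $E \Ten \oY \ra \oY$ (the action map) is split in $\Cc$ by the unit of $E$ and so is an epimorphism in $\Cc_E$. Iterating on its kernel produces a presentation $E \Ten \oW_1 \ra E \Ten \oW_0 \ra \oY \ra 0$ in $\Cc_E$. The functor $\oX^* \Ten_E -$ is right-exact by construction, and $\Hom_E(\oX, -)$ is right-exact because $E$-flatness of $\oX$ implies $E$-projectivity (the remark following Definition~\ref{def:prolongf}). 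Applying both functors to the presentation and using the free-case isomorphisms term-wise gives $\eta_\oY$ an isomorphism by a cokernel comparison (or the five-lemma). This proves (2). Part (1) then follows at once: (2) identifies $\oX^* \Ten_E -$ with the exact functor $\Hom_E(\oX, -)$, so $\oX^*$ is $E$-flat.

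The main obstacle is the bookkeeping in the free-case computation: one must verify that the identifications $\oX^* \Ten_E (E \Ten \oZ) \cong \oX^* \Ten \oZ$ and $\Hom_E(\oX, E \Ten \oZ) \cong \oX^* \Ten \oZ$ are compatible with $\eta$ (rather than differing by some twist), which requires carefully tracking both $E$-module structures through the adjunctions. Once this is set up, what remains is the formal categorical version of the classical fact that $P^\vee \Ten_R N \cong \Hom_R(P,N)$ for a finitely generated projective module $P$.
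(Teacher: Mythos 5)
Your proof is correct, and it reaches the statement by a genuinely different route than the paper. The paper first isolates a claim that is the special case of (2) where \(\oY\) is an ordinary coherent \(E\)-module \(M\): it applies the exact functor \(\Hom_E(\oX,-)\) to a free resolution of \(M\) in \(\Lcoh\) and compares cokernels; part (1) follows immediately, and the general case of (2) is then obtained by invoking Deligne's identification \(\Lmod{E}\cong\Lcoh\Ten_\kk\Cc\) (Proposition~\ref{prp:deligne1}), so that the two right-exact functors \(\oX^*\Ten_E-\) and \(\Hom_E(\oX,-)\) agree on \(\Lmod{E}\) because they agree on \(\Lcoh\x\Cc\). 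You instead stay inside \(\Cc_E\) throughout: verify the free case \(\oY=E\Ten\oZ\), then reduce to it by a presentation built from the split-epi counit \(E\Ten\oY\ra\oY\) and conclude with a cokernel comparison, deducing (1) from (2) at the end. Two observations on the trade-off. First, you do not actually bypass Deligne's machinery: your appeal to the remark after Definition~\ref{def:prolongf} for exactness of \(\Hom_E(\oX,-)\) on all of \(\Cc_E\) (rather than merely on \(\Lcoh\), where it follows from flatness and duality alone) rests on Proposition~\ref{prp:flatness}, which is exactly where quasi-separability and Deligne's theorem enter; so both proofs use the same heavy input, yours in the exactness step and the paper's in the final glueing step. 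Second, your presentation-based argument is more robust on one point: the paper asserts that \(M\) has a \emph{finite} free resolution, which fails in general (over \(E=\kk[\eps]\) the module \(\kk\) has infinite projective dimension), although the paper's argument survives because only a two-step presentation is actually needed---which is precisely the form your argument takes from the outset. The bookkeeping you flag in the free case (compatibility of \(\eta\) with the two identifications) is genuine but routine, and is treated at a comparable level of detail in the paper's own proof.
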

\begin{proof}
  Both parts follow from the following special case of the second part.
  \begin{claim}
    For any (usual) coherent \(E\)-module \(M\), the canonical map 
    \(\oX^*\Ten_E{}M\ra\Hom_E(\oX,M)\) is an isomorphism.
  \end{claim}
  \begin{proof}[Proof of claim]
    \(M\) has a finite free resolution,
    \begin{equation*}
      0\ra E^{n_k}\ra\dots\ra E^{n_2}\ra E^{n_1}\ra M\ra 0
    \end{equation*}
    Applying the (exact) functor \(\Hom_E(\oX,-)\) to the sequence, we get an 
    exact sequence
    \begin{equation*}
      0\ra {(\oX^*)}^{n_k}\ra\dots\ra {(\oX^*)}^{n_2}\ra {(\oX^*)}^{n_1}\ra 
      \Hom_E(\oX,M)\ra 0
    \end{equation*}
    On the other hand, \(\oX^*\Ten_E{}M\) is, by definition, the co-kernel of 
    the map \({(\oX^*)}^{n_2}\ra {(\oX^*)}^{n_1}\) in that sequence.
  \end{proof}
  
  We now return to the proof of the Proposition.
  \begin{enumerate}
    \item
      We need to prove that the functor \(\oX^*\Ten_E{}-:\Lcoh\ra\Cc\) is 
      exact. According to the claim, this functor coincides with 
      \(\Hom_E(\oX,-)\), and since \(\oX\) is \(E\)-flat, the result follows.
    \item
      The two exact functors \(\Hom_E(\oX,-)\) and \(\oX^*\Ten_E{}-\) from 
      \(\Lmod{E}\ra\Cc\) restrict, according to 
      Proposition~\ref{prp:deligne1}, to functors on \(\Lcoh\x\Cc\), and it 
      is enough to show that they coincide on this category. The former 
      restricts to the functor \((M,\oY)\mt\Hom_E(\oX,M)\Ten\oY\), while the 
      latter to \((M,\oY)\mt(\oX^*\Ten_E{}M)\Ten\oY\). Hence the functors are 
      isomorphic by the claim.\qedhere
  \end{enumerate}
\end{proof}

\begin{cor}\label{cor:rigid}
  Assume that \(\Cc\) is rigid. Then so are \(\Flat{E}\) and \(\Plng{E}\).
\end{cor}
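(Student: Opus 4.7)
The plan is to prove rigidity of $\Flat{E}$ directly, and then deduce rigidity of $\Plng{E}$ from Corollary~\ref{cor:flatop}, which furnishes a tensor equivalence $\Plng{E} \simeq \Op{\Flat{E}}$: since being rigid is self-dual (a right dual in $\cC$ is a left dual in $\Op{\cC}$, and in a symmetric monoidal category the two coincide), rigidity transfers through this equivalence.

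For $\Flat{E}$, the candidate dual of a flat module $\oX$ is $\oX^{\vee} := \Hom_E(\oX,E)$, which lives in $\Cc_E$ because $\Cc_E$ is closed (Proposition~\ref{prp:modten}(2)), and which is itself flat by Proposition~\ref{prp:rigid}(1). So $\oX^{\vee} \in \Flat{E}$, and the subcategory is closed under duals. To verify the universal property, I would invoke the standard fact that in a closed symmetric monoidal category $(\cC_E, \Ten_E, E)$, an object $\oX$ admits a dual if and only if the natural morphism $\Hom_E(\oX,E)\Ten_E\oY \ra \Hom_E(\oX,\oY)$ is an isomorphism for all $\oY$; this is exactly what Proposition~\ref{prp:rigid}(2) supplies. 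The evaluation $\mathrm{ev}:\oX^{\vee}\Ten_E\oX\ra E$ is then obtained by transporting the identity across the isomorphism $\oX^{\vee}\Ten_E\oX \isomorphism \Hom_E(\oX,\oX)$, and the coevaluation can be extracted similarly from the $\oY = \oX$ case after composing with the unit; the triangle identities are formal consequences of these being the adjunction unit and counit.

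Having done this, I would observe that $\Flat{E}$ is a tensor subcategory of $\Cc_E$ (by Corollary~\ref{cor:flatness}) in which every object is dualizable with dual living in the subcategory, which is the definition of rigidity. Transferring along $\oX\mt\Co{\oX}:\Plng{E}\isomorphism\Op{\Flat{E}}$, the dual of $\oX\in\Plng{E}$ is the object whose image in $\Op{\Flat{E}}$ is $\Co{(\Co{\oX}^{\vee})}$, and the evaluation/coevaluation transport appropriately.

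The main potential obstacle is bookkeeping of the closed structure: I need to ensure that the internal $\Hom_E(-,-)$ constructed in Proposition~\ref{prp:modten}(2) as an equaliser is in fact the internal hom with respect to $\Ten_E$ (so that the equivalence ``dualizable $\Leftrightarrow$ canonical map is iso'' applies), and that Proposition~\ref{prp:rigid}(2) really computes this internal hom rather than some other expression. Beyond this identification, the argument is essentially formal.
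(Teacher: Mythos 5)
Your proof is correct and takes essentially the same route as the paper: rigidity of \(\Flat{E}\) is obtained from both parts of Proposition~\ref{prp:rigid} together with the dualizability criterion ``\(\oX\) has a dual iff \(\Hom_E(\oX,E)\Ten_E\oY\ra\Hom_E(\oX,\oY)\) is an isomorphism for all \(\oY\)'' (which the paper cites as Deligne 2.3, 2.5 rather than calling it a standard fact), and rigidity of \(\Plng{E}\) is then transferred through the equivalence of Corollary~\ref{cor:flatop} exactly as you do. The only cosmetic difference is that the paper records that \(\Hom_E(\oX,\oY)\) is flat whenever \(\oX\) and \(\oY\) are (so that \(\Flat{E}\) is itself closed), whereas you check dualizability inside \(\Cc_E\) and only need flatness of \(\oX^*=\Hom_E(\oX,E)\); both variants are fine.
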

\begin{proof}
  From Proposition~\ref{prp:rigid} together with 
  Corollary~\ref{cor:flatness}, we conclude that \(\Hom_E(\oX,\oY)\) is flat 
  whenever \(\oX\) and \(\oY\) are. Since it clearly satisfies the adjunction  
  property, the rigidity of \(\Flat{E}\) follows from the second part 
  Proposition~\ref{prp:rigid} and~\Deligne{2.3,2.5}.

  For \(\Plng{E}\), the statement follows from Corollary~\ref{cor:flatop}, 
  since the opposite of a rigid category is rigid. We mention only that 
  \begin{equation}
    \Hom^E(\oX,-):=\Co{\oX}\Ten_E-
  \end{equation}
  is the right adjoint to \(\oX\Ten^E-\) in \(\Plng{E}\).
\end{proof}

\subsection{Passing to the limit}\label{ssc:fibred}
We would like now to replace the finite algebra \(E\) by a pro-finite one.  
This is done, essentially, by glueing a matching sequence of flat or 
injective modules along a filtering system.

\point[Limits of categories]
Let \(\pi:\cC\ra\cI\) be a fixed functor. We say that an object \(\oX\) of 
\(\cC\) is over an object \(\oJ\) of \(\cI\) if \(\pi(\oX)=\oJ\) (and 
likewise for morphisms). The \emph{fibre} \(\cC_\oJ\) of \(\cC\) (or \(\pi\)) 
over \(\oJ\) is the sub-category of \(\cC\) consisting of objects over 
\(\oJ\) and morphisms over the identity of \(\oJ\).

Recall (say, from~\DM{Appendix}) that \(\cC\) (or \(\pi\))  is a \Def{fibred 
category}  if for any map \(f:\oI\ra\oJ\) in \(\cI\), and any object \(\oX\) 
over \(\oJ\), there is a universal map over \(f\) in \(\cC\) from an object 
\(f^*(\oX)\) over \(\oI\), and furthermore, \((gf)^*(\sX)=f^*(g^*(\sX))\) for 
all \(f:\oI\ra\oJ\), \(g:\oJ\ra\oK\) (more precisely, we are given functorial 
isomorphisms between the two, satisfying pentagon identities).

In particular, \(f^*\) is a functor from \(\cC_\oJ\) to \(\cC_\oI\).  
Conversely, given a collection \(\cC_\oJ\) of categories, one for each object 
\(\oJ\) of \(\cI\), and functors \(f^*\) for morphisms \(f\) of \(\cI\), with 
compatible isomorphisms as above, one constructs a fibred category with the 
prescribed fibres and pullbacks. In this sense, a fibred category can be 
thought of as a presheaf of categories.

Given two fibred categories \(\cC\) and \(\cD\) over \(\cI\), a 
\Def{Cartesian functor} from \(\cC\) to \(\cD\) is a functor 
\(\fF:\cC\ra\cD\) over \(\cI\), together with functorial identifications 
\(g^*\fF(\sX)=\fF(g^*\sX)\) for all morphisms \(g\) in \(\cI\). A morphism 
between Cartesian functors is a morphism of functors over the identity on 
\(\cI\), which commutes with the identifications.

We view \(\cI\) as a fibred category over itself, via the identity functor.  
More generally, for any category \(\cD\), we have a fibred category 
\(\cD\x\cI\) over \(\cI\).

\begin{defn}
  Let \(\pi:\cC\ra\cI\) be a fibred category. The \Def{inverse limit} 
  \(\Lim[\cI]{\cC}\) is the category of Cartesian functors from \(\cI\) to 
  \(\cC\).
\end{defn}

Hence, an object of \(\Lim[\cI]{\cC}\) is given by a collection of objects 
\(\sX_\oJ\) of \(\cC_\oJ\), one for each object \(\oJ\) of \(\cI\), together 
with, for each morphism \(f:\oI\ra\oJ\) in \(\cI\), an isomorphism 
\(\sX_\oI\ra{}f^*(\sX_\oJ)\), such that the system of such isomorphisms is 
compatible with compositions. In particular, if \(\cI\) has a terminal object 
\(\1\), then the assignment \((\sX_\oJ)\mt\sX_\1\) is an equivalence of 
categories \(\Lim[\cI]{\cC}\iso\cC_\1\).

Intuitively, one may think of objects of \(\cI\) as pieces of some geometric 
objects, and of the morphisms as glueing instructions. The category \(\cC\) 
can be viewed as objects of a particular kind (say, vector bundles) over 
these pieces. An object of \(\Lim[\cI]{\cC}\) can then be viewed as an object 
of the same kind on the (hypothetical) glued space.

We note that \(\Lim[\cI]{\cC}\) satisfies the expected universal property: 
The category of functors from \(\cD\) to \(\Lim[\cI]{\cC}\) is equivalent to 
the category whose objects are compatible collections of functors 
\(\cD\ra\cC_\oJ\) (in other words, to the category of Cartesian functors 
\(\cD\x\cI\ra\cC\)).

\point
As before, when the \(\Ehom\) sets are abelian groups or \(\kk\)-vector spaces, 
we assume that the pullback functors preserve this structure.  We note that 
the limit of abelian categories need not be abelian in general (see 
also~\ref{q:ablim}).  Also, the limit of \(\kk\)-linear categories need not 
be \(\kk\)-linear in our definition, since the finiteness conditions need not 
hold. However, when \(\cI\) is filtering (which is the case of interest for 
us), we may think of \(\Ehom(\sX,\sY)\) as a pro-finite \(\kk\)-vector space, 
and composition is a morphism in this category.  Furthermore, each object has 
pro-finite length.  We may call such categories \Def{pro-\(\kk\)-linear}.

\point[Tensor structure]\label{pt:limten}
Assume now that each \(\cC_\oJ\) is a monoidal category, that the functors 
\(f^*\) are given with monoidal structure, and that the monoidal  structures 
are compatible, and compatible with the composition isomorphisms (This is 
equivalent to saying that we are given a \emph{Cartesian} functor 
\(\Ten:\cC\x_\cI\cC\ra\cC\), etc.). Then the limit category 
\(\Lim[\cI]{\cC}\) also has a monoidal structure, given pointwise. If each 
\(\cC_\oJ\) admits internal \(\Ehom\)s, and each \(f^*\) is closed, then the 
limit category again admits internal \(\Ehom\)s. Finally, if each \(\cC_\oJ\) 
is rigid, then so is the limit category (note that in this case, pullbacks 
are automatically closed,~\DM{Prop.~1.9}).

\point[The fibre-wise opposite]\label{pt:limop}
Given a fibred-category \(\pi:\cC\ra\cI\), each pullback functor \(f^*\) 
determines a functor between the opposite categories. It is easy to see that 
the data of the fibred-category determines a fibred-category data on the 
collection of opposite categories, which we call the opposite fibred category 
\(\Op{\cC}\). Hence we are given a functor \(\pi:\Op{\cC}\ra\cI\), and the 
underlying category \(\Op{\cC}\) is \emph{not} the opposite category in the 
usual sense, but we shall never consider the latter in this setting, so there 
is no room for confusion.

When the fibres are abelian, or monoidal, or rigid, then so are the 
opposites, and if the original data came (e.g.) from a fibred monoidal 
category, then the opposite is again fibred monoidal. In particular, if each 
fibre is rigid, then the assignment \(\oX\mt\Co{\oX}\) is a Cartesian tensor 
equivalence between \(\cC\) and \(\Op{\cC}\).

We also note that the limit of the opposite category is the opposite of the 
limit category. This is true even including the monoidal structure.

\point[Pullbacks for modules]\label{pt:pullbacks}
We now assume that we are given a category \(\cC\) as in~\ref{ssc:tensor}. It 
will be convenient to think about \(E\)-modules in \(\cC\) geometrically, 
just like with usual modules. Thus, an \(E\)-module in \(\cC\) is thought of 
as a family of objects of \(\cC\), parametrised by \(\spec(E)\), a flat 
module corresponds to a bundle of such objects, etc.

Given a map from a finite \(\kk\)-algebra \(E\) to another such algebra 
\(F\), corresponding to a map \(f:\spec(F)\ra\spec(E)\) (over \(\kk\)), we 
have functors \(f^*,f^!:\Lmod{E}\ra\Lmod{F}\), and a functor 
\(f_*:\Lmod{F}\ra\Lmod{E}\), given by \(f^*(\oX)=F\Ten_E\oX\), 
\(f^!(\oX)=\Hom_E(F,\oX)\) and \(f_*(\oX)\) is \(\oX\) viewed as an 
\(E\)-module via \(f\).  It follows directly from the definitions that 
\(f^*\) is left adjoint to \(f_*\), which is left adjoint to \(f^!\). Also, 
given another map \(g:\spec(G)\ra\spec(F)\), there are obvious isomorphisms 
\(g^*\circ{}f^*\ra(fg)^*\) and \(g^!\circ{}f^!\ra(fg)^!\).  Therefore, the 
determine two fibred categories \(\cC^*\) and \(\cC^!\) over the category 
\(\cS\) of finite schemes over \(\kk\), with pullbacks given by \(f^*\) and 
\(f^!\), respectively.

\begin{prop}\label{prp:pullbacks}
  Assume \(\cC\) is rigid, and let \(f:\spec(F)\ra\spec(E)\) be a map over 
  \(\kk\).
  \begin{enumerate}
    \item For any object \(\oX\) of \(\Lmod{E}\), there are canonical 
      isomorphisms \(f^!(\Co{\oX})=\Co{(f^*(\oX))}\)
    \item If \(\oX\) is \(E\)-flat, then \(f^*(\oX)\) is \(F\)-flat.
    \item If \(\oX\) is \(E\)-injective, then \(f^!(\oX)\) is 
      \(F\)-injective.
  \end{enumerate}
\end{prop}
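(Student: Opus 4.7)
The plan is to dispatch the three parts in sequence, each using the previous.

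For part~(1), I apply equation~\eqref{eq:dual} of Proposition~\ref{prp:modten}. Rewriting that formula as $\Co{(\oZ\Ten_E\oW)} = \Hom_E(\oZ,\Co{\oW})$ for $E$-modules $\oZ,\oW$ in $\Cc$ (obtained by replacing $\Co{\oX},\Co{\oY}$ there with $\oZ,\oW$), and then specialising to $\oZ=F$ (the $\kk$-algebra $F$ viewed as an object of $\Lmod{E}$) and $\oW=\oX$, I read off $\Co{(F\Ten_E\oX)} = \Hom_E(F,\Co{\oX})$, which is precisely $\Co{f^*(\oX)} = f^!(\Co{\oX})$. Naturality in $\oX$ is inherited from the natural isomorphisms defining~\eqref{eq:dual}.

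For part~(2), I would establish and apply the projection formula
\begin{equation*}
M\Ten_F f^*(\oX) \;\cong\; M\Ten_E\oX
\end{equation*}
for any right $F$-module $M$, where on the right side $M$ is regarded as a right $E$-module via $f$. Both sides are right-exact functors of $M$ from $\Rcoh[F]$ to $\Cc$ which agree on the free module $M=F$ (both give $f^*(\oX)$); the equivalence of Proposition~\ref{prp:deligne1} then forces them to coincide as functors. Therefore $-\Ten_F f^*(\oX)$ factors as the restriction of scalars $f_*\colon\Rcoh[F]\to\Rcoh[E]$ followed by $-\Ten_E\oX$. The former is exact, and the latter is exact by $E$-flatness of $\oX$ (Proposition~\ref{prp:flatness}), so the composition is exact, i.e.\ $f^*(\oX)$ is $F$-flat.

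Part~(3) then follows by combining the first two. By definition, $f^!(\oX)$ is $F$-injective iff $\Co{f^!(\oX)}$ is $F$-flat. Applying part~(1) with $\Co{\oX}$ in place of $\oX$ (and using $\Co{\Co{\oX}} = \oX$) produces a canonical isomorphism $\Co{f^!(\oX)} \cong f^*(\Co{\oX})$. Since $\oX$ is $E$-injective, $\Co{\oX}$ is $E$-flat, and part~(2) delivers $F$-flatness of $f^*(\Co{\oX})$, as required.

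The only step that genuinely requires work is the projection formula in part~(2): in the categorical setting one has to compare the coequalisers defining $\Ten_E$ and $\Ten_F$ and track their universal properties. This is routine, but it is the place where one must unpack definitions carefully.
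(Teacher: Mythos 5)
Your proposal is correct and follows essentially the paper's own route: part~(1) is read off from equation~\eqref{eq:dual} of Proposition~\ref{prp:modten}, part~(2) is the classical base-change argument that the paper compresses into ``the same as for usual modules'', and part~(3) is deduced formally from the first two, exactly as in the paper. The only slip is a citation: the result that identifies the two right-exact functors in your projection formula is Deligne's equivalence between \(E\)-modules in \(\Cc\) and right-exact \(\kk\)-linear functors on \(\Rcoh\) (the unlabelled proposition the paper quotes from Deligne, \S~5.2), not Proposition~\ref{prp:deligne1}, which is the tensor-product decomposition of \(\Lmod{E}\); note also that this equivalence requires the two functors to agree at \(M=F\) \emph{as \(F\)-modules}, which holds here since both induced actions are the standard one on \(f^*(\oX)\).
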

\begin{proof}
  \begin{enumerate}
    \item The isomorphism is given by Equation~\eqref{eq:dual} of 
      Proposition~\ref{prp:modten}. Since all constructions are functorial, 
      it commutes with the \(F\)-action.
    \item The same as for usual modules
    \item By the first two parts\qedhere
  \end{enumerate}
\end{proof}

We note that the tensor structure was not used in any essential way (the 
duality could be replaced by passing to the opposite category). On the other 
hand, given the tensor structure, the restriction functors are tensor functor 
for the corresponding structure, in the obvious way, and the canonical 
isomorphisms are tensor isomorphisms.

It follows from the proposition that the fibred categories \(\cC^*\) and 
\(\cC^!\) above contain fibred sub-categories \(\cC^f\) and \(\cC^i\) of flat 
and injective modules, respectively (over the same base).

\point[Modules over pro-finite algebras]
Let \(E\) be a pro-finite algebra over \(\kk\). Hence \(E\) is a co-filtering 
system of finite algebras, indexed by a category \(\cI\). Equivalently, it is 
given by an ind-object \(\spec(E)\) of the category \(\cS\) of finite schemes 
over \(\kk\) (i.e., it is a formal set in the terminology of~\Sec{sec:ms}).

Pulling back the categories \(\cC^f\) and \(\cC^i\) from above, we obtain 
fibred categories \(\Res{\cC^f}{\cI}\) and \(\Res{\cC^i}{\cI}\) over \(\cI\).

\begin{defn}\label{def:prolongp}
  For a pro-finite algebra \(E\), we defined \(\Flat{E}\), the category of 
  \Def{flat \(E\)-modules} in \(\cC\), to be the limit \(\Lim[\cI]{\cC^f}\) 
  of the fibred category of flat modules along \(\cI\). Dually, we define the 
  category of \(E\)-injective modules (or the \Def{\(E\)-prolongation of 
  \(\cC\)}) \(\Plng{E}\) as the limit \(\Lim[\cI]{\cC^i}\).
\end{defn}

We note that if \(E\) happens to be a finite algebra, this definition agrees 
with the previous one by the remarks following the definition of the limit.

\begin{cor}\label{cor:proflatop}
  For any pro-finite algebra \(E\), the categories \(\Flat{E}\) and 
  \(\Plng{E}\) are rigid (non-abelian) tensor categories, and 
  \(\oX\mt\Co{\oX}\) determines a tensor equivalence 
  \(\Plng{E}\ra\Op{\Flat{E}}\).
\end{cor}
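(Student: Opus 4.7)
The plan is to assemble the statement from the finite case by passing to the limit of fibred categories, using the machinery already set up in~\rpt{pt:limten} and~\rpt{pt:limop}.

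First I would verify that the fibred categories $\cC^f$ and $\cC^i$ of flat and injective modules, restricted to the index category $\cI$ of the pro-system defining $E$, are fibred \emph{rigid tensor} categories. On each fibre $\cC^f_\oJ=\Flat{E_\oJ}$ and $\cC^i_\oJ=\Plng{E_\oJ}$ this is Corollary~\ref{cor:rigid}. For a morphism $f:\spec(F)\ra\spec(E)$ in $\cS$, the compatibility of $f^*$ with $\Ten_E$ (and of $f^!$ with $\Ten^E$) is essentially formal from the coequaliser/equaliser descriptions in~\rpt{pt:base}, together with the observation noted after Proposition~\ref{prp:pullbacks} that the restrictions are tensor functors for the appropriate structures.

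Having placed ourselves in the setting of~\rpt{pt:limten}, the limits $\Lim[\cI]{\cC^f}=\Flat{E}$ and $\Lim[\cI]{\cC^i}=\Plng{E}$ automatically inherit a rigid tensor structure (computed pointwise along the system). For the tensor equivalence, Corollary~\ref{cor:flatop} gives a tensor equivalence $\Plng{E_\oJ}\ra\Op{\Flat{E_\oJ}}$ on each fibre via $\oX\mt\Co{\oX}$, and Proposition~\ref{prp:pullbacks}(1) provides the canonical identification $f^!(\Co{\oX})=\Co{(f^*(\oX))}$, which is exactly the Cartesian data needed to promote $\oX\mt\Co{\oX}$ to a Cartesian tensor equivalence between the fibred categories $\cC^i$ and $\Op{\cC^f}$ in the sense of~\rpt{pt:limop}. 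Taking limits and using the observation at the end of~\rpt{pt:limop} that the limit of the opposite is the opposite of the limit (as tensor categories), we obtain the desired equivalence
\begin{equation*}
  \Plng{E}=\Lim[\cI]{\cC^i}\ra\Lim[\cI]{\Op{\cC^f}}=\Op{\Lim[\cI]{\cC^f}}=\Op{\Flat{E}}.
\end{equation*}

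The only genuinely delicate point is bookkeeping: making sure that the tensor structures on the limit categories coincide with the ones induced by the pointwise tensor products $\Ten_E$ and $\Ten^E$ defined in the finite case, and that the Cartesian isomorphisms $f^!\Co{\oX}=\Co{f^*\oX}$ are tensor isomorphisms (not merely linear). Both follow from the functoriality of the constructions in~\rpt{pt:base} and the fact, already used in Corollary~\ref{cor:flatop}, that $\oX\mt\Co{\oX}$ exchanges $\Ten_E$ with $\Ten^E$. Once this compatibility is recorded, there is nothing left to prove beyond invoking the general limit formalism.
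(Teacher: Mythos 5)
Your proposal is correct and follows exactly the paper's own route: the paper's proof is precisely the combination of the finite-case results (Corollaries~\ref{cor:flatness}, \ref{cor:flatop}, \ref{cor:rigid}) with the limit formalism of~\rpt{pt:limten} and~\rpt{pt:limop}, which is what you spell out, with Proposition~\ref{prp:pullbacks}(1) correctly identified as the Cartesian compatibility making \(\oX\mt\Co{\oX}\) a Cartesian tensor equivalence. Your write-up is simply a more detailed expansion of the paper's citation-style proof.
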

\begin{proof}
  By Corollaries~\ref{cor:flatness},~\ref{cor:flatop} and~\ref{cor:rigid}, 
  and the discussions in~\ref{pt:limten} and~\ref{pt:limop}.
\end{proof}

We note that, as in the finite case, a flat coherent \(E\)-module \(M\) 
(i.e., an object of \(\Flat[\Vec]{E}\)) determines, for each object \(\oX\) 
of \(\cC\) an object \(M\Ten\oX\) of \(\Flat{E}\) and an object 
\(\Ehom(M,\oX)\) of \(\Plng{E}\).

\point\label{pt:pbpro}
The discussion on pullbacks (\ref{pt:pullbacks}) and 
Proposition~\ref{prp:pullbacks} extend to maps between pro-finite algebras.  
Let \(f:\spec(F)\ra\spec(E)\) correspond to a map between two pro-finite 
algebras \(E\) and \(F\). We define \(f^!:\Plng{E}\ra\Plng{F}\) as follows 
(\(f^*\) is analogous).

First, assume that \(F\) is finite. Then \(f\) is induced by some 
\(f_1:\spec(F)\ra\spec(E_1)\), where \(E_1\) is finite. Hence we have a 
functor \(f_1^!:\Plng{E_1}\ra\Plng{F}\). Given a Cartesian functor 
\(\oX:E\ra\cC^i\) (where we think of \(E\) as the index category), define 
\(f^!(\oX)=f_1^!(\oX(E_1))\). This is well defined since \(\oX\) is 
Cartesian.

A general \(F\) is the inverse limit of finite ones, and for each finite 
piece \(F_a\) we obtain from the previous step a functor 
\(f_a^!:\Plng{E}\ra\Plng{F_a}\). These functors form a matching family, and 
hence determine a functor to the limit \(\Plng{F}\).

\section{Fields with Operators}\label{sec:ms}
In this section, we recall the formalism introduced in~\Cite{MS1} 
and~\Cite{MS2} (adapted to our setting). As indicated in the introduction, 
this formalism provides a unified framework for fields endowed with 
operators, including differential and difference fields, and (Kolchin-style) 
algebraic geometry over them.

The use of geometric language is mainly for the purpose of intuition. The 
case we will eventually be interested in is when \(\sX=\spec(\kk_0)\) and 
\(\sZ=\spec(\kk)\), with \(\kk\) a field extending \(\kk_0\), and the reader 
will not lose (or gain) anything by assuming this from the beginning (on the 
other hand, some of the examples are mainly interesting when \(\kk_0\) is not 
a field).

We call a map \(f:\sX\ra\sY\) of schemes \Def{quasi-separable} if for any 
\(K\)-point \(y\in\sY(K)\), the fibre \(\sX_y\) of \(f\) over \(y\) has the 
form \(\spec(A)\), with \(A\) a quasi-separable \(K\)-algebra 
(Remark~\ref{rmk:nonperf}).  As with ramification, one could instead ask the 
same condition for schematic points, or for geometric points. We note that 
this property is preserved by base-change. This condition is not important 
for the current section, but will play a role later, for reasons explained 
in~\Sec{sec:prolong}.

\subsection{Formal sets}
\begin{defn}
  Let \(\sX\) be a quasi-compact Noetherian scheme. By a \Def{formal set} 
  over \(\sX\), we mean an ind-object in the category of flat, finite schemes 
  over \(\sX\) (i.e., schemes over \(\sX\) whose sheaf of algebras is flat 
  and coherent as an \(\sX\)-module).
  
  The formal set is \Def{strict} if it can be represented by a system of 
  closed embeddings. It is \Def{quasi-separable} if it can be represented by 
  a system of quasi-separable schemes over \(\sX\).
  
  A \Def{pointed} formal set is a formal set \(\FY\) together with a map 
  \(\sX\ra\FY\) over \(\sX\).

  A \Def{formal (abelian) monoid} (etc.) over \(\sX\) is an (abelian) monoid 
  object in this category.
\end{defn}

\begin{example}
  Let \(\set{S}\) be a finite set. The co-product 
  \(\set{S}\x\sX=\coprod_\set{S}\sX\) is clearly flat and finite over 
  \(\sX\), and is therefore a (finite) formal set over \(\sX\) (it is also 
  quasi-separable).  A map \(f:\set{S}\ra\set{T}\) to another finite set 
  induces a morphism \(f\x{}1:\set{S}\x\sX\ra\set{T}\x\sX\) over \(\sX\), and 
  any morphism over \(\sX\) comes from a map of sets. Hence 
  \(\set{S}\mt\set{S}\x\sX\) is a fully faithful exact embedding of finite 
  sets in formal sets over \(\sX\).  Since a set is the same as an ind-finite 
  set, this also determines a fully faithful exact embedding of the category 
  of sets into (strict, quasi-separable) formal sets.

  Since the embedding is exact, it induces an embedding of pointed sets, 
  (abelian) monoids, etc., into the category of formal such objects. For 
  instance, the monoids of natural numbers or integers can be viewed as 
  formal monoids over \(\sX\). As with usual sets, it is \emph{not} the case 
  that a formal monoid is an ind-object in finite formal monoids.

  As explained in the introduction, the guiding principle for all that 
  follows is that any construction or result valid for usual sets should 
  extend to formal sets.
\end{example}

\begin{example}
  Let \(\sY\) be a scheme over \(\sX\). If \(\sX\) is smooth of dimension at 
  most \(1\)\footnote{The condition on \(\sX\) is used to ensure that the 
  system is filtering, since in this case a module is flat if and only if it 
  is torsion-free, so the union of two finite flat subschemes is again flat 
  (and finite). It is possible that this condition is redundant, and the 
  construction can be carried out in general}, the collection of flat and 
  finite closed subschemes of \(\sY\) over \(\sX\) forms a filtering system, 
  and so determines a strict formal set \(\FY\) (which might be empty).
  
  There is a map \(\FY\ra\sY\) (in the category of ind-schemes over \(\sX\)), 
  and every map from a formal set to \(\sY\) (all over \(\sX\)) factors 
  uniquely via \(\FY\). For such \(\sX\), the previous example is obtained 
  from this one as a special case by taking, for an arbitrary set 
  \(\set{S}\), \(\sY=\coprod_\set{S}\sX\).
\end{example}

\begin{example}
  In the situation of the previous example, let \(\sY_0\) by a fixed 
  subscheme of \(\sY\), flat and finite over \(\sX\). The sub-system of 
  \(\FY\) consisting of subschemes with the same set-theoretic support as 
  \(\sY_0\) is again filtering (even without the restrictions on \(\sX\)), 
  and can be identified with the completion of \(\sY\) along \(\sY_0\).

  For instance, if \(\sX=\spec(\kk)\) is a point, \(\sY=\AA^1\) and \(\sY_0\) 
  the point at the origin, we obtain the formal scheme with one point, and 
  structure sheaf \(\kk[[x]]\).
\end{example}

\begin{example}\label{ex:fgl}
  As a special case of the previous example, any formal group law over a ring 
  \(\kk\) determines a (quasi-separable) formal abelian group over 
  \(\sX=\spec(\kk)\) in our sense.
\end{example}

\begin{example}\label{ex:hss}
  Any Hasse--Schmidt system \(\ul{\mathcal{D}}\) over \(A\), in the sense 
  of~\Cite[Def.~2.2]{MS2}, determines a pointed strict formal set 
  \((\spec(\mathcal{D}_i(A)))_i\) over \(\sX=\spec(A)\), indexed by the  
  natural numbers. Conversely, any such pointed strict formal set \((\sY_i)\) 
  can be extended to a Hasse--Schmidt system by choosing a compatible system 
  of \(A\) bases, where \(\mathcal{D}_i(\sZ)=\sY_i\x_\sX\sZ\) 
  (cf.~\Cite[after Def.~2.1]{MS2}).

  Likewise, an iterative Hasse--Schmidt system over \(A\) 
  (\Cite[Def.~2.17]{MS2}) is naturally identified (up to a choice of basis) 
  with a formal abelian monoid over \(\sX\).
\end{example}

We work in the category of ind-schemes over \(\sX\). Given a formal set 
\(\FY\), we get, for any scheme \(\sZ\) over \(\sX\), an ind-scheme 
\(\FY\x_\sX\sZ\), and if \(\FY\) is pointed, a map \(\sZ\ra\FY\x_\sX\sZ\).  
This process extends to the case of ind-schemes \(\sZ\).

\begin{defn}
  Let \(\FY\) be a pointed formal set over \(\sX\) (as above). A 
  \Def{\(\FY\)-scheme} is a scheme \(\sZ\) over \(\sX\), together with a map 
  \(\FY\x_\sX\sZ\ra\sZ\) over \(\sX\), such that the induced map 
  \(\sZ\ra\sZ\) resulting from the point is the identity.

  If \(\FY\) is a formal abelian monoid, an~\Def{iterative \(\FY\)-scheme} is 
  an \(\FY\)-scheme in which the structure map is a monoid action (when 
  \(\FY\) is such a monoid, all \(\FY\)-schemes we will consider will be 
  iterative, so we will usually omit the title ``iterative'').

  If \(\sZ\) is an (iterative) \(\FY\)-scheme, an \Def{(iterative) 
  \(\FY\)-scheme over \(\sZ\)} is a map of (iterative) \(\FY\)-schemes 
  \(\sW\ra\sZ\).
\end{defn}

\begin{example}
  Under the identification in Example~\ref{ex:hss}, an affine \(\FY\)-scheme 
  is the same as a Hasse--Schmidt ring in the sense of~\Cite[Def.~2.4]{MS2} 
  (note that the choice of basis in the definition of a Hasse--Schmidt system 
  does not play any role in the definition of a Hasse--Schmidt ring).  
  Likewise, with the adjective ``iterative'' added.
\end{example}

\begin{example}\label{ex:dual}
  If \(\sX\) is over \(\spec(\kk)\), and 
  \(\FY=\sX\x_{\spec(\kk)}\spec(\kk[\epsilon])\), where \(\kk[\epsilon]\) is 
  the ring of dual numbers (\(\epsilon^2=0\)), pointed in the only possible 
  way, then an \(\FY\)-structure on \(\sZ\) is the same as a vector field on 
  \(\sZ\) over \(\sX\).

  If \(\kk\) has characteristic \(2\), then \(\FY\) is a finite flat group 
  sub-scheme of the additive group over \(\sX\) (and therefore a formal group 
  in our sense), and an iterative \(\FY\)-structure on \(\sZ\) corresponds to 
  a vector field \(\dd\) such that \(\dd^2=0\).
\end{example}

\begin{example}\label{ex:deriv}
  If \(\sX\) is over \(\spec(\kk)\), where \(\kk\) is a field of 
  characteristic \(0\), and \(\FY=\sX[[t]]\), the additive formal group over 
  \(\sX\) (as in Example~\ref{ex:fgl}), then an (iterative) \(\FY\)-structure 
  on a scheme \(\sZ\) over \(\sX\) is again the same as a vector field on 
  \(\sZ\) over \(\sX\). In general, such a structure corresponds to a system 
  of Hasse--Schmidt derivations. This is explained in detail 
  in~\Cite[Prop.~2.20]{MS2}, but we recall the computation for convenience.  
  
  Since everything is local, we may assume that \(\sX=\spec(A)\) and 
  \(\sZ=\spec(B)\) are affine. Then \(\FY=\spec(A[[t]])\), and an 
  \(\FY\)-structure on \(\sZ\) is an algebra map \(d:B\ra{}B[[t]]\) over 
  \(A\). Hence it is given by \(d(b)=\sum_{i\in\w}\dd_i(b)t^i\) for some maps 
  \(\dd_i:B\ra{}B\). The statement that \(d\) is an algebra map means that 
  each \(\dd_i\) is an \(A\)-module map, and that 
  \(\dd_n(ab)=\sum_{i\le{}n}\dd_i(a)\dd_{n-i}(b)\) for \(a,b\in{}B\). The 
  condition on the base point \(A[[t]]\ra{}A\) means that \(\dd_0\) is the 
  identity.  Finally, iterativity means that \(d\) makes \(B\) an 
  \(A[[t]]\)-comodule (for the additive group law 
  \(c:t\mt{}t\Ten{}1+1\Ten{}t\)), so that for all \(b\in{}B\)
  \begin{equation}
    \sum_{i,j\in\w}\binom{i+j}{i}\dd_{i+j}(b)t^i\Ten t^j= (1\Ten{}c)(d(b))= 
    d(d(b))= \sum_{i,j\in\w}\dd_i(\dd_j(b))t^i\Ten{}t^j
  \end{equation}
  Hence, \(\binom{i+j}{i}\dd_{i+j}=\dd_i\circ\dd_j\), which is precisely the 
  definition of an iterative Hasse--Schmidt derivation. See also 
  Example~\ref{ex:freeadd}, where we make a similar computation for usual 
  derivations.
\end{example}

\begin{example}
  If \(\FY\) is a pointed set (over \(\sX\)), then an \(\FY\)-structure on 
  \(\sZ\) is simply a collection of endomorphisms of \(\sZ\), indexed by 
  \(\FY\), such that the point corresponds to the identity. Likewise, if 
  \(\FY\) is a (discrete) monoid (or group), an (iterative) \(\FY\)-structure 
  is an action of \(\FY\) on \(\sZ\). In particular, for \(\FY=(\ZZ,+)\), an 
  \(\FY\)-structure on \(\sZ\) is the same as an automorphism of \(\sZ\) 
  (cf.~\Cite[5.1]{MS2}).
\end{example}

\point[Free formal monoids]
We will see in Prop.~\ref{prp:free} below that to any pointed formal set 
\(\FY\) we may associate a free formal monoid \(\Free{\FY}\), such that 
\(\FY\)-schemes are identified with iterative \(\Free{\FY}\)-schemes. For 
this reason, we are free to restrict our attention to the iterative case from 
now on.

\subsection{Prolongations}\label{ssc:prolong}
From now on we fix a base scheme \(\sX\), and take all schemes, formal sets, 
etc., to be over \(\sX\), without mentioning it. We also fix a pointed formal 
set \(\FY_0\).

\point
For a scheme \(\sY\), we write \(\Sch[\sY]\) for the category of 
quasi-projective schemes over \(\sY\).

We say that a map \(q:\sW\ra\sY\) of ind-objects is \Def{compact} if for any 
map \(r:\sY_0\ra\sY\) where \(\sY_0\) is compact (i.e., an object in the 
original category), the pullback \(r^*(\sW)\) is compact as well (the term 
``proper'' would be better, but this becomes confusing in the context of 
schemes).  For \(\sY\) an ind-scheme, we denote by \(\Sch[\sY]\) the category 
of ind-quasi-projective schemes compact over \(\sY\) (note that a compact 
ind-scheme over a scheme is a scheme, so there is no contradiction).

\point[Weil restriction]
A map \(p:\sY\ra\sZ\) of schemes determines a base change functor 
\(p^*:\Sch[\sZ]\ra\Sch[\sY]\). When \(p\) is flat and finite, this functor 
has a right adjoint, \(p_*\), called the \Def{Weil restriction} (cf, e.g., 
\Cite[A.5]{restrict}). Hence, if \(q:\sW\ra\sY\) is a scheme over \(\sY\), 
and \(\sT\) is a scheme over \(\sZ\), a \(\sT\) point of \(p_*(\sW)\) 
corresponds to a family of sections of \(q\), parametrised by \(\sT\).

Given a diagram of schemes over \(\sZ\)
\begin{equation}
  \xymatrix{
  \sW_1:=r^*(\sW_2)\ar[rr]\ar_{q_1}[d]&&\sW_2\ar^{q_2}[d]\\
  \sY_1\ar_{p_1}[dr]\ar_r[rr]&&\sY_2\ar^{p_2}[dl]\\
  &\sZ&
  }
\end{equation}
a section of \(q_2\) restricts to a section of \(q_1\), so we obtain a map 
\({p_2}_*(\sW_2)\ra{p_1}_*(\sW_1)\) over \(\sZ\). Hence, if \(p:\FY\ra\sZ\) 
is a formal set over \(\sZ\), and \(\sW\) is a compact (quasi-projective) 
ind-scheme over \(\FY\), we obtain a pro-scheme \(p_*(\sW)\) over \(\sZ\). We 
note that if \(\sT\) is a scheme over \(\sZ\), the pullback \(p^*(\sT)\) is 
compact over \(\FY\), and we still have the adjunction property 
\(\Ehom_{\Sch[\FY]}(p^*(\sT),\sW)=\Ehom_{\Pro{\Sch[\sZ]}}(\sT,p_*(\sW))\).  This 
fact further extends formally to the case when \(\sT\) is an ind-scheme.

We note also that when the map \(r\) above is a closed embedding, the 
resulting map \({p_2}_*\sW_2\ra{p_1}_*\sW_1\) is dominant (since an open 
subset of \(\sW_1\) comes from an open subset of \(\sW_2\)), so when \(\FY\) 
is strict, \(p_*\sW\) is strict as well.

\begin{example}\label{ex:arc}
  If \(\kk\) is a field, \(\sZ=\spec(\kk)\), \(\FY=\spec(\kk[[x]])\), and 
  \(\sW\) is a scheme over \(\kk\), then \(p_*(p^*(\sW))\) is the arc space 
  of \(\sW\). The adjunction map \(\sW\ra{}p_*(p^*(\sW))\) is the 
  \(0\)-section.
\end{example}

\begin{remark}
  The notation is chosen so that it is compatible when the scheme \(\sW\) 
  over \(\sZ\) is identified with the presheaf (on \(\Sch[\sZ]\)) it 
  represents. When \(\sW\) is affine over \(\sZ\), it corresponds to an 
  \(\OO_\sZ\)-algebra (in particular, \(\OO_\sZ\)-module) \(\OO_\sW\), but 
  the operations above \emph{do not} correspond to the similarly denoted 
  operations on modules.
\end{remark}

\begin{defn}
  Let \(\mu:\FY_0\x\sZ\ra\sZ\) be an \(\FY_0\)-scheme, and let \(\sW\) be a 
  quasi-projective scheme over \(\sZ\). The \Def{\(\sZ\)-prolongation} of 
  \(\sW\) is the pro-scheme \(\tau(\sW)=p_*(\mu^*(\sW))\) over \(\sZ\), where 
  \(p\) is the projection \(p:\FY:=\FY_0\x\sZ\ra\sZ\). The base point of 
  \(\FY_0\) determines a map \(\pi^\tau_\sW:\tau(\sW)\ra\sW\).
\end{defn}

\point\label{pt:plngadj}
Thus, \(\tau(\sW)\) represents the functor \(\sT\mt\sW(\mu_!(\FY_0\x\sT))\) 
on \(\Sch[\sZ]\), where \(\mu_!(\FY_0\x\sT)\) is the ind-scheme 
\(\FY_0\x\sT\), with \(\sZ\) structure given by composition with \(\mu\) (in 
other words, \(\mu_!\) is the left adjoint of \(\mu^*\)).

A map \(\sT\ra\sZ\) of \(\FY_0\)-schemes induces a map 
\(\mu_!(\FY_0\x\sT)\ra\sT\), hence induces by the previous paragraph a 
function of sets \(\nabla^\sT:\sW(\sT)\ra\tau_\sZ(\sW)(\sT)\) 
(cf.~\Cite[Def.~2.10]{MS2}). In particular, an \(\FY_0\)-structure on \(\sW\) 
is the same as a section of \(\pi:\tau(\sW)\ra\sW\).

\point\label{pt:prolong}
The functor \(\tau\) extends in an obvious way to a functor on 
(quasi-projective) pro-schemes over \(\sZ\). In particular, \(\tau^2\sW\) 
makes sense. Assume now that we are given a monoid structure 
\(m:\FY_0\x\FY_0\ra\FY_0\), and that \(\mu\) is a monoid action. Then for any 
scheme \(\sT\) over \(\sZ\) we obtain a map 
\(m\x1_\sT:\mu_!(\FY_0\x\mu_!(\FY_0\x\sT))\ra\mu_!(\FY_0\x\sT)\). By taking 
\(\sT\)-points, this map induces a map \(m^\tau:\tau\ra\tau^2\), and the 
monoid axioms imply that \((\tau,\pi^\tau,m^\tau)\) is a co-monad (on 
\(\Pro{\Sch[\sZ]}\)). An \(\FY_0\)-scheme over \(\sZ\) is the same as a 
co-action of this co-monad (See~\Cite[\S~VI]{CWM} for co-monads; there is 
some more discussion on this in~\Sec{q:monads}).

In particular, each \(\tau\sW\) is an \(\FY_0\)-scheme over \(\sZ\), which is 
universal among \(\FY_0\)-schemes over \(\sW\). In other words, the functor 
\(\tau\) from \(\Pro{\Sch[\sZ]}\) to \(\FY_0\)-pro-schemes over \(\sZ\) is 
right adjoint to the forgetful functor (as promised in the introduction).

\begin{example}\label{ex:twisted}
  Let \(\kk\) be a field, \(\sZ=\spec(\kk)\) and \(\FY_0=\spec(\ZZ[\eps])\), 
  as in Example~\ref{ex:dual}, so that an \(\FY_0\)-structure on \(\sZ\) 
  corresponds to a derivation \('\) on \(\kk\). If \(\sW\) is a scheme over 
  \(\kk\), \(\tau(\sW)\) is then a scheme whose \(A\) points (for a 
  \(\kk\)-algebra \(A\)) are \(\sW(A[\eps])\), where the \(\kk\) algebra 
  structure on \(A[\eps]\) is given by \(x\mt{}x+x'\eps\). Hence 
  \(\tau(\sW)\) is the twisted tangent bundle of \(\sW\) over \(\kk\).

  If \(A\) itself is endowed with a vector field \('\) compatible with the 
  one on \(\kk\) (i.e., with an \(\FY_0\)-structure over \(\sZ\)), then the 
  map \(\nabla^A\) above is induced by pre-composing with 
  \(\spec(A[\eps])\ra\spec(A)\), \(a\mt{}a+a'\eps\), i.e., by differentiating 
  the \(A\)-points of \(\sW\). In particular, a vector field on \(\sW\) 
  (extending that on \(\kk\)) is the same as a section of the twisted tangent 
  bundle.
\end{example}

\begin{example}
  Generalising Example~\ref{ex:arc}, we may consider the special case 
  \(\sX=\sZ=\spec(\kk)\), with the trivial action of (any) \(\FY_0\). Then 
  \(\tau\sW\) is the analogue of the arc space (or the tangent bundle) for 
  \(\FY=\FY_0\). In particular, an \(\FY\)-structure on \(\sW\) is the same 
  as a section of \(\tau\sW\ra\sW\) (this is analogous to the statement that 
  a derivation on \(\sW\) is the same as a morphism \(\Omega^1\sW\ra\sW\) of 
  \(\sW\) modules).
  
  The functor \(\tau\sW\) is the internal-hom \(\Hom(\FY,\sW)\), in the sense 
  that \(\tau\sW(\sT)=\Ehom(\FY\x\sT,\sW)\) (a projective limit of morphisms 
  of schemes over \(\kk\)). This all remains true for any affine \(\sW\) (not 
  necessarily finitely generated), since any affine scheme can be viewed as 
  an inverse system of finite generated ones.

\end{example}

\point
We call a map of pro-schemes \(f:\sU\ra\sV\) a closed embedding if for any 
map \(p:\sU\ra\sU_0\), with \(\sU_0\) a scheme, there is a map 
\(q:\sV\ra\sV_0\) with \(\sV_0\) a scheme, such that \(q\circ{}f\) factors 
through \(f_0\circ{}p\), with \(f_0:\sU_0\ra\sV_0\) a closed embedding.

\begin{defn}
  Let \(\FY\) be a formal monoid acting on a scheme \(\sZ\), and let \(\sW\) 
  be a quasi-projective scheme over \(\sZ\). A \Def{\(\FY\)-subscheme} of 
  \(\sW\) is a closed subscheme of \(\tau\sW\) that is closed under the 
  action. In other words, it is an \(\FY\) pro-scheme \(\sW_1\) over \(\sW\), 
  such that the induced map \(\sW_1\ra\tau\sW\) is a closed embedding.
\end{defn}

\subsection{The affine picture}
Through most of this section, we only talk about formal sets, and not their 
actions, so we will use \(E\) and \(\FY\) in place of \(E_0\) and \(\FY_0\).

\point\label{pt:affine}
Assume that \(\sX=\spec(A)\). Then a formal set \(\FY\) over \(\sX\) 
corresponds to a projective system \(E=(E_i)\) of finite flat \(A\)-algebras, 
and a base point corresponds to an \(A\)-algebra map \(E\ra{}A\). A monoid 
structure \(m\) on \(\FY\) determines a bi-algebra structure 
\(m^*:E\ra{}E\Ten_A{}E\) (i.e., \(m^*\) is a map of pro-\(A\)-algebras. It is 
not, in general, induced from the finite levels).

Likewise, an affine \(\FY\)-scheme corresponds to an \(A\)-algebra \(B\), 
together with a pro-algebra map \(B\ra{}E\Ten_A{}B\) (inducing the identity 
when composed with the base point \(E\ra{}A\)). The map is iterative if it 
makes \(B\) a co-module over \(E\).

\begin{prop}\label{prp:free}
  Let \(\FY\) be a pointed formal set. Then there is a universal map 
  \(\FY\ra\Free{\FY}\) of pointed formal sets, where \(\Free{\FY}\) is a 
  formal monoid. This map identifies iterative \(\Free{\FY}\)-schemes with 
  \(\FY\)-schemes. Likewise, there is a free formal abelian monoid 
  \(\Ab{\FY}\).
\end{prop}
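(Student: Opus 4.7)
My plan is to dualize to the affine setting, construct $\Free{E}$ as a pro-finite bialgebra, and derive the equivalence of categories from its universal property. Since the statement is Zariski-local on $\sX$, I assume $\sX=\spec(A)$ is affine; by~\rpt{pt:affine}, $\FY$ then corresponds to a pointed pro-finite $A$-algebra $(E,\epsilon_E)$ with augmentation ideal $I:=\ker\epsilon_E$, formal monoids correspond to pro-finite $A$-bialgebras, and pointed maps $\FY\to\FM$ correspond to pointed pro-algebra maps between the algebras. The task therefore becomes constructing a pro-finite $A$-bialgebra $\Free{E}$ together with a pointed algebra map $\pi:\Free{E}\to E$ such that composition with $\pi$ yields a natural bijection $\Ehom_{\mathrm{bialg}}(B,\Free{E})\iso\Ehom_{\mathrm{ptd\,alg}}(B,E)$ for every pro-finite bialgebra $B$.

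For the construction, express $E$ as the cofiltered limit of its finite pointed $A$-algebra quotients $E_\alpha$ (with augmentation ideals $I_\alpha$), and for each $\alpha$ take $\Free{E_\alpha}$ to be the pro-finite $A$-bialgebra whose underlying pro-module is $\prod_{n\geq 0}I_\alpha^{\otimes_A n}$, truncated at level $N$ to $\bigoplus_{n=0}^{N}I_\alpha^{\otimes n}$. The comultiplication is deconcatenation: on the $n$-th graded piece it is the sum over $p+q=n$ of the canonical splittings $I_\alpha^{\otimes n}\iso I_\alpha^{\otimes p}\otimes I_\alpha^{\otimes q}$, with counit the projection onto the degree-$0$ summand $A$. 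The multiplication is specified so as to make $\Free{E_\alpha}$ into a bialgebra compatible with deconcatenation—a variant of the classical cofree-bialgebra / shuffle construction, here adapted to the fact that $I_\alpha$ is only a non-unital algebra (as an ideal of $E_\alpha$)—and in the discrete model case $E_\alpha=A^S$ recovers the algebra of functions on the ordinary free monoid $S^*$. Assemble into $\Free{E}$ by passing to the cofiltered limit, and let $\pi:\Free{E}\to E$ be projection onto degrees $\leq 1$, using $E=A\oplus I$.

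For the universal property: given a pointed algebra map $f:B\to E$ from a pro-finite bialgebra $(B,\Delta_B,\epsilon_B)$, its restriction to $J:=\ker\epsilon_B$ lands in $I$. Define the lift $\tilde f:B\to\Free{E}$ componentwise by
\begin{equation}
  \tilde f(b)_n = f^{\otimes n}\bigl(\bar\Delta_B^{\,n-1}(b-\eta_B\epsilon_B(b))\bigr)\quad(n\geq 1),\qquad \tilde f(b)_0=\epsilon_B(b),
\end{equation}
where $\bar\Delta_B^{\,n-1}:J\to J^{\otimes n}$ is the iterated reduced coproduct. Using the bialgebra axioms of $B$ and the explicit description of multiplication and deconcatenation on $\Free{E}$, verify that $\tilde f$ is a bialgebra map with $\pi\circ\tilde f=f$, and is the unique such lift. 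The equivalence of iterative $\Free{\FY}$-schemes with $\FY$-schemes then drops out of the universal property: an $\FY$-action on $\sZ=\spec(R)$, encoded by a pointed $\mu^*:R\to E\otimes_A R$, iterates to maps $\mu_n^*:R\to E^{\otimes n}\otimes_A R$ whose projections onto $I^{\otimes n}\otimes_A R$ assemble into a $\Free{E}$-coaction on $R$, with the comodule axiom holding by construction; the inverse is restriction along $\pi$. The free abelian version $\Ab{\FY}$ is built by the same recipe with $I_\alpha^{\otimes n}$ replaced by the symmetric power $\Sym^{n}I_\alpha$, enforcing cocommutativity of the resulting bialgebra.

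The main obstacle is pinning down the multiplication on $\Free{E}$ so that it is compatible with the deconcatenation coproduct (yielding a genuine bialgebra) and with the pro-finite structure, together with the verification that the formula for $\tilde f$ really defines a bialgebra map; these checks mirror the classical shuffle-algebra calculations but must be carried out in the setting where $I$ carries only a non-unital multiplication inherited from $E$.
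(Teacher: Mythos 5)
Your skeleton is essentially the paper's: reduce to the affine case, realise \(\Free{E}\) as a completed tensor coalgebra with deconcatenation coproduct, lift pointed maps by iterating the (reduced) coproduct, and pass to cofiltered limits. In fact your underlying pro-module is literally the paper's object in split form: the paper defines \(E_n\subseteq E^{\Ten n}\) as the equaliser of the point-insertion maps \(E^{\Ten n}\ra E^{\Ten (n-1)}\), and the splitting \(E=A\oplus I\) identifies \(E_n\) with \(\bigoplus_{k\le n}I^{\Ten k}\), turning the restriction-of-identity comultiplication \(E_{i+j}\ra E_i\Ten E_j\) into deconcatenation, and the paper's lift \(t^{\Ten n}\circ c^{\circ n-1}\) into your formula for \(\tilde f\).

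But there is a genuine gap at exactly the point you defer as ``the main obstacle'': you never define the multiplication on \(\Free{E}\), nor verify the bialgebra axioms, and everything else in your argument leans on it. The multiplication is not a side compatibility --- it is the commutative ring structure without which \(\spec(\Free{E})\) is not a formal scheme, the universal property \(\Ehom_{\mathrm{bialg}}(B,\Free{E})\cong\Ehom_{\mathrm{ptd\,alg}}(B,E)\) has no left-hand side, and your coaction \(R\ra\Free{E}\Ten_A R\) cannot be required to be an algebra map (which is what ``iterative \(\Free{\FY}\)-scheme'' means). The paper closes this gap with one move that your proposal misses but already has the tools for: first establish the \emph{coalgebra-level} universal property (coalgebra maps \(H\ra\Free{E}\) correspond to pointed module maps \(H\ra E\); your formula for \(\tilde f\) uses only the coalgebra structure of \(B\), so it proves precisely this), and then \emph{define} the multiplication as the unique coalgebra map \(\Free{E}\Ten_A\Free{E}\ra\Free{E}\) lifting \(m_E\circ(\pi\Ten\pi)\), where \(m_E\) is the multiplication of \(E\). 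Associativity, commutativity, unitality, the fact that \(\pi\) is an algebra map, and multiplicativity of your lifts \(\tilde f\) then all follow from uniqueness of lifts --- no quasi-shuffle computation (which is what your explicit product would have to be, with contraction terms from the ideal multiplication on \(I\)) is ever needed. As written, your proof stops exactly where the real content begins.

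A second, concrete error: for \(\Ab{\FY}\) you propose replacing \(I^{\Ten n}\) by the quotient \(\Sym^n I\). This fails in positive characteristic: the correct object is the sub-module of symmetric tensors \((I^{\Ten n})^{S_n}\) (divided powers), not the symmetric-power quotient, since one needs the largest cocommutative sub-coalgebra of the cofree coalgebra, and ``cofree'' constructions live on sub-objects, not quotients. Example~\ref{ex:freeadd} exhibits the discrepancy: over \(\FF_p\) the free (and free abelian) monoid algebra on \(\spec(A[\eps])\) satisfies \(e_1^p=0\), which is false in the completed symmetric algebra. On the other hand, your sketch of the equivalence between \(\FY\)-schemes and iterative \(\Free{\FY}\)-schemes (iterating \(\mu^*\) and projecting onto \(I^{\Ten n}\Ten_A R\)) is a point the paper leaves implicit, and, once the multiplication is in place, it is a correct way to finish.
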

\begin{proof}
  It is enough to give an affine construction that localises, since the 
  universal property will ensure the glueing. With notation as above, we 
  first ignore the algebra structure, and view \(E\) as a pro-finite flat 
  \(A\)-module, together with an \(A\)-module map \(p:E\ra{}A\). We produce a 
  co-algebra \(TE\), and a universal map (from a co-algebra) \(\pi:TE\ra{}E\) 
  over \(A\). The construction is dual to that of the tensor algebra.
  
  Let \(E^{\Ten{}n}\) be the \(n\)-fold tensor power of \(E\) over \(A\), and 
  let \(E_n\subseteq{}E^{\Ten{}n}\) be the equaliser of all the maps 
  \(E^{\Ten{}n}\ra{}E^{\Ten{}n-1}\) obtained by tensoring \(p\) with identity 
  maps. \(E_n\) is finite (since \(A\) is Noetherian) and flat over \(A\) 
  (for example, if \(E\) is free, then so is \(E_n\), and the construction 
  localises). The unique map \(E_n\ra{}E^{\Ten{}n-1}\) determined by these 
  maps clearly factors through \(E_{n-1}\), and we set \(TE=(E_i)\).  We let 
  \(\pi\) be the projection on \(E_1=E\). The co-multiplication is given by 
  the map \(E_{i+j}\ra{}E_i\Ten_A{}E_j\) which is the restriction of the 
  identity map. It is clear that this is a co-algebra. To get the (co-) 
  commutative version, simply symmetrise the tensors.

  Given another co-algebra \(H\) and a map \(t:H\ra{}E\) over \(A\), we lift 
  it to a map \(t_n:H\ra{}E_n\) via 
  \(t^{\Ten{}n}\circ{}c^{\circ{}n-1}:H\ra{}E^{\Ten{}n}\), where 
  \(c^{\circ{}n-1}:H\ra{}H^{\Ten{}n}\) is the application of the 
  co-multiplication \(c\) of \(H\) \(n-1\) times. The co-algebra axioms imply 
  that this map factors through \(E_n\), and it is clearly a unique 
  co-algebra map over \(E\). We note also that \(T\) commutes with filtered 
  inverse limits (in pro-finite flat \(A\)-co-algebras). In particular, if 
  \(E\) is given by a system \((E^\alpha)\), then \(TE\) is the inverse limit 
  of the \(TE^\alpha\).

  Finally, assume that \(E\) is a system of algebras. By the remark above, we 
  may assume that \(E\) itself is a finite flat \(A\)-algebra. The 
  multiplication map \(m\) determines a map 
  \(m\circ\pi\Ten\pi:TE\Ten_A{}TE\ra{}E\) over \(A\), hence a co-algebra map 
  \(TE\Ten_A{}TE\ra{}TE\), which is easily seen to be an algebra map.
\end{proof}

\begin{example}
  If \(\FY\) is a discrete set, then the free monoid generated by it 
  coincides with the usual free monoid in the category of sets.
\end{example}

\begin{example}\label{ex:freeadd}
  If \(\FY=\spec(A[\eps])\), as in Example~\ref{ex:dual}, the bi-algebra of 
  the free monoid can be described as follows. Let 
  \(A[[\eps_1,\eps_2,\dots]]\) be the formal power series algebra in 
  countably many variables \(\eps_i\), each satisfying \(\eps_i^2=0\). The 
  symmetric group \(S_\w\) of the natural numbers acts on this algebra, and 
  \(TE\) is the sub-algebra of invariant elements (i.e., symmetric power 
  series). Each element of \(TE\) can be written as \(\sum_{i\in\w}a_ie_i\), 
  where \(e_i\) is the \(i\)-th elementary symmetric power series 
  \(e_i=\sum_{j_1<\dots<j_i}\eps_{j_1}\dots\eps_{j_i}\) in the variables 
  \(\eps_k\). In this presentation, the algebra structure is given by 
  \(e_ie_j=\binom{i+j}{i}e_{i+j}\), and the co-algebra structure is given by 
  \(e_k\mt\sum_{i\le{}k}e_i\Ten{}e_{k-i}\).

  We show, by explicit calculation, that \(TE\) satisfies the required 
  property (this can be contrasted with the calculation in 
  Example~\ref{ex:deriv}).  A map \(d:A\ra{}TE\) may thus be written as 
  \(d(a)=\dd_0(a)+\dd_1(a)e_1+\dots\), where \(\dd_i\) are some maps 
  \(A\ra{}A\). The requirement that the unit (corresponding to the map 
  \(e_i\mt{}0\) for \(i>0\)) acts as the identity means that \(\dd_0(a)=a\).  
  The requirement that \(d\) makes \(A\) a comodule (i.e., that we have a 
  monoid action) means the following: applying \(d\) again to the 
  coefficients of \(d(a)\), we obtain
  \begin{equation}
    d(d(a))=\sum_{i,j\in\w}\dd_i(\dd_j(a))e_i\Ten{}e_j
  \end{equation}
  (recall that we view \(TE\) as a pro-algebra, so the tensor product 
  consists of ``power series'' in the \(e_i\Ten{}e_j\)). Comparing this with 
  the co-algebra structure, we see that \(\dd_{i+j}(a)=\dd_i(\dd_j(a))\).  
  Hence \(d\) is determined by \(\dd_1\). Finally, the statement that \(d\) 
  is an algebra map means that \(\dd_1\) is a derivation (and the product 
  formula makes it consistent for higher \(i\)). In other words, an action of 
  \(\FY\) is precisely the same as a derivation (in any characteristic!), so 
  \(\spec(TE)\) is indeed the free monoid on \(\spec(A[\eps])\).

  The algebra map \(A[[x]]\ra{}A[\eps]\) from the additive formal group 
  induces the bi-algebra map \(f:A[[x]]\ra{}TE\), \(f(x)=e_1\), and when 
  \(A\) contains \(\QQ\), this map is an isomorphism. On the other hand, if 
  \(A\) contains \(\FF_p\), then \(TE\) is generated (as a power series 
  algebra) by the \(e_{p^k}\), with \(e_i^p=0\) for \(i>0\).

  We note that \(e_k^d\) is divisible by \(d!\), and the assignment 
  \(e_k^{(d)}=e_k^d/d!\) determines a divided power structure on \(TE\) (with 
  respect to the ideal generated by all \(e_i\)), which could be called the 
  universal complete divided power \(A\)-algebra in one variable.
\end{example}

\point[Cartier duality]\label{sss:cartier}
The system \(E=(E_i)\) determines a direct system \(\Co{E}=(\Co{E_i})\) of 
finite dimensional co-algebras over \(A\) (\(\Co{E_i}\) is the dual with 
respect to \(A\)).  Since the category of co-algebras over \(A\) is 
equivalent (by taking limits) to the category of ind-finite co-algebras 
(\DM{Prop.~2.3}), this is just a co-algebra over \(A\). A base point 
\(E\ra{}A\) corresponds to an element \(1\in\Co{E}\). A monoid structure then 
corresponds to an algebra structure \(m_*:\Co{E}\Ten_A\Co{E}\ra\Co{E}\), 
which commutes with the co-algebra structure, and which is commutative if the 
original monoid was commutative.

Hence, to a commutative formal monoid \(\FY\) corresponds a commutative 
affine monoid scheme \(\Co{\FY}\), which we call the \Def{Cartier dual} of 
\(\FY\) (this is precisely the usual Cartier duality when \(\FY\) is finite, 
cf~\Cite[2.4]{waterhouse} or~\Cite{pinkfgs}).  Reversing the arguments above, 
we see that conversely, to a commutative affine monoid scheme over \(A\) 
corresponds a commutative formal monoid, and that the two operations are 
inverse to each other. We also note that the \(A\)-points of \(\FY\) can be 
viewed as elements of \(\Co{E}\).

Given an \(\FY\)-scheme corresponding to an \(A\)-algebra \(B\), the 
co-module structure on \(B\) corresponds to a module structure for 
\(\Co{E}\). The fact that the co-module structure is an algebra map means 
that we have the following commutative diagram:

\begin{equation}\label{eq:comod}
  \xymatrix{
  \Co{E}\Ten B\Ten B\ar^{c\Ten 1\Ten 1}[rr]\ar_{1\Ten m}[d] && 
  \Co{E}\Ten\Co{E}\Ten B\Ten B\ar[d]\\
  \Co{E}\Ten B\ar[r]&B&\ar^m[l]B\Ten B
  }
\end{equation}
where \(c\) is the co-multiplication of \(\Co{E}\), and \(m\) is the 
multiplication on \(B\).

\begin{example}\label{ex:cartiergm}
  Assume that \(\FY\) is a discrete commutative monoid \(Y\). Then \(\Co{E}\) 
  is the group algebra \(A[Y]\), with co-algebra structure given by 
  \(y\mt{}y\Ten{}y\) for \(y\in{}Y\). An \(A[Y]\) module is then the same as 
  an action of \(Y\) by \(A\)-linear map. The diagram~\eqref{eq:comod} then 
  means that \(Y\) acts by \(A\)-algebra endomorphisms, i.e., \(Y\) acts on 
  \(\spec(B)\). For 
\end{example}

\begin{example}\label{ex:cartierga}
  Let \(\FY\) be the additive formal group (\(E=A[[x]]\)). Then \(\Co{E}\) is 
  the \(A\)-algebra generated by elements \(u_i\), \(i>0\), with relations 
  \(u_iu_j=\binom{i+j}{i}u_{i+j}\), and co-multiplication 
  \(c(u_n)=\sum_{i\le{}n}u_i\Ten{}u_{n-i}\). In other words, it is the 
  sub-bi-algebra of the algebra of the free monoid on the dual numbers 
  consisting of finite sums (in yet other words, it is the free divided 
  powers algebra in one variable over \(A\)).

  If \(A\) has characteristic \(0\), we get \(\Co{\FY}=\sGa\), the additive 
  group. In particular, a module over \(\Co{E}=A[x]\) is simply an 
  \(A\)-linear action of \(x\). Diagram~\eqref{eq:comod} then reflects that 
  \(x\) is a derivation. Similarly, in characteristic \(p>0\), \(\Co{E}\) is 
  generated by \(u_{p^k}\) for \(k>0\) (with some relations), and an action 
  satisfying~\eqref{eq:comod} corresponds to a sequence of Hasse--Schmidt 
  derivations.
\end{example}

\begin{remark}
  The procedure described in~\ref{sss:cartier} is valid also when the monoid 
  is not commutative, but the resulting algebra \(\Co{E}\) is not 
  commutative, so the geometric interpretation as a scheme is no longer 
  available.
\end{remark}

\begin{remark}
  If \(\FZ\) is a formal monoid acting on \(\FY\) by monoid endomorphisms, 
  then it also acts on \(\Co{\FY}\), making \(\Co{\FY}\) a \(\FZ\)-scheme, on 
  which \(\FZ\) acts by monoid endomorphisms.

  This happens for example if \(\FY\) is (the additive monoid of) a formal 
  semi-ring, and \(\FZ\) is the multiplicative monoid. For instance, if 
  \(\FY\) is the (discrete) ring of integers, then \(\Co{\FY}\) is the 
  multiplicative group, and \(\ZZ\) acts by endomorphisms in the usual way.  
  Likewise, the dual of \(\ZZ[i]\) is \({\sGm}^2\), with \(i(a,b)=(-b,a)\).

  It also happens with the additive formal group, on which the usual 
  derivation acts by group endomorphisms. We therefore get a derivation on 
  the dual, the divided powers algebra, given by \(u_i'=u_{i-1}\).
\end{remark}

\point[The prolongations of affine spaces]
Assume again, as in~\ref{pt:affine}, that we are given a formal monoid 
\(\FY_0\) over an affine scheme \(\spec(A)\), acting on \(\sZ=\spec(\kk)\), 
where \(\kk\) is a field.  Then \(\FY=\FY_0\Ten_A\kk\) is given by a 
projective system \(E=(E_i)\) of finite algebras over \(\kk\). We denote the 
projection and the action maps \(\FY\ra\spec(\kk)\) by \(p\) and \(\mu\), 
respectively.  The correspond to pro-algebra maps \(\kk\ra{}E\) (over \(A\)).

Given a finite dimensional vector space \(V\) over \(\kk\), we let 
\(\Aff{V}=\spec(\Sym(\Co{V}))\) be the associated affine space. Hence, for 
any \(\kk\)-algebra \(B\), the \(B\)-points of \(\Aff{V}\) correspond to 
\(\kk\)-linear maps \(\Co{V}\ra{}B\), i.e., to elements of \(V\Ten_\kk{}B\).

More generally, for a projective system \(V=(V_i)\) of such spaces, 
\(\Aff{V}=(\Aff{V_i})\) is the corresponding pro-scheme. We would like to 
compute the prolongation \(\tau\Aff{V}\) with respect to the given action.  
We denote by \(E\Ten_\mu{}V\) the tensor product over \(\kk\), where \(E\) is 
given a \(\kk\)-structure via \(\mu\). We view it as a vector space over 
\(\kk\) via the map \(p\).

\begin{prop}\label{prp:plngaff}
  For a (pro-) finite-dimensional vector space \(V\) over \(\kk\), 
  \(\tau\Aff{V}=\Aff{E\Ten_\mu{}V}\).
\end{prop}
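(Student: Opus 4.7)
The plan is to unwind both sides as functors on $\Sch[\sZ]$ and identify them via natural bijections. First I would reduce to the case where $V$ is a single finite-dimensional vector space: since everything in sight (the functor $\tau$, the construction $\Aff{-}$, and the tensor $E\Ten_\mu-$) commutes with cofiltered limits on the relevant arguments, the pro-finite case follows from the finite-dimensional one.

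So assume $V$ is finite-dimensional, and let $\sT=\spec(B)$ be an affine test scheme over $\sZ=\spec(\kk)$. By the adjunction description in~\rpt{pt:plngadj},
\begin{equation}
  \tau\Aff{V}(\sT)=\Aff{V}(\mu_!(\FY_0\x\sT)).
\end{equation}
The $\sZ$-ind-scheme $\mu_!(\FY_0\x\sT)$ is $\FY\x_\sZ\sT$ viewed as a $\sZ$-scheme via $\mu\circ\text{pr}_\FY$; its pro-algebra of functions is $E\Ten_\kk B$ (tensor product taken via the $p$-structure on $E$), but regarded as a $\kk$-algebra through the composition $\kk\ra[\mu^*]E\ra E\Ten_\kk B$. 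A $\sZ$-morphism from this to $\Aff{V}=\spec(\Sym\Co{V})$ is, by affineness of $\Aff{V}$, a $\kk$-algebra map $\Sym\Co{V}\ra E\Ten_\kk B$, hence a $\kk$-linear map $\Co{V}\ra E\Ten_\kk B$, where $\kk$ acts on the target via $\mu$.

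Finite-dimensionality of $V$ lets me rewrite this hom set as $V\Ten_\kk(E\Ten_\kk B)$, where the first tensor uses the $\mu$-structure on $E$ and the second tensor uses the $p$-structure on $E$. Associating these two tensor products gives $(V\Ten_\mu E)\Ten_\kk B = (E\Ten_\mu V)\Ten_\kk B$, the $\kk$-structure on $E\Ten_\mu V$ now being the one inherited from the $p$-structure on $E$. By the definition of $\Aff{-}$ recalled just before the proposition, this is precisely $\Aff{E\Ten_\mu V}(\sT)$. The assignments are natural in $\sT$, so the isomorphism $\tau\Aff{V}\iso\Aff{E\Ten_\mu V}$ follows from Yoneda on $\Pro{\Sch[\sZ]}$.

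The only real obstacle is bookkeeping of the two $\kk$-structures on $E$ (via $p$ and via $\mu$): every tensor symbol must be decorated with which structure is being used on which factor, and the identification $V\Ten_\kk(E\Ten_\kk B)\cong(E\Ten_\mu V)\Ten_\kk B$ is the point where the roles of $p$ and $\mu$ are swapped on the two factors. Everything else—the reduction to affine $\sT$, the passage from the finite-dimensional to the pro-finite-dimensional case, and the verification of naturality—is routine.
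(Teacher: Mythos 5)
Your proposal is correct and follows essentially the same route as the paper's own proof: reduce to the case where \(E\) and \(V\) are finite, test on \(B\)-points via the adjunction in~\rpt{pt:plngadj}, identify the function algebra of \(\mu_!(\FY_0\x\sT)\) as \(E\Ten_\kk B\) with \(\mu\)-twisted \(\kk\)-structure, and re-associate the tensor product \((B\Ten_\kk E)\Ten_\mu V = B\Ten_\kk(E\Ten_\mu V)\) to read off the \(B\)-points of \(\Aff{E\Ten_\mu V}\). Your bookkeeping of the two \(\kk\)-structures on \(E\) is exactly the point the paper's proof turns on as well.
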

\begin{proof}
  It is enough to prove that for any \(\kk\)-algebra \(B\), the two 
  pro-schemes have the same \(B\)-points. Also, it suffices to prove the 
  statement when \(E\) and \(V\) are finite.
  
  By~\ref{pt:plngadj}, the \(B\)-points of \(\tau\Aff{V}\) correspond to the 
  \(B_E=B\Ten_\kk{}E\) points of \(\Aff{V}\), where the tensor product is 
  taken with respect to the \(\kk\)-vector space structure on \(E\) given by 
  \(p\), but the \(\kk\)-structure on \(B_E\) is given by \(\mu\). Hence, by 
  the above discussion, they correspond to elements of 
  \((B\Ten_\kk{}E)\Ten_\mu{}V=B\Ten_\kk(E\Ten_\mu{}V)\). Again by the same 
  discussion, these elements correspond to the \(B\)-points of 
  \(\Aff{E\Ten_\mu{}V}\).
\end{proof}
  
\begin{remark}
  It is easy to describe the action \(\FY\x\tau\Aff{V}\ra\tau\Aff{V}\) in 
  these terms: it suffices to give an (ind-pro-) vector space map 
  \(\Co{E}\Ten_\kk{}E\Ten_\mu{}V\ra{}E\Ten_\mu{}V\). The map is given by the 
  ``transpose'' \(m^t:\Co{E}\Ten_\kk{}E\ra{}E\) of the co-algebra map 
  \(m:E\ra{}E\Ten_\kk{}E\).
\end{remark}

\section{Tannakian Categories}\label{sec:main}
We now arrive at the main point, the description of the category of 
representations of a linear group. The description is completely analogous to 
the one given in~\Cite[4]{tannakian} in the special case of differential 
fields. However, the proof is simpler, since we reduce to the algebraic case, 
instead of mimicking its proof.

\subsection{Linear groups}\label{ssc:lingp}
We fix a base action \(\FY_0\x\sZ\ra\sZ\) with \(\sZ=\spec(\kk)\), \(\kk\) a 
field, and work in the category of \(\FY_0\)-pro-schemes over \(\sZ\) (as 
before, \(\FY_0\), \(\sZ\) and all maps, products, etc. are over some base 
ring \(\kk_0\), which we generally omit from the notation. \(\FY_0\) is 
assumed to be quasi-separable over \(\kk_0\)).  We set \(\FY=\FY_0\x\sZ\).  
As explained in~\ref{pt:prolong}, each scheme \(\sX\) over \(\sZ\) (in the 
usual sense) determines an \(\FY\)-pro-scheme \(\tau\sX\).  Since \(\tau\) 
has a left-adjoint, it preserves products. In particular, a group pro-scheme 
\(\sG\) over \(\sZ\) determines a group object \(\tau\sG\) in the category of 
\(\FY\)-pro-schemes over \(\sZ\) (we call these \Def{\(\FY\)-groups} from now 
on).

\begin{defn}
  Let \(\sG\) be an \(\FY\)-group. A \Def{representation} of \(\sG\) is a map 
  (of \(\FY\)-groups) \(\sG\ra\tau\GL(V)\) for some finite dimensional 
  \(\kk\)-vector space \(V\). As customary, we sometimes write \(V\) for the 
  whole representation. A representation is \Def{faithful} if it is a closed 
  embedding. The group \(\sG\) is \Def{linear} if it admit a faithful 
  representation.
\end{defn}

\point
We note that already in the differential case, there are affine groups that 
are not linear (\Cite{cassidy2}), so the definition is reasonable. We also 
note that we have a slight discrepancy with the terminology of~\DM{Cor.~2.5}.

Given an \(\FY\)-group scheme \(\sG\), we denote by \(\Alg{\sG}\) the 
underlying group-pro-scheme. If \(\sG\) is a linear \(\FY\)-group, the 
category \(\Rep_\sG\) of representations of \(\sG\) is abelian and 
\(\kk\)-linear in the usual way.  With the usual tensor structure, it is a 
rigid tensor category. The forgetful functor shows it is neutral Tannakian.  
We have the following simple observation.

\begin{prop}\label{prp:underly}
  Let \(\sG\) be a linear \(\FY\)-group. The algebraic group associated to 
  the Tannakian category \(\Rep_\sG\) is \(\Alg{\sG}\).
\end{prop}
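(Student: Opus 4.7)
The plan is to identify $\Rep_\sG$ with $\Rep_{\Alg{\sG}}$ as a neutral Tannakian category (with its forgetful fibre functor), and then invoke the classical reconstruction theorem of Saavedra.

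The key tool is the adjunction from \rpt{pt:prolong}: $\tau$ is right adjoint to the forgetful functor from $\FY$-pro-schemes to pro-schemes. Applied to $\sG$ and $\GL(V)$, this gives a natural bijection between maps of $\FY$-pro-schemes $\sG\ra\tau\GL(V)$ and maps of pro-schemes $\Alg{\sG}\ra\GL(V)$. Since $\tau$ is a right adjoint, it preserves the finite products used to formulate the group axioms, so the map on the $\FY$-side is a group homomorphism if and only if the corresponding map on the plain side is. Hence a representation of $\sG$ in the sense of \Sec{ssc:lingp} is precisely a homomorphism $\Alg{\sG}\ra\GL(V)$, i.e., an algebraic representation of $\Alg{\sG}$ on $V$.

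Next I would promote this bijection on objects to a tensor equivalence $\Rep_\sG\iso\Rep_{\Alg{\sG}}$. A morphism $V\ra W$ in $\Rep_\sG$ is a $\kk$-linear map whose graph is an $\FY$-subscheme of $\tau(V\oplus W)$ compatible with the two representations; applying the adjunction and the functoriality of $\tau$ shows this is the same datum as a morphism of $\Alg{\sG}$-representations. For the tensor structure, the compatibility of $\tau$ with products (and the fact that $\GL(V)\x\GL(W)\ra\GL(V\Ten W)$ is a group map) means that the tensor product of two $\FY$-representations is carried to the ordinary tensor product of the underlying $\Alg{\sG}$-representations; the unit and associativity/commutativity constraints are inherited from those of $\Vec_\kk$ on both sides, so the equivalence is monoidal (and in fact rigid, since both categories are).

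Finally, the forgetful functor $\w:\Rep_\sG\ra\Vec_\kk$ is, under this identification, the standard forgetful functor $\Rep_{\Alg{\sG}}\ra\Vec_\kk$, so the two neutral Tannakian structures coincide. By the classical Tannakian reconstruction (\DM{Thm.~2.11}), the group scheme associated to $(\Rep_{\Alg{\sG}},\w)$ is $\Alg{\sG}$ itself, which gives the proposition.

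The main obstacle is really only bookkeeping: one must be careful that the adjunction (\rpt{pt:prolong}), stated for pro-objects, transports group structures and tensor products correctly, and that nothing is lost in passing from $\FY$-schemes to pro-schemes in the group case (here linearity of $\sG$ is what keeps us inside the finite-dimensional representation category on both sides). Once this formal compatibility is checked, the result reduces immediately to the classical case.
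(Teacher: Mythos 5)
Your proposal is correct and takes essentially the same approach as the paper: the paper's (much terser) proof also invokes the adjunction of \rpt{pt:prolong}, observes that the functors involved are left exact so the adjunction descends to group objects, and applies this to \(\sG\ra\tau\GL(V)\) to identify representations of \(\sG\) with representations of \(\Alg{\sG}\), whence the classical reconstruction gives \(\Alg{\sG}\). Your write-up simply makes explicit the bookkeeping (morphisms, tensor structure, identification of the fibre functors) that the paper leaves implicit.
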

\begin{proof}
  By~\ref{pt:prolong}, \(\tau\) is right adjoint to the forgetful functor. 
  Since all functors involved are left exact, we get a similar result for 
  groups. Applying this to the map \(\sG\ra\tau\GL(V)\), we get the result.
\end{proof}

\point
Our goal is thus to describe an additional structure on \(\cC=\Rep_\sG\) that 
will allow us to recover the action of \(\FY\). We pass back to algebra: let 
\(E\) be the pro-algebra corresponding to \(\FY\x\sZ\). We ignore, at first, 
the monoid structure on \(\FY\), and so deal with each piece separately.  
Thus, we assume that \(E\) is a finite algebra. The projection and action 
maps are denoted by \(p,\mu:\spec(E)\ra\spec(\kk)\), respectively.

\point
Recall from~\ref{pt:pullbacks}, that given a map \(f:\spec(E)\ra\spec(\kk)\), 
there is a pullback functor \(f^*:\cC\ra\Lmod{E}\), given by 
\(f^*(\oX)=E\Ten_\kk\oX\). We note that in the present situation, the functor 
is defined even if \(f\) is not finite. When \(f\) is finite, \(f^*\) has a 
right adjoint, \(f_*\) given by viewing an \(E\)-module as a \(\kk\)-vector 
space via \(f\) (in general \(f_*\) is defined as a functor into 
\(\Ind{\cC}\), but we will not need it).  \(f^*\) is a tensor functor, and we 
have an internal version of the adjunction:
\begin{equation}
  f_*(\Hom_E(f^*(\oX),\oY))=\Hom(\oX,f_*(\oY))
\end{equation}
for any object \(\oX\) and \(E\)-module \(\oY\). We also have an isomorphism
\begin{equation}
  f_*(f^*(\oX)\Ten_E\oY)=\oX\Ten_\kk{}f_*(\oY)
\end{equation}

Assume that \(f\) is finite. Then \(f_*\) also has a right adjoint 
\(f^!:\cC\ra\Lmod{E}\), given by \(f^!(\oX)=\Hom_\kk(E,\oX)\). We note that 
\(f^!(\1)=\Co{E}\), where duality is with respect to \(f\). More generally, 
\(\Co{M}=\Hom_E(M,f^!(\1))\) for a finite \(E\)-module \(M\), and we may 
prefer the second notation to stress the dependence on \(f\). We thus have, 
for any object \(\oX\),  an isomorphism (in \(\Lmod{E}\)), as in 
Proposition~\ref{prp:pullbacks}
\begin{equation}
  f^!(\Co{\oX})=\Hom_E(f^*(\oX),f^!(\1))=\Co{(f^*(\oX))}
\end{equation}

In particular, \(f^*(\oX)\) is \(E\)-flat and \(f^!(\oX)\) is \(E\)-injective 
(as \(E\)-modules, disregarding the action of \(\sG\)). Using the identities 
above, we again have an internal version of the adjunction:
\begin{equation}
  \Hom_\kk(f_*(\oX),\oY)=f_*(\Hom_E(\oX,f^!(\oY)))
\end{equation}

Viewing \(f^*\) as a functor to the category \(\Flat{E}\) of flat 
\(E\)-modules in \(\cC\), \(f^*\) also has a left adjoint, 
\(f_!:\Flat{E}\ra\cC\), given by
\begin{equation}
  f_!(\oX)=f_*(f^!(\1)\Ten_E\oX)=\Co{E}\Ten_E\oX
\end{equation}
(this is obviously true when \(\oX\) is free, hence when \(\oX\) is flat by 
localisation). We have, by definition,
\begin{equation}
  f_!f^*=f_*f^!
\end{equation}
We note \(f_!(\oX)\) has, in fact, the structure of an \(E\)-injective 
module.  We also note that dually, the functor \(f^!:\cC\ra\Plng{E}\) has a 
right adjoint \(f_\#:\Plng{E}\ra\cC\), defined by 
\(f_{\#}(\oX)=\Hom_E(f^!(\1),\oX)\), but we will not use it.

\point
We would like to apply the discussion above to the maps \(p\) and \(\mu\).  
Note that \(p\), but not necessarily \(\mu\), is finite. We will be 
interested in the functor
\begin{equation}
  \tau(\oX)=p_!(\mu^*(\oX))=\Co{E}\Ten_\mu\oX
\end{equation}
which we view as a functor into either \(\cC\) or \(\Plng{E}\).  Our interest 
in this functor is explained by Proposition~\ref{prp:plngaff}, and the 
following fact.

\begin{lemma}\label{lem:plngbase}
  Let \(\oX\) be a representation, and let \(M\) be an \(E\)-module.
  \begin{enumerate}
    \item There is a canonical isomorphism 
      \(\Co{\tau(\oX)}=p_*\mu^*(\Co{\oX})\) as \(E\)-modules.
    \item If \(M\) is a flat (or injective) \(E\)-module, then 
      \(M\Ten_E{}\mu^*(\oX)\) is \(E\)-flat (\(E\)-injective) in 
      \(\Rep_\sG\).
  \end{enumerate}
\end{lemma}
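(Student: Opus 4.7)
For part~(1), I unwind the definition $\tau(\oX) = \Co{E} \Ten_\mu \oX$ and dualize directly. A $\kk$-linear functional on $\Co{E} \Ten_\mu \oX$ (with $\kk$-structure via $p$) corresponds, by the universal property of $\Ten_\mu$ and the finite-dimensional duality $\Co{\Co{E}} = E$, to a $\kk$-linear map $\oX \to E$ whose scalar action on the target factors through $\mu$; the space of such maps is precisely $E \Ten_\mu \Co{\oX} = \mu^*(\Co{\oX})$. A direct check verifies that the $E$-module structure on $\Co{\tau(\oX)}$ (dual to the action of $E$ on $\Co{E}$) matches the natural $E$-action on the first factor of $\mu^*(\Co{\oX})$, yielding the claimed identification as $E$-modules (the $p_*$ in the statement being the forgetful functor, indicating both sides are to be compared as objects of $\cC$ equipped with their $E$-actions).

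For the flat case of part~(2), the key observation is that for any $E$-module $N$ in $\cC$ one has a canonical identification $N \Ten_E \mu^*(\oX) = N \Ten_\mu \oX$, obtained by absorbing the $E$-factor of $\mu^*(\oX) = E \Ten_\mu \oX$ into the $E$-action on $N$. Taking $N = M$ gives $M \Ten_E \mu^*(\oX) = M \Ten_\mu \oX$. To show this is $E$-flat, apply associativity:
\[ N \Ten_E \bigl(M \Ten_E \mu^*(\oX)\bigr) = (N \Ten_E M) \Ten_E \mu^*(\oX) = (N \Ten_E M) \Ten_\mu \oX, \]
which presents the functor $N \mapsto N \Ten_E \bigl(M \Ten_E \mu^*(\oX)\bigr)$ as the composition of $N \mapsto N \Ten_E M$ (exact because $M$ is a flat $E$-module in the usual sense) with $P \mapsto P \Ten_\mu \oX$ (exact because tensoring over $\kk$ with a fixed vector space is exact); the composite is exact, establishing $E$-flatness.

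For the injective case, I dualize using the evident generalization of part~(1) (with $M$ in place of $\Co{E}$): $\Co{M \Ten_E \mu^*(\oX)} \cong \Co{M} \Ten_E \mu^*(\Co{\oX})$, proved by the same direct calculation. Since $M$ being $E$-injective forces $\Co{M}$ to be $E$-flat by Proposition~\ref{prp:injflat}, the flat case just established shows the right-hand side is $E$-flat, and hence $M \Ten_E \mu^*(\oX)$ is $E$-injective. The main technical obstacle throughout is the careful bookkeeping of the two distinct $\kk$-structures on $E$ (via $p$ and via $\mu$) together with the $E$-module structure as one passes through dualities and tensor-product manipulations; once the correct interpretation of $\Ten_\mu$ is fixed the calculations are routine, but the notational discipline is essential.
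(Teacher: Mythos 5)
Your argument is correct. In part (1) you essentially reproduce the paper's proof: your space of \(\kk\)-linear maps \(\oX\ra E\) that are semilinear through \(\mu\) is, by adjunction, exactly the module \(\Hom_E(\mu^*(\oX),E)\) through which the paper identifies both sides, and your concluding identification of that space with \(E\Ten_\mu\Co{\oX}\) is the paper's dimension count. In part (2) you diverge: the paper's proof is a one-liner, observing that as an \(E\)-module (forgetting the \(\sG\)-action) \(\mu^*(\oX)\) is free of finite rank, so \(M\Ten_E\mu^*(\oX)\) is a finite direct sum of copies of \(M\), and flatness and injectivity pass to finite direct sums (exactness in \(\Rep_\sG\) being detected on underlying vector spaces). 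You instead keep everything canonical: for flatness you factor \(N\mt N\Ten_E(M\Ten_E\mu^*(\oX))=(N\Ten_E M)\Ten_\mu\oX\) as a composition of two exact functors --- note that the exactness of the second is Proposition~\ref{pro:ladj}, since \(\oX\) is an object of \(\cC\) and not merely a vector space --- and for injectivity you dualize through \(\Co{(M\Ten_E\mu^*(\oX))}\cong\Co{M}\Ten_E\mu^*(\Co{\oX})\), a valid extension of part (1) (it follows, for instance, from equation~\eqref{eq:dual} together with part (1) and the projectivity of \(\mu^*(\oX)\), or from your semilinear-functional computation plus a dimension count). Your route is longer but basis-free and \(\sG\)-equivariant throughout; the paper's is shorter at the cost of implicitly invoking the fact that flatness and \(E\)-injectivity in \(\Rep_\sG\) may be checked after forgetting the group action.
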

\begin{proof}
  \begin{enumerate}
    \item We claim that both sides are isomorphic (as \(E\)-modules) to the 
      space \(\Hom_E(\mu^*(\oX),E)\). For the left side, this follows 
      directly from the adjunction. For the right side, let 
      \(\phi\in\Co{\oX}\). Then \(\mu^\#\circ\phi\) is a map from \(\oX\) to 
      \(E\), linear with the respect to the vector-space structure on \(E\) 
      given by \(\mu^\#\) (the algebra map corresponding to \(\mu\)). This is 
      equivalent to a map \(\mu^*(\oX)\ra{}E\) of \(E\)-modules, so we have a 
      map \(\Co{\oX}\ra\Hom_E(\mu^*(\oX),E)\), which is again 
      \(\mu^\#\)-linear.  We obtain an \(E\)-module map 
      \(\mu^*(\Co{\oX})\ra\Hom_E(\mu^*(\oX),E)\), which is an isomorphism by 
      dimension.
    \item This is clear, since \(M\Ten_E\mu^*(\oX)\) is a (finite) direct sum 
      of copies of \(M\).\qedhere
  \end{enumerate}
\end{proof}

\begin{remark}\label{rmk:dualc}
  As a result of this Lemma, we could, instead, work with the functor 
  \(\oX\mt{}p_*\mu^*\oX\), which is a tensor functor into \(\Flat{E}\), and 
  is perhaps more familiar. We choose to use the current setting mostly since 
  it is compatible with the original setup of~\Cite[4]{tannakian}, and also 
  because in our current setting, \(\tau(\Co{\oX})\) has a simple 
  interpretation as consisting of functions on \(\oX\) (as in 
  Proposition~\ref{prp:plngaff}). We discuss this again in the abstract 
  setting of the next section, in Remark~\ref{rmk:dual}.
\end{remark}

We now describe the properties of the functor \(\tau\). Eventually, we will 
use these properties to characterise the situation.

\begin{prop}
  The functor \(\tau\) is naturally a tensor functor from \(\Rep_\sG\) to 
  \(\Plng[\Rep_\sG]{E}\).
\end{prop}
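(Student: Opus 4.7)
The plan is to split the claim in two: first, showing $\tau(\oX)$ lies in $\Plng{E}$; second, equipping $\tau$ with a natural tensor structure.

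For the first part, I would rewrite $\tau(\oX)=\Co{E}\Ten_\mu\oX=\Co{E}\Ten_E\mu^*(\oX)$, using $\mu^*(\oX)=E\Ten_\mu\oX$ together with the cancellation $\Co{E}\Ten_E E=\Co{E}$. Since $E$ is free of rank one over itself it is $E$-flat, so $\Co{E}$ is $E$-injective by Proposition~\ref{prp:injflat}. Applying Lemma~\ref{lem:plngbase}(2) with $M=\Co{E}$ gives that $\tau(\oX)$ is $E$-injective, hence an object of $\Plng{E}$.

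For the tensor structure, I would construct the comparison map $\tau(\oX\Ten\oY)\ra\tau(\oX)\Ten^E\tau(\oY)$ directly from the coalgebra structure on $\Co{E}$. The multiplication on $E$ dualises to a comultiplication $c:\Co{E}\ra\Co{E}\Ten\Co{E}$. Because $E$ is commutative, for any $\phi\in\Co{E}$, $e\in E$ and $a,b\in E$ one computes $\bigl((e\Ten 1)\cdot c(\phi)\bigr)(a,b)=\phi(eab)=\phi(aeb)=\bigl((1\Ten e)\cdot c(\phi)\bigr)(a,b)$, so by Lemma~\ref{lem:dual} the map $c$ factors through $\Co{E}\Ten^E\Co{E}$. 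Tensoring this factored $c$ with the identity of $\oX\Ten\oY$ over $\mu$ and reshuffling tensor factors yields the desired natural map. The unit comparison is immediate: $\tau(\1)=\Co{E}\Ten_\mu\1=\Co{E}$, the unit of $\Plng{E}$ by Proposition~\ref{prp:modten}; associativity and commutativity follow from coassociativity and cocommutativity of $(\Co{E},c)$ combined with those of $(\Cc,\Ten)$.

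The main obstacle is showing this comparison map is an isomorphism. The cleanest argument uses the tensor equivalence $\Plng{E}\simeq\Op{\Flat{E}}$ of Corollary~\ref{cor:proflatop}: by Lemma~\ref{lem:plngbase}(1), $\tau$ corresponds under this equivalence to the functor $\sigma:\oX\mt p_*\mu^*(\oX)$ landing in $\Flat{E}$. The decomposition $\sigma=p_*\circ\mu^*$ identifies the tensor structure: $\mu^*$ is a tensor functor $(\Cc,\Ten)\to(\Lmod{E},\Ten_E)$ because $\mu^\#:\kk\ra E$ is an algebra map, giving the base-change isomorphism $\mu^*(\oX)\Ten_E\mu^*(\oY)\iso\mu^*(\oX\Ten\oY)$ (which reduces, via Proposition~\ref{prp:deligne1}, to the vector-space case); and $p_*$ preserves the $E$-action and commutes with $\Ten_E$, since it only alters the $\kk$-structure. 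Transferring this tensor structure back along the equivalence matches the directly defined comparison map above, and in particular proves its bijectivity.
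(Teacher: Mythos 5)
Your proof is correct, but the way you obtain the tensor structure is genuinely different from the paper's. The paper stays on the injective side: it produces the structure isomorphism \(\tau(U\Ten V)\cong\Hom_E(\Co{\tau(U)},\tau(V))=\tau(U)\Ten^E\tau(V)\) (via Proposition~\ref{prp:modten}) by computing both sides down to the common object \(\mu^*(U)\Ten_E\tau(V)\) — the left side directly from the definitions, the right side via Lemma~\ref{lem:plngbase} together with the identification \(\Hom_E(\mu^*(\Co{U}),E)\cong\mu^*(U)\) — and leaves the coherence checks as ``straightforward''. You instead dualise: you identify \(\Co{\tau}=p_*\mu^*\) as a functor into \(\Flat{E}\), note it is a tensor functor there for standard reasons (\(\mu^*\) is base change along the algebra map \(\mu^\#\), and \(p_*\) only changes the \(\kk\)-structure), and transfer the structure back through the tensor equivalence \(\Plng{E}\simeq\Op{\Flat{E}}\) of Corollary~\ref{cor:flatop} (the precise citation at this stage, since \(E\) is finite here, though Corollary~\ref{cor:proflatop} subsumes it). This is exactly the alternative the paper itself advertises in Remarks~\ref{rmk:dualc} and~\ref{rmk:dual} but does not follow in its proof. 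Your route makes the existence and coherence (unit, associativity, commutativity) of the tensor structure formal, being inherited along a composite of tensor functors and a tensor equivalence — a genuine gain over the paper's unverified coherence step; the paper's route, in exchange, never invokes the duality machinery and exhibits the isomorphism by direct identification. One caveat: your claim that the transferred structure coincides with your explicit comparison map built from the comultiplication \(c:\Co{E}\ra\Co{E}\Ten^E\Co{E}\) is asserted rather than verified — it does hold, since dualising the multiplication-induced isomorphism \(\mu^*(\oX)\Ten_E\mu^*(\oY)\cong\mu^*(\oX\Ten\oY)\) yields precisely the factored comultiplication — but the proposition does not actually need this agreement, as the duality transfer alone already equips \(\tau\) with a natural tensor structure. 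Finally, your first paragraph (membership in \(\Plng{E}\) via Proposition~\ref{prp:injflat} and Lemma~\ref{lem:plngbase}) simply spells out what the paper's ``explained above'' refers to, so that part matches the paper.
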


We note that \(\tau\) is not \(\kk\)-linear.

\begin{proof}
  The fact that \(\tau\) takes values in \(\Plng[\Rep_\sG]{E}\) is explained 
  above. To give \(\tau\) a tensor structure, we need to provide functorial 
  (\(E\)-module) isomorphisms 
  \(\tau(U\Ten_\kk{}V)=\Hom_E(\Co{\tau(U)},\tau(V))\) 
  (Proposition~\ref{prp:modten}). The left hand side is isomorphic to 
  \(\mu^*(U)\Ten_E\tau(V)\) (directly from definitions), while by 
  Lemma~\ref{lem:plngbase}, the right hand side is isomorphic to
  \begin{equation}
    \Hom_E(\mu^*(\Co{U}),\tau(V))=\Hom_E(\mu^*(\Co{U}),E)\Ten_E\tau(V)=\mu^*(U)\Ten_E\tau(V)
  \end{equation}
  The verification that this is a tensor structure is straightforward.
\end{proof}

We now wish to change the algebra.

\begin{prop}\label{prp:algchange}
  Assume that \(E_1\) and \(E_2\) are two rings with maps \(p_1,\mu_1\) and 
  \(p_2,\mu_2\) as above, and corresponding functors \(\tau_1\) and 
  \(\tau_2\). Assume, further, that we are given a ring map \(f:E_1\ra{}E_2\) 
  that preserves both \(\kk\)-algebra structures. Then for any representation 
  \(V\) we have an isomorphism of \(E_2\)-modules 
  \(\Hom_{E_1}(E_2,\tau_1(V))=\tau_2(V)\), and together these isomorphisms 
  determine an isomorphism of tensor functors.
\end{prop}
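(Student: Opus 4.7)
The plan is to compute $\Hom_{E_1}(E_2, \tau_1(V))$ directly from the decomposition $\tau_i(V) = p_{i,!}(\mu_i^*(V)) = \Co{E_i}\Ten_{E_i}\mu_i^*(V)$, and to observe that the two halves of the hypothesis on $f$ identify the $\mu_i^*(V)$ factor and the $\Co{E_i}$ factor respectively. Concretely, from $f$ respecting the action structure one gets $\mu_2^*(V) = E_2\Ten_{E_1}\mu_1^*(V)$, i.e., $\mu_2^* = f^*\mu_1^*$ in the notation of~\ref{pt:pullbacks}. From $f$ respecting the projection structure, Hom-tensor adjunction yields a canonical $E_2$-module identification
\[
\Hom_{E_1}(E_2,\Co{E_1}) = \Hom_{E_1}(E_2,\Hom_\kk(E_1,\kk)) = \Hom_\kk(E_2\Ten_{E_1}E_1,\kk) = \Co{E_2},
\]
the residual $\kk$-structure on $E_2$ being automatically the one coming from $p_2$.

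These two identifications assemble into the chain
\begin{align*}
\Hom_{E_1}(E_2,\tau_1(V))
&= \Hom_{E_1}(E_2,\Co{E_1}\Ten_{E_1}\mu_1^*(V)) \\
&\cong \Hom_{E_1}(E_2,\Co{E_1})\Ten_{E_1}\mu_1^*(V) \\
&= \Co{E_2}\Ten_{E_1}\mu_1^*(V) = \Co{E_2}\Ten_{E_2}\mu_2^*(V) = \tau_2(V).
\end{align*}
The key middle step is the standard Hom-tensor commutation, legitimate because $\mu_1^*(V)$ is $E_1$-flat (Proposition~\ref{prp:pullbacks}) and $E_2$ is a finitely presented $E_1$-module (being finite over $\kk$). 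Every step is natural in $V$ and $\sG$-equivariant, so this defines a natural isomorphism $f^!\circ\tau_1 \Isom \tau_2$ of functors $\Rep_\sG \to \Plng{E_2}$.

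For the upgrade to a tensor isomorphism, use the description $\tau_i(U\Ten V) \cong \mu_i^*(U)\Ten_{E_i}\tau_i(V)$ coming from the tensor structure on $\tau_i$ in the preceding proposition; applying $f^!$ to this and repeating the same Hom-tensor manipulations (now with $\mu_1^*(U)\Ten_{E_1}\tau_1(V)$ in place of $\Co{E_1}\Ten_{E_1}\mu_1^*(V)$) gives a compatible identification on both sides of the structural isomorphism. The main obstacle is bookkeeping: the Hom-tensor commutation must respect the $E_i$-module structures and the $\sG$-equivariance simultaneously, and one must carefully keep the two distinct $\kk$-algebra structures on each $E_i$ (via $p_i$ and $\mu_i$) straight throughout, since it is precisely the distinction between them that drives the whole computation.
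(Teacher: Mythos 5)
Your proof is correct and takes essentially the same route as the paper's: both use the decomposition \(\tau_i(V)=\Co{E_i}\Ten_{E_i}\mu_i^*(V)\) together with \(\mu_2^*(V)=E_2\Ten_{E_1}\mu_1^*(V)\) to reduce to the key identity \(\Hom_{E_1}(E_2,\Co{E_1})=\Co{E_2}\), which the paper dismisses as ``obvious, by taking duals'' and you verify explicitly via Hom-tensor adjunction. Your added justifications (Hom-tensor commutation via flatness of \(\mu_1^*(V)\) and finite presentation of \(E_2\), the tracking of the two \(\kk\)-structures, and the tensor-functor compatibility) simply make explicit the steps the paper leaves implicit.
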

\begin{proof}
  Using \(\tau_i(V)=\tau_i(\1)\Ten_{E_i}\mu_i^*(V)\) (as in the previous 
  proof), and \(\mu_2^*(V)=E_2\Ten_{E_1}\mu_1^*(V)\), we reduce to the case 
  \(V=\1\). Hence, we need to prove that 
  \(\Hom_{E_1}(E_2,\Co{E_1})=\Co{E_2}\) (as \(E_2\)-modules). But this is 
  obvious, by taking duals.
\end{proof}

\point
We recall that \(\FY\) was assumed to have a base point, which acts as the 
identity. In other words, we are also given a map 
\(i:\spec(\kk)\ra\spec(E)\), such that \(\mu\circ{}i=p\circ{}i\). The map 
\(i\) induces, as before, a functor \(i^!:\Plng{E}\ra\cC\), 
\(i^!(\oX)=\Ehom_E(\kk,\oX)\) (see also~\ref{pt:pullbacks}; geometrically, 
\(i^!(\oX)\) consists of sections of \(\oX\) supported at the base point).

The functor \(i^!\) extends, as in~\ref{pt:pbpro}, to \(\Plng{E}\) when \(E\) 
is a pro-finite algebra. Applying the previous proposition we obtain, upon 
passing to inverse systems, the following result.

\begin{cor}\label{cor:erep}
  Let \(\FY=\FY_0\x\sZ\) be a formal set (with \(\sZ=\spec(\kk)\)), let 
  \(p:\FY\ra\sZ\) be the projection, \(i:\sZ\ra\FY\) a base point (section of 
  \(p\)), and let \(\mu:\FY\ra\sZ\) be a map (action), such that 
  \(\mu\circ{}i\) is the identity. The definitions above determine a tensor 
  functor \(\tau:\cC=\Rep_\sG\ra\Plng{E}\), and a (tensor) isomorphism 
  \(i^*(\tau(V))=V\).
\end{cor}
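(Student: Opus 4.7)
The plan is to bootstrap from the finite case treated above, assembling the finite-level functors $\tau_\alpha$ into a single functor $\tau$ on the pro-finite algebra via the inverse-limit definition of $\Plng{E}$. Proposition~\ref{prp:algchange} is the crucial compatibility that makes the assembly possible.

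First, write $E = \varprojlim_{\alpha \in \cI} E_\alpha$ as the pro-system of finite $\kk$-algebras associated to $\FY$, so that $p$, $\mu$ and the base point $i$ correspond to compatible systems $p_\alpha, \mu_\alpha, i_\alpha$ at each finite level. For each $\alpha$, the discussion above produces a tensor functor $\tau_\alpha = (p_\alpha)_!\, \mu_\alpha^{*} : \cC \to \Plng{E_\alpha}$, with $\tau_\alpha(V) = \Co{E_\alpha} \Ten_{\mu_\alpha} V$ as an $E_\alpha$-injective object.

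Next, the transition maps $f_{\alpha\beta}: E_\alpha \to E_\beta$ of the system respect both the $p$- and $\mu$-structures by construction, so Proposition~\ref{prp:algchange} provides natural isomorphisms $f_{\alpha\beta}^!(\tau_\alpha(V)) \cong \tau_\beta(V)$, which are functorial in $V$, compatible with compositions, and compatible with the tensor structures. Under the identification of $f_{\alpha\beta}^!$ with the pullback functor between the fibres of the fibred category $\cC^i$ of~\ref{pt:pullbacks}, the family $(\tau_\alpha(V))_\alpha$ is a Cartesian section of $\cC^i$ over $\cI$, and Definition~\ref{def:prolongp} therefore produces a well-defined object $\tau(V) \in \Plng{E} = \Lim_\cI \cC^i$. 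Functoriality in $V$ gives the functor $\tau: \cC \to \Plng{E}$.

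The tensor structure passes to the limit because the tensor product on $\Plng{E}$ is defined fibre-wise, as recalled in~\ref{pt:limten}, and the finite-level tensor structures on $\tau_\alpha$ are intertwined by the isomorphisms of Proposition~\ref{prp:algchange}; hence they assemble into a tensor structure on $\tau$. For the base-point isomorphism, observe that at each finite level the identity $\mu_\alpha \circ i_\alpha = \mathrm{id}$ combined with the formula $\tau_\alpha(V) = \Co{E_\alpha} \Ten_{\mu_\alpha} V$ yields a canonical tensor isomorphism $i_\alpha^{!}\, \tau_\alpha(V) \cong V$ (the functor $i_\alpha^{!}$ extracts the ``value at the base point'', and under the $\mu_\alpha$-structure this value is exactly $V$). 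These isomorphisms are compatible with all transition maps, and since the extension of $i^{!}$ to $\Plng{E}$ described in~\ref{pt:pbpro} is computed fibre-wise from the $i_\alpha^{!}$, they assemble into the required isomorphism $i^{*}\tau(V) \cong V$ of tensor functors.

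The only substantive step is verifying the Cartesian compatibility of the $\tau_\alpha$ across the pro-system, which is precisely the content of Proposition~\ref{prp:algchange}; once that is in hand, the remaining verifications are routine applications of the limit formalism of Section~\ref{ssc:fibred}.
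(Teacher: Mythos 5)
Your proposal is correct and takes essentially the same route as the paper: the paper's one-line proof is precisely to apply Proposition~\ref{prp:algchange} to the transition maps of the pro-system (giving the Cartesian family that defines \(\tau\) as an object of \(\Plng{E}=\Lim[\cI]{\cC^i}\)) and to the base-point maps \(E_\alpha\ra\kk\) (giving the isomorphism with \(V\)), exactly as you do. The only cosmetic difference is that you derive the base-point isomorphism by direct computation from \(\tau_\alpha(V)=\Co{E_\alpha}\Ten_{\mu_\alpha}V\) and \(\mu_\alpha\circ i_\alpha=\mathrm{id}\), rather than observing that it is itself the instance of Proposition~\ref{prp:algchange} with \(E_2=\kk\) and \(\tau_2\) the identity functor.
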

\begin{proof}
  Apply the proposition above to the maps \(i:E_\alpha\ra{}E_2=\kk\) and the 
  transition maps \(E_\beta\ra{}E_\alpha\) of the system, using the 
  definition of \(\Plng{E}\) in~\ref{pt:pullbacks}.
\end{proof}

Finally, we bring back the monoid structure. Let 
\(m:\spec(E\Ten_\kk{}E)\ra\spec(E)\) be the product map. As 
in~\ref{pt:pbpro}, \(m\) determines the functor 
\(m^!:\Plng{E}\ra\Plng{E\Ten_\kk{}E}\). On the other hand, \(\tau\) extends 
to a functor on ind-objects of \(\cC\) (which we again denote \(\tau\)). We 
note that \(E\Ten_\kk{}E\) is isomorphic (as a \(\kk\)-algebra), to 
\(E\Ten_\mu{}E\), so \(\tau\circ\tau\) can be viewed as a functor to 
\(\Plng{E\Ten_\kk{}E}\). Proposition~\ref{prp:algchange} directly generalises 
to the case where the \(E_i\) are pro-finite algebras, and we obtain:

\begin{prop}\label{prp:erep}
  There is an \(E\Ten_\kk{}E\)-linear tensor isomorphism 
  \(\tau\circ\tau\ra{}m^!\circ\tau\).
\end{prop}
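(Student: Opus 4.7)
The plan is to invoke the pro-finite extension of Proposition~\ref{prp:algchange}, applied to the comultiplication $f = m^{*}: E \to E \otimes_{\kk} E$. Regard $E_{1} = E$ with its two maps $p, \mu$ and equip $E_{2} = E \otimes_{\kk} E$ with the structures $p_{2} = m^{*} \circ p$ and $\mu_{2} = m^{*} \circ \mu$. Denote by $\tau_{2}$ the prolongation functor built from $E_{2}$ and these maps.

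First I would check that $m^{*}$ is a $\kk$-algebra map for both of these structures. For the $p$-structure this is immediate, since $m$ is a morphism of schemes over $\sZ$. For the $\mu$-structure this is precisely the associativity of the monoid action $\mu$: the identity $\mu \circ m = \mu \circ (\id_{\FY} \times \mu)$ of maps $\FY \times_{\sZ} \FY \times_{\sZ} \sZ \to \sZ$ dualises to $m^{*} \circ \mu = \mu_{2}$. Having checked the hypotheses, Proposition~\ref{prp:algchange} (in its pro-finite form) supplies an $(E \otimes_{\kk} E)$-linear tensor isomorphism
\begin{equation*}
m^{!}\tau(V) \;=\; \Hom_{E}(E \otimes_{\kk} E,\, \tau(V)) \;\cong\; \tau_{2}(V).
\end{equation*}

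It remains to identify $\tau_{2}(V)$ with $\tau\tau(V)$ as tensor functors. Using $\Co{(E \otimes_{\kk} E)} = \Co{E} \otimes_{\kk} \Co{E}$ and the relation $\mu_{2} = m^{*} \circ \mu$, the iterated tensor product
\begin{equation*}
\tau\tau(V) \;=\; \Co{E} \otimes_{\mu} \bigl(\Co{E} \otimes_{\mu} V\bigr),
\end{equation*}
in which the inner $\Co{E}$ carries its $p$-$\kk$-structure on the outside and its $\mu$-$\kk$-structure on the inside, rearranges to $(\Co{E} \otimes_{\kk} \Co{E}) \otimes_{\mu_{2}} V = \tau_{2}(V)$. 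Under this identification, the two $E$-actions on $\tau\tau(V)$ (one from each application of $\tau$) correspond to the two tensor factors of $E \otimes_{\kk} E$.

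The main obstacle is bookkeeping: $\Co{E} \otimes_{\kk} \Co{E}$ carries several distinct $\kk$-module structures (one on each factor, both via $p$ and via $\mu$), and one has to verify that exactly the combination produced by iterating $\tau$ matches the single $\kk$-structure $\mu_{2}$ obtained from $m^{*} \circ \mu$. This is the content of the monoid/iterativity axiom, and once it is in place the tensor-functor compatibility is automatic from Proposition~\ref{prp:algchange}. Passing to the inverse limit over the finite pieces of $E$ then completes the proof, exactly as in the proof of Corollary~\ref{cor:erep}.
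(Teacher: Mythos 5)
Your proof is correct and takes essentially the same route as the paper's: both apply Proposition~\ref{prp:algchange} (extended to pro-finite algebras) to the comultiplication \(m^\#\) with the two \(\kk\)-structures \(p\) and \(\mu\), and both use the associativity of the action to identify the resulting functor \(\Co{(E\Ten_\kk E)}\Ten_{\mu_2}(-)\) with \(\tau\circ\tau\). The only difference is organisational: the paper invokes associativity up front (taking \(\mu_2=\mu\circ(1\x\mu)\), so that \(m^\#\) intertwines the two actions, which is the hypothesis of Proposition~\ref{prp:algchange}) and then cites Lemma~\ref{lem:plngbase} for the final identification with \(\tau^2\), whereas you define \(\mu_2=m^\#\circ\mu^\#\) so that the hypothesis holds by fiat and the associativity axiom is spent instead on your hand-made rearrangement of the iterated tensor product.
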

\begin{proof}
  The condition that \(\mu\) is an action means that 
  \((1\Ten\mu^\#)\circ\mu^\#=m^\#\circ\mu^\#:\kk\ra{}E\Ten_\kk{}E\) (where we 
  again identify \(E\Ten_\kk{}E\) with \(E\Ten_\mu{}E\)). In other words, 
  \(m^\#:E\ra{}E\Ten_\kk{}E\) maps the action \(\mu\) to the action 
  \(\mu\circ(1\x\mu)\). Applying Proposition~\ref{prp:algchange} with 
  \(\mu_1=\mu\), \(\mu_2=\mu\circ(1\x\mu)\) and \(f=m^\#\), we obtain an 
  \(E\Ten_\kk{}E\) isomorphism of \(m^!\circ\tau\) with the functor 
  \(\oX\mt{}\Co{(E\Ten_k{}E)}\Ten_\mu\oX\). This functor is the same as 
  \(\tau^2\), by Lemma~\ref{lem:plngbase}.
\end{proof}

\subsection{\(\FY\)-Tensor categories}
We now introduce the abstract axiomatisation of the situation described for 
representations. As usual, we fix a base ring \(\kk_0\), a field \(\kk\) over 
\(\kk_0\), and a quasi-separable formal monoid \((\FY_0,i_0,m_0)\) over 
\(\kk_0\).  We denote by \(E_0\) the \(\kk_0\)-pro-finite algebra 
corresponding to \(\FY_0\), and set \(E=E_0\Ten_{\kk_0}\kk\) and 
\(\FY=\spec(E)=\FY_0\x\sZ\).  As before, we denote by \(p:\FY\ra\sZ\) the 
projection.

\point
The base point \(i_0:\spec(\kk_0)\ra\FY_0\) and the product 
\(m_0:\FY_0\x\FY_0\ra\FY_0\) induce, by base change, maps \(i:\sZ\ra\FY\) and 
\(m:\FY\x_\sZ\FY\ra\FY\) over \(\kk\). Given an abelian \(\kk\)-linear tensor 
category \(\cC\), these maps determine functors \(i^!:\Plng{E}\ra\cC\) and 
\(m^!:\Plng{E}\ra\Plng{E\Ten_\kk{}E}\), as in~\ref{pt:pbpro}.

We note that \(\Plng{E\Ten_\kk{}E}\) can be viewed as a full subcategory of 
\(\Lmod{(E\Ten_\kk{}E)}=(\Lmod{E})\Ten_\kk(\Lmod{E})\). The fact that \(i_0\) 
is the unit for the action translates into isomorphisms of 
\(i^!\Ten_\kk\1\circ{}m^!\) and \(\1\Ten_\kk{}i^!\circ{}m^!\) with the 
identity on \(\Plng{E}\), and similarly for the associativity of \(m\).

\point
If \(\cC\) and \(\cD\) are two categories as above, and \(\fF:\cC\ra\cD\) is 
an exact tensor functor, then \(\fF\) induces a functor from \(\Plng{E}\) to 
\(\Plng[\cD]{E}\), which we denote by \(\Plng[\fF]{E}\). We have a natural 
(tensor, \(\kk\)-linear) isomorphism \(i^!\circ\Plng[\fF]{E}=\fF\circ{}i^!\), 
which we indeed denote by ``\(=\)''.  Note that \(i^!\) denotes the 
corresponding functor in both categories.  Similar remarks apply to the other 
canonically determined functors: \(m^!\), \(p_*\), etc. As in the previous 
section, we will sometimes omit \(p_*\).

\point
Assume that \(\tau\) is a functor from \(\cC\) to \(\Plng{E}\). If some 
finite algebra \(F\) over \(\kk\) acts on an object \(\oX\) of \(\cC\), 
applying \(\tau\) we obtain an induced action of \(F\) on \(\tau(\oX)\).  
Hence, \(\tau(\oX)\) is an \(E\Ten{}F\)-module in \(\cC\) (the tensor product 
is over \(\kk_0\) if \(\tau\) is \(\kk_0\)-linear). If \(\tau\) is a tensor 
functor, it restricts to a \(\kk_0\)-algebra map
\begin{equation}
  \mu^\#:=\tau_\1:\kk=\Eend(\1)\ra{}\Eend(\tau(\1))=E
\end{equation}
and \(\tau(\oX)\) is then a \(E\Ten_\mu{}F\)-module. As before, 
\(E\Ten_\mu{}F\) (with \(\kk\)-structure coming from the action on \(E\)) is 
identified, as a \(\kk\)-algebra, with \(E\Ten_\kk{}F\). In particular, each 
\(\tau^2(\oX)\) is an \(E\Ten_\kk{}E\)-module.

\begin{defn}\label{def:etensor}
  With notation as above.
  \begin{enumerate}
    \item
      An \Def{\(E_0\)-structure} (or \(\FY_0\)-structure) on a \(\kk\)-linear 
      tensor category \(\cC\) consists of the following data:
      \begin{enumerate}
        \item A \(\kk_0\)-linear tensor functor \(\tau\) from \(\cC\) to 
          \(\Plng{E}\), which is exact when viewed as a functor into 
          \(\Lmod{E}\).
        \item A \(\kk\)-linear tensor isomorphism
          \begin{equation}\label{eq:etensor}
            \nat{a}:i^!\circ\tau\ra Id_\cC
          \end{equation}
        \item An \(E\Ten_\kk{}E\)-linear tensor isomorphism
          \begin{equation}
            \nat{b}:\tau\circ\tau\ra m^!\circ\tau
          \end{equation}
      \end{enumerate}
      An \Def{\(E_0\)-tensor category} is a \(\kk\)-linear tensor category 
      together with an \(E_0\)-structure.

    \item
      If \((\cC,\tau,\nat{a},\nat{b})\) and \((\cD,\sigma,\nat{c},\nat{d})\) 
      are \(E_0\)-tensor categories (where \(\cD\) is allowed to be over a 
      different field \(K\)), an \Def{\(E_0\)-functor} from the first to the 
      second consists of an exact \(\kk\)-linear tensor functor 
      \(\fF:\cC\ra\cD\), together with an \(E\)-linear tensor isomorphism 
      \(u\):
      \begin{equation}
        \xymatrix{
        \cC\ar[r]^{\fF}\ar[d]_{\tau} & \cD\ar[d]^{\sigma}\ar@{=>}[dl]_u\\
        \Plng{E}\ar[r]_{\Plng[\fF]{E}} & \Plng[\cD]{E}
        }
      \end{equation}

      This data is required to satisfy the obvious commutation relation with 
      the structure isomorphism: The diagrams
      \begin{gather}
        \xymatrix{
        i^!\sigma\fF(\oX)\ar[d]_{i^!(u_\oX)}\ar[r]^{c_{\fF(\oX)}} &
          \fF(\oX)\\
        i^!(\Plng[\fF]{E}(\tau\oX))\ar@{=}[r] &
          \fF(i^!\tau\oX)\ar[u]_{\fF(a_\oX)}
        }\\
        \intertext{and}
        \xymatrix{
        \sigma\sigma\fF(\oX)\ar[d]_{\sigma(u_\oX)}\ar[r]^{d_{\fF(\oX)}} &
          m^!\sigma(\fF(\oX))\ar[d]^{m^!(u_\oX)}\\
        \sigma(\fF(\tau\oX))\ar[d]_{u_{\tau\oX}} &
          m^!\fF(\tau\oX)\ar@{=}[d]\\
        \fF(\tau\tau\oX)\ar[r]_{\fF(b_\oX)} & \fF(m^!\tau\oX)
        }
      \end{gather}
      commute.

    \item
      If \((\fF,u)\) and \((\fG,v)\) are \(E_0\)-functors from \((\cC,\dots)\) 
      to \((\cD,\dots)\), an \Def{\(E_0\)-map} from \((\fF,u)\) to 
      \((\fG,v)\) is a (\(K\)-linear) map \(\nat{r}:\fF\ra\fG\) of tensor 
      functors, such that the diagram
      \begin{equation}
        \xymatrix{
        \sigma\fF(\oX)\ar[r]^{u_\oX}\ar[d]_{\sigma(r_\oX)} &
          \fF(\tau\oX)\ar[d]^{r_{\tau\oX}}\\
        \sigma\fG(\oX)\ar[r]_{v_\oX} & \fG(\sigma\oX)
        }
      \end{equation}
      commutes.
  \end{enumerate}
\end{defn}

\begin{remark}\label{rmk:dual}
  A functor \(\tau\) as in the definition determines a functor 
  \(\Co{\tau}:\oX\mt\Co{(\tau(\Co{\oX}))}\), which is a tensor functor into 
  \(\Flat{E}\) (by Corollary~\ref{cor:proflatop}), and this process 
  determines an equivalence between the two kinds of tensor functors.  
  Further, an \(E_0\)-tensor functor determines, in an obvious manner, an 
  isomorphism \(\Co{u}:\Co{\sigma}\circ\fF\ra\fF\circ\Co{\tau}\) (in the 
  terminology of the definition).
  
  Hence, as discussed earlier in Remark~\ref{rmk:dualc}, we may instead work 
  with tensor functors from \(\cC\) to \(\Flat{E}\). Indeed, the dual 
  \(\Co{\tau}\) is used, for convenience, in the proof of 
  Theorem~\ref{thm:main} below, but everything can be translated back and 
  forth, by dualising. As in the concrete setup of the previous section, 
  objects of the form \(\tau(\oX)\) can be interpreted as ``functions'' on 
  \(\Co{\oX}\). See~\Sec{ssc:cschemes} for more details.
\end{remark}

\point\label{pt:estruct}
Corollary~\ref{cor:erep} and Proposition~\ref{prp:erep} show how, given an 
action of \(\FY_0\) on \(\sZ\) and an \(\FY_0\)-group \(\sG\), \(\Rep_\sG\) 
acquires an \(E_0\)-structure. We note that in this case, 
\(\Co{\tau}(\oX)=E\Ten_\mu\oX\) (by Lemma~\ref{lem:plngbase}).

Conversely, as mentioned above, given an \(E_0\)-tensor category \(\cC\) over 
\(\kk\), the functor \(\tau\) determines a map 
\(\tau_\1:\kk=\Eend(\1)\ra{}\Eend(\Co{E})=E\).  The two isomorphisms given with 
the \(E_0\)-structure on \(\cC\) show that this map corresponds to an action 
\(\mu:\spec(E)=\FY_0\x\sZ\ra\sZ\).

\begin{prop}
  The process described in~\ref{pt:estruct} determines a bijection between 
  actions of \(\FY_0\) on \(\sZ=\spec(\kk)\) and isomorphism classes of 
  \(E_0\)-structures on \(\Vec_\kk\) (all over \(\kk_0\))
\end{prop}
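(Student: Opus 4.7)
The plan is to construct an inverse to the assignment of~\ref{pt:estruct} explicitly. To an $\FY_0$-action on $\sZ$, corresponding to a $\kk_0$-algebra map $\mu^\#:\kk\ra{}E$ satisfying $i^\#\circ\mu^\#=\mathrm{id}_\kk$ together with the associativity condition $m^\#\circ\mu^\#=(1\Ten\mu^\#)\circ\mu^\#$, I assign the $E_0$-structure on $\Vec_\kk$ produced by the construction of~\ref{pt:estruct} applied to the trivial $\FY_0$-group (whose category of representations is $\Vec_\kk$), namely $\tau_\mu(V)=\Co{E}\Ten_\mu V$ with the canonical $(a_\mu,b_\mu)$ furnished by Corollary~\ref{cor:erep} and Proposition~\ref{prp:erep}. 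Since $(\tau_\mu)_\1=\mu^\#$ by construction, composing with the assignment of~\ref{pt:estruct} returns $\mu$; this handles one of the two compositions.

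For the opposite composition, I start with an arbitrary $E_0$-structure $(\tau,a,b)$ on $\Vec_\kk$ and set $\mu^\#\df\tau_\1$; I must show both that $\mu^\#$ defines an action and that $(\tau,a,b)\cong(\tau_{\mu^\#},a_{\mu^\#},b_{\mu^\#})$. Both points rest on the following rigidity: since every object of $\Vec_\kk$ is a finite direct sum of copies of the unit $\1=\kk$, any exact $\kk_0$-linear tensor functor $\tau:\Vec_\kk\ra\Plng{E}$ is determined up to unique tensor isomorphism by the pair $(\tau(\1),\tau_\1)$. The tensor structure identifies $\tau(\1)$ canonically with the unit $\Co{E}$ of $\Plng{E}$, and $\Eend_{\Plng{E}}(\Co{E})=E$, so unwinding tensor compatibility yields a canonical isomorphism $\tau\cong\tau_{\mu^\#}$ with $\tau_{\mu^\#}(V)=\Co{E}\Ten_{\mu^\#}V$.

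Under this identification, $a$ becomes a $\kk$-linear tensor isomorphism $i^!\tau_{\mu^\#}\ra\Id$. Evaluated at $\1$, it is a $\kk$-linear isomorphism $i^!\Co{E}\ra\kk$ between the ``new'' $\kk$-structure (via $i^\#\circ\mu^\#$, coming from the $E$-action pulled back through $\mu^\#$) and the original $\kk$-structure on $\kk$, forcing $i^\#\circ\mu^\#=\mathrm{id}_\kk$. Similarly, $b$ becomes an $E\Ten_\kk E$-linear tensor isomorphism $\tau_{\mu^\#}\circ\tau_{\mu^\#}\ra m^!\circ\tau_{\mu^\#}$; its restriction to $\1$ compares two $E\Ten_\kk E$-module structures on a single underlying object and forces $m^\#\circ\mu^\#=(1\Ten\mu^\#)\circ\mu^\#$, the associativity axiom. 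Once both axioms hold, $a$ and $b$ are uniquely determined by the tensor compatibility and coincide with $a_{\mu^\#},b_{\mu^\#}$, completing the inverse.

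The main obstacle will be verifying cleanly that the tensor and $E\Ten_\kk E$-linearity constraints on $b$ evaluated at $\1$ give \emph{exactly} the associativity of $\mu$, neither a weaker nor a stronger condition. This reduces, via the tensor structure, to a diagrammatic identity involving the canonical element $1$, and amounts to the general principle that pseudo-monoid structures on the unit of a monoidal category are in bijection with associative unital algebra structures on $\Eend(\1)$, here transported through the prolongation functor.
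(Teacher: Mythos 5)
Your proposal is correct and follows essentially the same route as the paper's own proof: both hinge on the rigidity fact that a $\kk_0$-linear exact tensor functor on $\Vec_\kk$ is determined (up to unique tensor isomorphism) by $\tau(\1)\cong\Co{E}$ and the ring map $\tau_\1:\kk\ra{}E$, with the structure isomorphisms $\nat{a}$ and $\nat{b}$, evaluated at $\1$, encoding precisely the unit and associativity axioms of the action. The paper's proof is merely a terser version of this (it asserts the determination by $\tau_\1$ and leaves the axiom verification to the discussion preceding the proposition), so your write-up fills in the same argument in more detail.
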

\begin{proof}
  This is a direct computation. Starting with an action \(\mu:\FY\ra\sZ\), 
  corresponding to a pro-algebra map \(f:\kk\ra{}E\), we have 
  \(\mu^*(\1)=\Co{E}\), and given an endomorphism \(a\in\kk\) of \(\1\), 
  \(\mu^*(a)\) is given by the ``right'' vector space structure on 
  \(\Co{E}\), via \(\mu\). Hence \(\tau_\1=f\).

  Conversely, since the functor \(\tau\) is exact, it is determined by its 
  value on \(\1\) (and \(\Eend(\1)\)), so by the map \(f=\tau_1:\kk\ra{}E\).
\end{proof}

\begin{defn}
  Let \(\cC\) be an \(E_0\)-tensor category. An \Def{\(E_0\)-fibre functor} 
  on \(\cC\) is an \(E_0\)-tensor functor from \(\cC\) to \(\Vec_\kk\), where 
  the latter has the \(E_0\)-structure corresponding to the action recovered 
  from \(\cC\).

  An \Def{\(E_0\)-Tannakian category} is an \(E_0\)-tensor category that 
  admits an \(E_0\)-fibre functor.

  More generally, if \(K\) is an \(\FY\)-field extension of \(\kk\), an 
  \Def{\(E_0\)-fibre functor over \(K\)} is an \(E_0\)-tensor functor from 
  \(\cC\) to \(\Vec_K\), with the corresponding \(E_0\)-structure.
\end{defn}

We may now formulate and prove the main Theorem: \(E_0\)-Tannakian categories 
are precisely categories of representations of (pro-) linear 
\(\FY_0\)-groups.

\begin{theorem}\label{thm:main}
  Let \(\w\) be an \(E_0\)-fibre functor on an \(E_0\)-tensor category 
  \(\cC\). Then there is a pro-linear \(E_0\)-group scheme \(\sG\) over 
  \(\kk\), and an action of \(\sG\) on each \(\w\), making \(\w\) an 
  \(E_0\)-tensor equivalence between \(\cC\) and \(\Rep_\sG\). If 
  \(\cC=\Rep_\sH\) for some pro-linear \(E_0\)-group scheme, then \(\sH\) is 
  canonically isomorphic to \(\sG\).
\end{theorem}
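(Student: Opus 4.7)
The plan is to reduce to the classical Tannakian theorem of Saavedra (\DM{Thm~2.11}) by first recovering the underlying pro-affine group scheme and then reintroducing the $\FY_0$-structure as an $\FY$-action on that group. Forgetting the tensor functor $\tau$ together with its structure isomorphisms, the pair $(\cC,\w)$ is a classical neutral Tannakian category over $\kk$. The classical theorem then yields a pro-linear affine group scheme $\Alg{\sG}=\Aut^{\otimes}(\w)$ over $\kk$ and a $\kk$-linear tensor equivalence $\Phi\colon\cC\iso\Rep_{\Alg{\sG}}$ whose composition with the forgetful functor recovers $\w$. This $\Alg{\sG}$ will be the underlying group scheme of our eventual $\sG$, consistent with Proposition~\ref{prp:underly}.

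Next, I would upgrade $\Alg{\sG}$ to an $\FY$-group by producing a section $\Alg{\sG}\to\tau\Alg{\sG}$, equivalently an action $\FY\x_\sZ\Alg{\sG}\to\Alg{\sG}$ lifting $\mu$ on $\sZ$. Dualising via Remark~\ref{rmk:dual}, the $E_0$-structure supplies an exact tensor functor $\Co{\tau}\colon\cC\to\Flat[\cC]{E}$; transport along $\Phi$ gives a tensor functor $\fF\colon\Rep_{\Alg{\sG}}\to\Flat[{\Rep_{\Alg{\sG}}}]{E}$, and the isomorphism $u$ of the $E_0$-fibre functor $\w$ asserts that the composition with the extended fibre functor $\w_E$ is naturally isomorphic to the base-change functor $V\mt E\Ten_\mu\w(V)$. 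Using Deligne's identification $\Lmod[{\Rep_{\Alg{\sG}}}]{E}=\Rep_{\Alg{\sG}}\Ten_\kk\Lmod[\Vec]{E}$ (Proposition~\ref{prp:deligne1}), the target category $\Flat[{\Rep_{\Alg{\sG}}}]{E}$ is interpreted as flat representations over $\spec(E)$ of the base change $\Alg{\sG}_p:=\Alg{\sG}\x_{\kk,p}\spec(E)$, while the fibre-functor compatibility with $V\mt E\Ten_\mu\w(V)$ distinguishes an $\Alg{\sG}_\mu:=\Alg{\sG}\x_{\kk,\mu}\spec(E)$-torsor trivialisation. Classical Tannakian functoriality then translates $\fF$ together with $u$ into a morphism of group schemes over $\spec(E)$,
\begin{equation*}
  \bar\mu_{\Alg{\sG}}\colon\Alg{\sG}_p\longrightarrow\Alg{\sG}_\mu,
\end{equation*}
which by the geometric dictionary of~\rpt{pt:affine} is precisely the data of an $\FY$-action on $\Alg{\sG}$ lifting $\mu$.

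The remaining monoid axioms then follow by functoriality: the isomorphism $\nat{a}\colon i^!\tau\simeq\Id$ forces the base point $i_0$ to act as the identity, while $\nat{b}\colon\tau\tau\simeq m^!\tau$ yields associativity for $\bar\mu_{\Alg{\sG}}$, promoting $\Alg{\sG}$ to a pro-linear $\FY$-group $\sG$. Running the construction of~\rpt{pt:estruct} on this $\sG$ recovers the original $E_0$-tensor structure on $\cC$ (each step having been an equivalence), so $\Phi$ upgrades to an $E_0$-tensor equivalence $\cC\iso\Rep_\sG$ with $\w$ as the underlying $E_0$-fibre functor. Uniqueness follows from Proposition~\ref{prp:underly} applied to a hypothetical $\sG'$ with $\cC\simeq\Rep_{\sG'}$, together with the same functoriality argument recovering the $\FY$-action. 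The main obstacle is the middle step: correctly interpreting $\Flat[{\Rep_{\Alg{\sG}}}]{E}$ Tannakially while tracking the two distinct $\kk$-algebra structures on $E$ (via $p$ and $\mu$), so that the tensor functor $\fF$ with its fibre-functor isomorphism $u$ yields the right morphism between the two base changes $\Alg{\sG}_p$ and $\Alg{\sG}_\mu$; the pro-finite nature of $E$ further requires a careful passage to the limit (as facilitated by~\rpt{pt:pbpro}).
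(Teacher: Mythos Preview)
Your overall strategy coincides with the paper's: apply the classical Tannakian theorem to obtain \(\Alg{\sG}=\Aut^\Ten(\w)\), then endow it with an \(\FY\)-action using the extra data \((\tau,u,\nat{a},\nat{b})\), and finally invoke the adjunction of~\ref{pt:prolong} to deduce that \(\w\) is an \(E_0\)-tensor equivalence. The difference lies entirely in how the action map is produced.

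The paper constructs the action explicitly on \(A\)-points: for \(y\in\FY_0(A)\) and \(g\in\sG(A)\), it uses the isomorphism \(u_\oX:\w(\Co{\tau}(\oX))\iso E\Ten_\mu\w(\oX)\) to transport the \(E\)-linear automorphism \(g_{\Co{\tau}(\oX)}\) into an \(A\)-linear automorphism of \(A^{(y)}\Ten_\kk\w(\oX)\) via the commutative square
\[
\xymatrix{
A\Ten_E\w(\Co{\tau}(\oX))\ar[r]^{A\Ten_E u_\oX}\ar[d]_{g_{\Co{\tau}(\oX)}} & A^{(y)}\Ten_\kk\w(\oX)\ar[d]^{\mu(y,g)_\oX}\\
A\Ten_E\w(\Co{\tau}(\oX))\ar[r]^{A\Ten_E u_\oX} & A^{(y)}\Ten_\kk\w(\oX)
}
\]
and then checks the monoid axioms directly using \(\nat{a}\) and \(\nat{b}\). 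Your proposal instead interprets \(\Flat[{\Rep_{\Alg{\sG}}}]{E}\) as representations of \(\Alg{\sG}_p\) on locally free \(E\)-modules, reads the composite fibre functor \(V\mt E\Ten_\mu\w(V)\) as having Tannaka group \(\Alg{\sG}_\mu\), and invokes functoriality of \(\Aut^\Ten\) to obtain \(\Alg{\sG}_p\ra\Alg{\sG}_\mu\) over \(\spec(E)\).

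This is correct, and unwinding your functoriality claim on \(A\)-points reproduces exactly the paper's diagram above; in that sense your argument is a repackaging rather than a new route. What your abstraction buys is conceptual clarity about why the two base changes \(\Alg{\sG}_p\) and \(\Alg{\sG}_\mu\) appear. What the paper's explicit version buys is that it sidesteps the need to justify Tannakian functoriality over the non-field base \(E\): the category \(\Flat[{\Rep_{\Alg{\sG}}}]{E}\) is rigid but not abelian, and the identification of its automorphism group with \(\Alg{\sG}_p\) (while true) is not the textbook statement over a field. You correctly flag this as the main obstacle; the paper's hands-on construction is precisely the verification that fills this gap.
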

\begin{proof}
  Let \(\sG\) be the usual pro-linear group scheme \(\Aut^\Ten(\w)\) over 
  \(\kk\) associated to the fibre functor \(\w\). As indicated by 
  Proposition~\ref{prp:underly}, this should be the underlying pro-linear 
  group scheme, so our task is to give \(\sG\) the structure of a 
  \(\FY_0\)-scheme, over the \(\FY_0\)-structure on \(\sZ=\spec(\kk)\).  
  Thus, we should define an action map \(\mu:\FY_0\x\sG\ra\sG\) over \(\kk\), 
  where the domain is given the \(\kk\)-structure coming from the action map 
  \(\mu_0:\FY_0\x\sZ\ra\sZ\).
  
  In other words, we should provide a compatible system \(\mu_A\) of monoid 
  actions \(\mu_A:\FY_0(A)\x\sG(A)\ra\sG(A)\), one for each \(\kk_0\)-algebra 
  \(A\). Here, \(\sG(A)\) is the set of maps \(\spec(A)\ra\sG\) over 
  \(\kk_0\), and similarly for \(\FY_0\). Furthermore, these maps should 
  respect the \(\kk\)-structure in the following sense: Given an element 
  \(y\in\FY_0(A)\), and an element \(g\in\sG(A)\) mapping to an element 
  \(p\in\sZ(A)\), \(\mu_A(y,g)\) is a map of schemes \(h:\spec(A)\ra\sG\) 
  such that the diagram
  \begin{equation}\label{eq:twist}
    \xymatrix{
    \spec(A)\ar[r]^h\ar[d]_{(y,p)} & \sG\ar[d] \\
    \FY_0\x\sZ\ar[r]_{\mu_0} & \sZ
    }
  \end{equation}
  commutes. We denote by \(A^{(y)}\) the ring \(A\) with the \(\kk\)-algebra 
  structure coming from diagram~\eqref{eq:twist} (in the language 
  of~\ref{pt:plngadj}, \(\spec(A^{(y)})=\mu_!(\spec(A))\) as schemes over 
  \(\sZ\)).
  
  We thus fix a \(\kk\)-algebra \(A\).  By the definition of \(\sG\), we 
  should produce, for each \(A\)-point \(y\) of \(\FY_0\), and each tensor 
  automorphism \(g\) of \(A\Ten_\kk\w\) over \(A\), an automorphism 
  \(\mu(y,g)\) of \(A^{(y)}\Ten_\kk\w\), again over \(A\).

  An \(A\)-point of \(\FY_0\) factors, by definition, through a finite 
  sub-scheme \(\FY_0'\). Also by definition, the structure \(\tau\) restricts 
  to a tensor functor \(\Plng{E'}\), where \(\FY_0'=\spec(E_0')\), and 
  \(E'=E_0\Ten\kk\). So as long as the \(A\)-point \(y\) is fixed, we may 
  assume that \(E=E_0\Ten\kk\) is finite (the same is true for any finite 
  number of points).  We are thus given a map of \(\kk_0\)-algebras 
  \(y:E_0\ra{}A\) (in fact, we may at this point assume \(A=E\) and \(y\) the 
  identity, but this will be inconvenient when comparing several points).

  Consider now an object \(\oX\). The \(E\)-structure on the fibre functor 
  \(\w\) determines an \(E\)-module isomorphism 
  \(u_\oX:\w(\Co{\tau}(\oX))\ra{}E\Ten_\mu\w(\oX)\) (the right hand side is 
  what we denoted \(E^{(z)}\Ten_\kk\w(\oX)\), where \(z:E\ra{}E\) is the 
  identity). Using \(y\) we thus obtain an \(A\)-module isomorphism
  \begin{equation}
    A\Ten_E u_\oX:A\Ten_E\w(\Co{\tau}(\oX))\ra A^{(y)}\Ten\w(\oX)
  \end{equation}

  If \(g\in\sG(A)\) is an automorphism of \(A\Ten_\kk\w\), we note that   
  \(g_{\Co{\tau}(\oX)}:A\Ten_\kk\w(\Co{\tau}(\oX))\ra{}A\Ten_\kk\w(\Co{\tau}(\oX))\) 
  is an \(E\)-module automorphism (since \(E\) acts on objects in \(\cC\)), 
  so it descends to an automorphism of \(A\Ten_E\w(\Co{\tau}(\oX))\).
  Hence we obtain an induced map
  \begin{equation}\label{eq:act}
    \xymatrix{
    A\Ten_E\w(\Co{\tau}(\oX))\ar[r]^{A\Ten_E u_\oX}
    \ar[d]_{g_{\Co{\tau}(\oX)}} & 
    A^{(y)}\Ten_\kk\w(\oX)\ar[d]^{\mu(y,g)_\oX}\\
    A\Ten_E\w(\Co{\tau}(\oX))\ar[r]^{A\Ten_E u_\oX} & A^{(y)}\Ten_\kk\w(\oX)
    }
  \end{equation}
  with \(\mu(y,g)\) the map as indicated. This concludes the definition of 
  \(\mu\).

  To verify that the map is a monoid action, if, in the above definition, 
  \(y:E_0\ra\kk\) corresponds to the identity of \(\FY_0\), then 
  \(A^{(y)}\Ten_\kk\w(\oX)=\w(\oX)\), and the induced map in~\eqref{eq:act} 
  is just \(g_\oX\) (using the existence of the isomorphism \(a\) 
  from~\eqref{eq:etensor} of Definition~\ref{def:etensor}).

  Next, assume that we are given two points \(y_1,y_2:E_0\ra{}A\), and let 
  \(y=(y_1,y_2)\circ{}m^\#\) be their product in \(\FY_0\).  We first note 
  that from the fact that we have an action on \(\sZ\) we get an isomorphism 
  (over \(\kk\)) \({(A^{(y_2)})}^{(y_1)}\ra{}A^{(y)}\). Now, applying the 
  definition~\eqref{eq:act} twice, we get a diagram
  \begin{equation}
    \xymatrix{
    A^{(y_2)}\Ten_{y_1}\w(\Co{\tau}\Co{\tau}(\oX))\ar[r]
    \ar[d]_{g_{\Co{\tau}\Co{\tau}(\oX)}} & 
    {A^{(y_2)}}^{(y_1)}\Ten_\kk\w(\oX)\ar[d]^{\mu(y_1,\mu(y_2,g))_\oX}\\
    A^{(y_2)}\Ten_{y_1}\w(\Co{\tau}\Co{\tau}(\oX))\ar[r] & 
    {A^{(y_2)}}^{(y_1)}\Ten_\kk\w(\oX)
    }
  \end{equation}

  Applying the isomorphism \(b\) from Definition~\ref{def:etensor}, we may 
  replace \(\w(\Co{\tau}\Co{\tau}(\oX))\) with 
  \(E\Ten_\kk{}E\Ten_m\w(\Co{\tau}(\oX))\), so the left part of the diagram 
  becomes \(A\Ten_E\w(\Co{\tau}(\oX))\), with the \(E\)-structure on \(A\) 
  given by \(y\). This concludes the proof that \(\mu\) is an action. The fact 
  that \(\w\) induces an equivalence of \(\cC\) with \(\Rep_\sG\) follows from 
  the adjunction, as in Proposition~\ref{prp:underly}.

  For the last part, assume that \(\cC\) is the \(E_0\)-tensor category 
  associated with a pro-linear \(\FY\)-group scheme. By the usual Tannakian 
  formalism and the construction, the underlying group scheme is \(\sG\). Thus, 
  we need only verify that the action of \(\FY\) is the same. This is clear, 
  since the functor \(\tau\) was defined in the same way for representations of 
  \(\sG\) and for vector spaces.
\end{proof}

\begin{remark}
  Analogously to the algebraic case, if \(\sT=\spec(B)\) is an \(\FY\)-scheme 
  over \(\sZ\), and \(g_0:\sT\ra\sG\) is a \(\sT\)-point of \(\sG\) (so it 
  commutes with the action), then \(g\) determines an automorphism of 
  \(B\Ten_\kk\w\) as an \emph{\(E_0\)-functor}. Indeed, such a point 
  determines an automorphism \(g_0\) of \(B\Ten_\kk\w\) as a tensor functor.  
  Let \(A=B\Ten_\kk{}E\), let \(y:E\ra{}A\) be the obvious map, and let \(g\) 
  be the induced automorphism of \(A\Ten\w\). Then, by diagram~\eqref{eq:act} 
  we have a diagram
  \begin{equation}
    \xymatrix{
    B\Ten_\kk\w(\Co{\tau}(\oX))\ar[r]^{A\Ten_E u_\oX}
    \ar[d]_{g_{\Co{\tau}(\oX)}} & 
    A^{(y)}\Ten_\kk\w(\oX)\ar[d]^{\mu(y,g)_\oX}\\
    B\Ten_\kk\w(\Co{\tau}(\oX))\ar[r]^{A\Ten_E u_\oX} & 
    A^{(y)}\Ten_\kk\w(\oX)
    }
  \end{equation}
  Now, since the action of \(y\) on \(\sG\) commutes with that on \(\sT\), 
  the map on the right is \((B\Ten{}E_0)\Ten_B{g_0}_\oX\) (tensor product 
  with respect to the action \(B\ra{}B\Ten{}E_0\)). Composing with the 
  isomorphism 
  \(B\Ten{}E_0\Ten_B{}B\Ten_\kk\w(\oX)\ra{}B\Ten_\kk{}E\Ten_\mu\w(\oX)\) 
  (coming from the fact that that action on \(\sT\) is over \(\sZ\)), we get 
  the compatibility required in Definition~\ref{def:etensor}.
\end{remark}

\subsection{\(\FY\)-schemes in \(\cC\)}\label{ssc:cschemes}
As with usual tensor categories, it is possible to define the category of 
(affine) \(\FY\)-schemes. This is the opposite category to the category of 
\(\FY\)-algebras in \(\cC\), defined as follows. Recall, first, that an 
algebra in \(\cC\) is an ind-object \(\oX\) of \(\cC\), together with  
(suitable) maps \(m:\oX\Ten\oX\ra\oX\) and \(u:\1\ra\oX\). If \(\cC\) is 
given with an \(E_0\)-structure \((\tau,a,b)\), the tensor structure on 
\(\tau\) makes \(\tau(\oX)\) an algebra as well. We note that the isomorphism 
\(a\) induces a map \(a_\oX:\oX\ra\tau\oX\), which is an algebra map (since 
\(a\) is a tensor isomorphism).

We recall that in the case of usual algebras, \(\tau(\oX)\) was the analogue 
of the algebra of functions on the arc space of the scheme associated to 
\(\oX\). Hence the following definition is natural.

\begin{defn}\label{def:escheme}
  Let \((\cC,\tau,a,b)\) be an \(E\)-tensor category, \(\oX\) an algebra in 
  \(\cC\).  An \Def{\(E\)-structure} on \(\oX\) consists of an algebra map 
  \(d:\tau(\oX)\ra\oX\), such that \(d\circ{}a_\oX\) is the identity, and the 
  following diagram commutes
  \begin{equation}\label{eq:ealg}
    \xymatrix{
    \tau\tau\oX\ar[rr]^{\tau(d)}\ar[d]_{b_\oX}&&\tau\oX\ar[d]^d\\
    m^!\tau\oX\ar[r]_m & \tau\oX\ar[r]_d & \oX
    }
  \end{equation}

  The category of (affine) \Def{\(\FY_0\)-schemes in \(\cC\)} is the opposite 
  of the category of algebras with \(E\)-structure (and maps of between them 
  that preserve this structure)
\end{defn}

As in previously known cases, any object \(\oX\) has an associated ``affine 
space'' \(\cC\)-scheme \(A(\oX)\). These affine spaces were important in the 
differential case to achieve elimination of imaginaries in the corresponding 
theory (cf~\Cite[4.4--4.5]{tannakian}), and played an important role 
in~\Cite{Deligne}.

The construction of \(A(\oX)\) is a special case of the following result, 
which says that prolongation spaces also exist in \(\cC\).  The defining 
property is taken to be analogous to the adjunction in~\ref{pt:prolong} (the 
affine case should also be compared to Proposition~\ref{prp:plngaff}).

\begin{prop}
  Let \(\cC\) by an \(E\)-tensor category. The forgetful functor from 
  \(\FY\)-schemes to schemes in \(\cC\) has a right adjoint.
\end{prop}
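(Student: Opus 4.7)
The plan is to construct the right adjoint as the categorical analog of the geometric prolongation functor $\tau$ of $\ref{pt:prolong}$, which there was shown to be right adjoint to the forgetful functor from $\FY_0$-pro-schemes to pro-schemes over $\sZ$. Via the scheme/algebra duality, this amounts to producing a free-algebra functor for the ``monad'' on algebras in $\cC$ induced by $(\tau,\nat{a},\nat{b})$. Given a scheme $\sX$ in $\cC$ corresponding to an algebra $\oX$ (an ind-object of $\cC$), the underlying algebra of $T(\sX)$ will be $\tau(\oX)$ itself: since $\tau$ is a tensor functor, $\tau(\oX)$ is an algebra in $\Plng{E}$, and we view it as an algebra in $\cC$ by forgetting its $E$-module structure.

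To equip $T(\sX)$ with an $E$-structure $d_\oX : \tau\tau(\oX) \to \tau(\oX)$ in the sense of Definition $\ref{def:escheme}$, I combine the coherence isomorphism $b_\oX : \tau\tau(\oX) \isom m^!\tau(\oX)$ with a natural transformation $\varepsilon : m^! \Rightarrow \mathrm{Id}$, valued in $\cC$ (forgetting the $E\Ten_\kk E$-module structure). This $\varepsilon$ comes from the monoid unit axiom on $\FY_0$: the identity $m\circ(\mathrm{id}\x i) = \mathrm{id}_\FY$ translates on algebras to $(\mathrm{id}\Ten i^\#)\circ m^\# = \mathrm{id}_E$, and hence to a canonical restriction map $m^!\oY = \Ehom_E(E\Ten_\kk E, \oY) \to \oY$ obtained by pairing with $\mathrm{id}\Ten i^\#$. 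Set $d_\oX := \varepsilon_{\tau(\oX)}\circ b_\oX$. The axioms for $T(\sX)$ then reduce to coherence relations implicit in Definition $\ref{def:etensor}$: the identity $d_\oX\circ a_{\tau(\oX)} = \mathrm{id}$ reflects the compatibility of $\nat{a}$ with $\nat{b}$ coming from the monoid unit, and the commutativity of $\eqref{eq:ealg}$ reflects a coassociativity-type compatibility of $\nat{b}$ coming from the associativity of $m$.

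For the adjunction, given an $\FY$-scheme $\sW$ with $E$-structure $d_\sW : \tau(\oW) \to \oW$, the required natural bijection reads in the algebra picture as
\begin{equation*}
  \Ehom_{\FY\text{-alg}}(\tau(\oX), \oW) = \Ehom_{\text{alg}}(\oX, \oW).
\end{equation*}
The forward map sends $f : \oX \to \oW$ to $d_\sW \circ \tau(f) : \tau(\oX) \to \oW$; the backward map pulls a compatible $g : \tau(\oX) \to \oW$ back along the canonical unit $\eta_\oX : \oX \xrightarrow{a_\oX^{-1}} i^!\tau(\oX) \to \tau(\oX)$, where the second arrow is the evaluation $i^!\oY = \Ehom_E(\kk,\oY) \to \oY$ at $1\in\kk$. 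Verifying these are mutual inverses uses the unit axiom $d_\sW\circ a_\oW = \mathrm{id}$ for $\sW$, the $E$-compatibility of $g$ (which says $g\circ d_{\tau(\oX)} = d_\sW\circ\tau(g)$), and the $E$-tensor axioms of $\cC$.

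The main obstacle is the categorical bookkeeping: $\tau\tau(\oX)$ carries two distinct $E$-actions, $m^!\tau(\oX)$ lives in $\Plng{E\Ten_\kk E}$ with an $E\Ten E$-action, and several canonically defined natural transformations (such as $i^!\Rightarrow\mathrm{Id}$ and $m^!\Rightarrow\mathrm{Id}$ from the two possible placements of the unit $i$) must be unpacked and correctly matched to the coherence data. The axioms of $(\nat{a},\nat{b})$ are designed precisely so that each diagram collapses formally, but ensuring that the right maps are chosen (e.g.\ $(i\x\mathrm{id})$ versus $(\mathrm{id}\x i)$ in defining $\varepsilon$, and the corresponding consistency in the unit axiom) is where the substantive work lies.
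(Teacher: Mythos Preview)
Your approach is essentially the same as the paper's: the right adjoint sends an algebra $\oW$ to $\tau\oW$, with the $E$-structure on $\tau\oW$ given by $b_\oW$ followed by a canonical map $m^!\tau\oW \to \tau\oW$; the adjunction bijection is exactly the free-algebra bijection for the monad that $(\tau,\nat{a},\nat{b})$ induces on algebras in $\cC$. The paper's proof says no more than this, attributing the verifications to ``the monoid axioms on $\FY$''.

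There is, however, a confusion in your description of $\varepsilon: m^!\oY \to \oY$. This map has nothing to do with the unit $i$; it is simply the counit of the adjunction $m_* \dashv m^!$, i.e., for $\phi \in \Ehom_E(E\Ten_\kk E, \oY)$ it is evaluation $\phi \mapsto \phi(1\Ten 1)$, equivalently precomposition with the $E$-module map $m^\#: E \to E\Ten_\kk E$. The paper labels this map simply ``$m$'' in diagram~\eqref{eq:ealg}. Your formula ``pairing with $\mathrm{id}\Ten i^\#$'' does not produce it: $\mathrm{id}\Ten i^\#$ is a map $E\Ten E \to E$, and since $\Ehom_E(-,\oY)$ is contravariant you would obtain a map in the wrong direction. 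What the unit axiom $m\circ(\mathrm{id}\times i) = \mathrm{id}_\FY$ is actually needed for is the \emph{verification} that $d_{\tau\oW}\circ a_{\tau\oW} = \mathrm{id}$, not the construction of $\varepsilon$ itself; similarly, associativity of $m$ is what makes diagram~\eqref{eq:ealg} commute for $\tau\oW$. These are the ``monoid axioms on $\FY$'' the paper invokes, and as you correctly sense, they must be combined with the $E\Ten E$-linearity of $\nat{b}$ and the naturality of $\nat{a},\nat{b}$ to carry out the checks.
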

\begin{proof}
  Given a (usual) scheme with algebra \(\oW\) in \(\cC\), the induced  
  \(\FY\)-scheme corresponds to \(\tau\oW\), with \(d:\tau\tau\oW\ra\tau\oW\) 
  induced by the product on \(\FY\). The fact that this is an \(\FY\)-scheme 
  structure comes from the monoid axioms on \(\FY\).
  
  Given some other \(\FY\)-scheme \(\oX\), \(d:\tau\oX\ra\oX\), an algebra 
  map \(f:\oW\ra\oX\) determines a algebra maps \(\tau(f):\tau\oW\ra\tau\oX\) 
  and \(\tau^2(f):\tau^2\oW\ra\tau^2\oX\), resulting, upon composition with 
  \(d\) and \(\tau(d)\), in maps \(\ti{f}:\tau\oW\ra\oX\) and 
  \(\ti{f}_\tau:\tau\tau\oW\ra\tau\oX\). The fact that these maps determine a 
  map of \(\FY\)-schemes comes from diagram~\eqref{eq:ealg}.
  
  In the other direction, a map \(\tau\oW\ra\oX\) restricts to a map 
  \(\oW\ra\oX\). It is standard to check that these are inverse to each 
  other.
\end{proof}

\begin{remark}
  If \(\FY\) is the free formal monoid generated by some formal set 
  \(\FY_1\), the data in Definition~\ref{def:escheme} can be given in terms 
  of \(\FY_1\), as a map \(d_1:\tau_1(\oX)\ra\oX\), without the 
  condition~\eqref{eq:ealg}. The full \(d:\tau\oX\ra\oX\) can then be 
  reconstructed as the unique map extending \(d_1\) and 
  satisfying~\eqref{eq:ealg}. In the case where 
  \(\FY_1=\spec(\kk_0[\epsilon])\) (where \(\kk_0\) is a field of 
  characteristic \(0\) and \(\epsilon^2=0\)), the corresponding \(\FY\) is 
  the additive formal group (Example~\ref{ex:freeadd}), and the explicit 
  construction of \(\tau\oX\) and of \(d\) was (essentially) carried out 
  in~\Cite[4.4.3]{tannakian} (\(\tau\oX\) is the ind-object with maps \(q_n\) 
  there; the map \(d\) is essentially given by the \(t_n\)).
\end{remark}

\subsection{Passing to a sub-field}\label{ssc:descent}
In this section we prove an analogue of~\Deligne{Corollary~6.20}. The 
corollary says that if a Tannakian category \(\cC\) over a field \(\kk\) 
admits a fibre functor over some \(\kk\)-algebra, then it admits a fibre 
functor over a finite extension of \(\kk\). Put another way, a Tannakian 
category over a field \(\kk\) admits a fibre functor over the algebraic 
closure of \(\kk\). This result is useful, for example, in applications to 
Galois theory, as presented in the Introduction~(\Sec{ssc:galois}).

We maintain the notation and assumptions of~\Sec{ssc:lingp}. For convenience, 
we assume that \(\FY\) is a (formal) group.

\begin{defn}
  Let \(\kappa\) be a cardinal. We say that the formal group \(\FY\) is 
  \Def{\(\LK\)-generated} if there is a formal set \(\FZ\), such that \(\FY\) 
  is a quotient of the free formal group generated by \(\FZ\), and \(\FZ\) 
  can be presented by a filtering system of size smaller than \(\kappa\).

  We say that \(\FY\) is \Def{\(\LK\)-presented} if it is 
  \(\LK\)-generated, and the kernel of the quotient map is also 
  \(\LK\)-generated.

  We say finitely or countably generated (or presented) if \(\kappa\) is 
  \(\w\) or \(\w_1\), respectively.
\end{defn}

For example, a (discrete) group is finitely generated or presented if it is 
such in the usual sense. We note that when \(\kappa>\w\), we may replace this 
by the condition that \(\FY\) itself is given by a system of cardinality less 
than \(\kappa\).

\begin{example}
  If \(\kk\) has characteristic \(0\), then \(\spec(\kk[[x]])\) is finitely 
  generated (and presented) by \(\spec(\kk[\eps])\) (according to 
  Example~\ref{ex:freeadd}). In characteristic \(p>0\), it is countably 
  presented, since the quotients to finitely many of the variables \(e_i\) 
  (of the algebra \(TE\) in the same example) are all finite.
\end{example}

From now on, we fix \(\kappa\), and assume that \(\FY\) is 
\(\LK\)-presented.

\begin{defn}
  We say that an \(\FY\)-field \(\kk\) is \Def{\(\FY\)-closed} if any 
  non-empty \(\FY\)-pro-variety over \(\kk\) that can be given by a 
  \(\LK\)-system of varieties, has a \(\kk\)-point (i.e., an 
  \(\FY\)-morphism from \(\spec(\kk)\)).
\end{defn}

Model theoretically, an \(\FY\)-closed field is a weak version of a universal 
domain. We note that, unlike in algebraic geometry, a non empty 
\(\FY\)-variety need not have a point in an \emph{\(\FY\)-field}. For 
instance, the system of equations \(x^2=1,\sigma(x)=-x\) (where \(\sigma\) is 
an automorphism), has no solution in a field. In this case, \(\FY\)-closed 
fields do not exist, and the statement of Proposition~\ref{prp:descent} below  
is empty.  The proof does show, however, that there is a fibre functor over 
an \(\FY\)-algebra over \(\kk\), which is finite generated and ``simple'' as 
an \(\FY\)-algebra.

We note that when \(\FY\) is local, the action of it on a variety restricts 
to an action on each irreducible component, and therefore to its generic 
point, a field. Hence, in this case, \(\FY\)-closed fields exist by the 
standard arguments.

We say that an \(\FY\)-tensor category is generated by a collection \(S\) of 
objects if \(S_\tau=\St{\tau_E(\oX)}{\oX\in{}S}\) generates it as a tensor 
category. We note that, since \(\tau\) is an exact tensor functor, the tensor 
category generated by \(S_\tau\) is automatically an \(\FY\)-tensor category.

The following proposition is an analogue of~\Deligne{6.20} for our setting.  
As before, we only need to reduce to it, rather than re-prove it, using the 
current framework.

\begin{prop}\label{prp:descent}
  Let \((\cC,\tau,a,b)\) be an \(\FY\)-tensor category over an \(\FY\)-closed 
  field \(\kk\). Assume that \(\cC\) is generated as an \(\FY\)-tensor 
  category by one object, and that it admits a fibre functor over some 
  \(\FY\)-field extension of \(\kk\). Then it admit a fibre functor over 
  \(\kk\).
\end{prop}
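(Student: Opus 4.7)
The strategy is to reduce to Deligne's classical descent \Deligne{6.20} by constructing an $\FY$-pro-variety $\sS$ over $\kk$ classifying $\FY$-fibre functor structures on a fixed ordinary fibre functor, and then applying $\FY$-closedness. First, since any non-empty $\kk$-variety is an $\FY$-pro-variety via the trivial $\FY$-action, the $\FY$-closedness of $\kk$ forces $\kk$ to be algebraically closed. Viewing $\w_K$ as an ordinary fibre functor (forgetting its $\FY$-structure), \Deligne{6.20} together with the algebraic closedness of $\kk$ provides an ordinary fibre functor $\w_0:\cC\ra\Vec_\kk$, so that $\cC\simeq\Rep_{\Alg{\sG}}$ for $\Alg{\sG}=\Aut^\Ten(\w_0)$.

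Next, through $\w_0$, the $\FY$-structure of $\cC$ yields two tensor functors $\cC\ra\Plng[\Vec_\kk]{E}$: the composition $\Plng[\w_0]{E}\circ\tau$ and the base-change $\sigma\circ\w_0$, where $\sigma$ is the canonical $\FY$-structure on $\Vec_\kk$ (\rpt{pt:estruct}). An $\FY$-enhancement of $\w_0$ is precisely an $E$-linear tensor isomorphism $u$ between them, satisfying the $a$- and $b$-compatibilities of Definition~\ref{def:etensor}. By classical Tannakian theory, the space of such isomorphisms (before compatibilities) forms an $\Alg{\sG}$-torsor $\sP$ over $\spec(E)$; applying Weil restriction $p_*\sP$ and cutting out the $a$-compatible sections yields a pro-$\kk$-scheme, and interpreting the $b$-compatibility via the monoid law $m$ as an $\FY$-co-action equips it with the structure of an $\FY$-pro-variety $\sS$, whose $\FY$-morphisms $\spec(\kk)\ra\sS$ correspond bijectively to $\FY$-enhancements of $\w_0$.

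Finally, we verify the hypotheses of $\FY$-closedness for $\sS$. The single-generator hypothesis on $\cC$, together with $\LK$-presentedness of $\FY$, yields a $\LK$-presentation of $\sS$: the datum $u$ is determined by its value on the generator, and all iterative compatibilities propagate through a $\LK$-system. For non-emptiness, use \Deligne{6.20} once more: over an $\FY$-extension $L$ (obtained by extending the $\FY$-action across a suitable algebraic closure of $K$), the fibre functors $\w_K\Ten_K L$ and $\w_0\Ten_\kk L$ become isomorphic as ordinary fibre functors, transporting the $\FY$-structure of $\w_K$ into an $L$-$\FY$-point of $\sS$. Applying $\FY$-closedness then yields the desired $\FY$-morphism $\spec(\kk)\ra\sS$, i.e.\ an $\FY$-fibre functor over $\kk$.

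The main obstacle is the second step: extracting a genuine $\FY$-co-action on $p_*\sP$ from the $b$-compatibility (rather than treating it as a mere condition), in analogy with the co-monad construction of~\rpt{pt:prolong}; a secondary subtlety is arranging that the extension $L$ in the non-emptiness argument can itself be made into an $\FY$-field compatible with that on $K$.
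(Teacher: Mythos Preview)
Your strategy is genuinely different from the paper's, and it contains a concrete error at the very first step.

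You claim that any non-empty \(\kk\)-variety is an \(\FY\)-pro-variety ``via the trivial \(\FY\)-action'', and deduce that \(\kk\) is algebraically closed. But an \(\FY\)-scheme over \(\sZ=\spec(\kk)\) must carry an \(\FY\)-action \emph{compatible with the given action on \(\kk\)}; when \(\FY\) acts non-trivially on \(\kk\) there is no ``trivial action'' available. One can try to repair this by replacing \(\sV\) with its prolongation \(\tau\sV\), but then one must argue that \(\tau\sV\) is non-empty, which already requires producing \(\FY\)-points of \(\sV\) over some \(\FY\)-algebra --- precisely the kind of existence statement you are trying to prove. Without this step you have no ordinary fibre functor \(\w_0\) over \(\kk\), and the rest of your argument (the torsor \(\sP\), the Weil restriction, the \(\FY\)-enhancement space \(\sS\)) never gets off the ground.

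The paper avoids this trap by not descending the ordinary fibre functor to \(\kk\) at all. Instead it invokes the \emph{proof} (not merely the statement) of \Deligne{6.20}: for the finitely-generated tensor subcategory \(\cC_0=\langle\oX\rangle\) one obtains a fibre functor over an affine \emph{variety} \(\sS_0\); enlarging to \(\cC_E=\langle\oX,\Co{\tau}(\oX)\rangle\) gives a fibre functor over some \(\sS_E\), and the \(\FY\)-structure on \(\cC\) produces a map \(\sS_E\ra\tau\sS_0\). Iterating over a \(\LK\)-system yields an \(\FY\)-pro-variety \(\sS=(\sS_E)\) carrying a tautological \(\FY\)-fibre functor; non-emptiness of \(\sS\) comes directly from the assumed fibre functor over \(K\), with no need to compare it to a second fibre functor over an auxiliary \(\FY\)-field \(L\). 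The \(\FY\)-closedness of \(\kk\) is then applied once, at the end.

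So the paper builds the \(\FY\)-structure into the \emph{base} of the universal fibre functor, whereas you try to fix an ordinary fibre functor over \(\kk\) and then parameterise its \(\FY\)-enhancements. Your route is conceptually clean when it works, but it needs an independent argument that \(\FY\)-closed implies algebraically closed (true for many \(\FY\), e.g.\ local ones, but not justified by your ``trivial action'' remark), and it still leaves the two obstacles you flag. The paper's route sidesteps all three issues simultaneously.
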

\begin{proof}
  Let \(\FY=\spec(E)\) be finite.  We first note that in the case of vector 
  bundles, the operation \(\Co{\tau}\) can be described geometrically as 
  follows. Let \(\sX\) be a scheme over \(\sZ=\spec(\kk)\). By definition of 
  \(\mu^*\), we have a map \(\mu^*(\sX)\ra\sX\), and adjunction provides a 
  map \(p^*p_*\mu^*(\sX)\ra\mu^*(\sX)\). Let \(r\) be the composed map. Again 
  by the definition of \(p^*\) we also have a map 
  \(s:p^*p_*\mu^*(\sX)\ra{}p_*(\mu^*(\sX))=\tau(\sX)\). Now, if \(V\) is a 
  vector bundle on \(\sX\), we obtain a vector bundle \(s_*r^*(V)\) on 
  \(\tau(\sX)\) (this is a vector bundle since \(s\) is finite and flat). In 
  the case when \(\sX=\sZ\), this is \(\Co{\tau}\).

  Now, assume that \(\cC\) is generated (as an \(\FY\)-tensor category) by an 
  object \(\oX\), and has a fibre functor over some \(\FY\)-scheme over 
  \(\kk\).  Let \(\cC_0\) be the tensor category generated by \(\oX\).  
  According to the proof of~\Deligne{6.20}, there is an affine variety 
  \(\sS_0\) over \(\kk\), a fibre functor \(\w_0\) of \(\cC_0\) over 
  \(\sS_0\), and a faithfully flat groupoid scheme \(\sG_0\) over \(\sS_0\) 
  such that \(\cC_0\) is identified by \(\w_0\) with the category of 
  representations of \(\sG_0\).

  Similarly, the tensor category \(\cC_E\) generated by \(\oX\) and 
  \(\Co{\tau}(\oX)\) is equivalent the category of representations of a 
  faithfully flat groupoid scheme \(\sG_E\) over a variety \(\sS_E\). By the 
  first paragraph, the \(\FY\)-structure on \(\w\) produces (perhaps after a 
  flat, finite type localisation) a map \(\sS_E\ra\tau\sS_0\).

  Now, dropping the assumption that \(\FY\) is finite, we iterate the 
  construction for a system of size \(\LK\). We obtain a projective 
  system \(\sS=(\sS_E)\) of varieties, and a fibre functor \(\bar{\w}\) over 
  \(\sS\). The system \(\sS\) inherits an \(\FY\)-structure from the 
  prolongations, and by construction, \(\bar{\w}\) is an \(\FY\)-tensor 
  functor. By the assumption on \(\kk\), \(\sS\) has a \(\kk\)-point \(s\).  
  The fibre \(\bar{w}_s\) is an \(\FY\)-fibre functor over \(\kk\).
\end{proof}

\section{Questions and Speculations}\label{sec:questions}
In this section I point out some questions and other issues I would like to 
clarify. At least some of them should be easy to answer, but I do not see the 
answer immediately, and they are not directly relevant to the main point of 
the paper, so I leave them unanswered. Nevertheless, I think they are 
interesting.

\subsection{Sheaves on formal sets}\label{q:ablim}
As explained in~\Sec{ssc:fibred}, if \(\FY\) is a formal set (viewed as a 
filtering system), and \(\cC\ra\FY\) is a fibred category over \(\FY\), whose 
fibres are (say) sheaves of some kind over the corresponding finite piece, 
then \(\Lim[\FY]{\cC}\) can be viewed as the category of sheaves of the same 
kind on \(\FY\). However, it does not seem to be straightforward to deduce 
properties of \(\Lim[\FY]{\cC}\) from properties of the fibres.

For instance, assume that each fibre is abelian. May we conclude that the 
limit is abelian?  The answer is ``no'' in general, and ``yes'' if each 
pullback functor is exact. However, in our situation this assumption does not 
hold. In the context of~\ref{pt:pullbacks}, we know that the pullbacks are 
either left or right-exact (and, indeed, admit a left or right adjoint), but 
not both. Can anything be said in this case? In special cases such as 
completions of Noetherian local rings and finitely generated modules, one 
ends up with an abelian category, but this does depend on the Artin--Rees 
Lemma or similar results.

More generally, this appears like it should be a classical construction, but 
I don't know what would be a good reference.

\subsection{Cartier duality}
The usual Tannakian formalism can be viewed as a generalisation of Cartier 
duality to more general groups: Given a rigid tensor category \(\cC\), the 
tensor product determines a group structure on the set of isomorphism classes 
of invertible objects (in other words, this is the group of invertible 
elements in the Grothendieck ring of \(\cC\)). We may call this group the 
Picard group of \(\cC\). When \(\cC\) is the category of representations of 
an algebraic group \(\sG\) of multiplicative type (say, \(\sG_m\)), it is 
determined by the invertible objects, and the Picard group of \(\cC\) is the 
Cartier dual of \(\sG\). For general \(\sG\), it is thus reasonable to view 
\(\cC=\Rep_\sG\) as an analogue of the Cartier dual (and the recovery of 
\(\sG\) from \(\cC\) an analogue of recovering \(\sG\) from its dual).


When \(\sG=\sG_a\), there are no non-trivial invertible representations, so 
the usual Cartier dual carries no information, but in 
Example~\ref{ex:cartierga} it is shown that (in characteristic \(0\)), the 
additive formal group should be viewed as the Cartier dual of \(\sG_a\), 
which suggests that this formal group is in some sense the Picard group for 
the category of representations of \(\sG_a\).  The question is how to recover 
this formal group directly from the category \(\cC=\Rep_{\sG_a}\), and more 
generally, whether one can compute a meaningful (formal) Picard group like 
that for an arbitrary tensor category.

Another question related to the duality: In characteristic \(0\), the group 
schemes \(\sG_m\) and \(\sG_a\) correspond, respectively, to the cases of an 
automorphisms and a derivation, and they are the only affine groups of 
dimension \(1\). So it seems that we have shown that the only ``rank \(1\)'' 
operators in characteristic \(0\) are automorphisms and derivations. The 
question is how to explain what ``rank \(1\)'' means, without going through 
Cartier duality (this might be related to the classification mentioned 
in~\Cite[2.4]{buium}).

\subsection{Quotients}
In the usual treatment of differential and difference fields, an important 
role is played by the ``field of constants''. It played no role in this 
paper, but it is still interesting to define it in the general context 
of~\Sec{sec:ms}.

We have a categorical description. Given a scheme \(\oX\) (over \(\kk_0\)), 
one may view \(\oX\) as an \(\FY\)-scheme \(\ul{\oX}\) via the trivial 
action. We may then define quotient by \(\FY\) to be the ``left-adjoint'' to 
this functor: Given an \(\FY\)-scheme \(\oZ\), \(\oZ/\FY\) is defined as a 
\emph{covariant} functor on schemes \(\oX\) by 
\((\oZ/\FY)(\oX)=\Ehom_\FY(\oZ,\ul{\oX})\). The problem is that there is no 
reason that this functor should be representable (unless, of course, \(\FY\) 
is finite), and furthermore, it seems impossible to describe maps from a 
scheme to \(\oZ/\FY\).

In the affine case, we do have an algebra associated to \(\oZ/\FY\): if 
\(\oZ=\spec(\kk)\), with action of \(\FY\) given by \(m\) and projection 
given by \(p\) (both on the level of algebras), then \(\oZ/\FY\) corresponds 
to the sub-algebra given by \(p(a)=m(a)\). If \(\kk\) is a field, then this 
sub-algebra is a field as well, and this definition coincides with the usual 
one in the difference and differential case.

\subsection{Relation to crystals}
We note that the definition of an \(E\)-structure on an object \(\oX\), given 
in~\ref{def:escheme} for algebra objects \(\oX\), makes sense also for 
objects \(\oX\) of \(\cC\) itself (of course, \(d\) is no longer an algebra 
map).

Let \(\sY\) be a variety over a field \(\kk_0\) of characteristic \(0\).  
Recall (\Cite{lurie} or~\Cite[\S~1.5]{crystal}) that a \Def{crystal of 
quasi-coherent sheaves} on \(\sY\) consists of a quasi-coherent sheaf \(\qF\) 
on \(\sY\), together with isomorphisms \(\eta_{x,y}:\qF_x\ra\qF_y\) for any 
infinitesimally close \(R\)-points \(x\) and \(y\) of \(\sY\) (i.e., \(x\) 
and \(y\) have the same restriction to \(\spec(R)_{red}\); \(R\) is any 
\(\kk_0\)-algebra).  These isomorphisms should satisfy some natural 
properties (so that \(\qF\) is a ``locally trivial'' sheaf on the 
infinitesimal site of \(\sY\)).  We note that when the projection from \(R\) 
to \(R/I\) (\(I\) the nilradical of \(R\)) has a section, it is enough to 
specify these isomorphisms when \(x\) is the restriction of \(y\) to 
\(\spec(R/I)\).

Let \(\qF\) be such a crystal, and let \(\kk\) be the field of rational 
functions on \(\sY\). A derivation of \(\kk\) over \(\kk_0\) (i.e., a 
meromorphic vector-field on \(\sY\)) determines, for each \(n\), an 
\(E_n\)-point \(y\) of \(\sY\), where \(E_n=\kk[x]/x^{n+1}\), and the crystal 
data provides an isomorphism \(\qF_x\ra\qF_y\), where \(x\) is the point 
corresponding to the \(0\) vector-field. This is the same as an 
\(E_n\)-isomorphism \(E_n\Ten_y{}M\ra{}E_n\Ten{}M\), where \(M\) is the fibre 
of \(\qF\) on the generic point. Composing \(y\) with the co-multiplication 
\(m\) of \(E\) we likewise get isomorphisms involving \(E\Ten{}E\), and the 
compatibility conditions on the crystal imply that the 
diagram~\eqref{eq:ealg} commutes. Hence, a crystal structure on a 
quasi-coherent sheaf determines, for each vector-field, an 
\(E=\kk_0[[x]]\)-structure on it. A similar analysis applies crystals in 
other categories (e.g., a crystal of schemes, as in~\Cite{lurie}), and also 
for the Crystalline site in positive characteristic 
(cf.~\Cite[Prop.~5.1]{crystal}; note that the Crystalline site corresponds to 
\emph{usual}, rather than Hasse--Schmidt derivations, as in 
Example~\ref{ex:freeadd}). It would be interesting to understand the precise 
relation, and whether it is useful.


\subsection{Changing the monoid}
Throughout, we work with a fixed formal monoid \(\FY\). It makes sense, of 
course, to ask what happens when we let \(\FY\) vary. For example, in the 
context of several derivations, it could be desirable to pass to a subset of 
the derivations, or to a more convenient choice of them.

In particular, in the context of the Tannakian formalism, \(\FY\) is 
recovered from \(\Plng{E}\) (as \(\Eend(\1_E)\)), so one could ask to replace 
\(\Plng{E}\) by an abstract category of prolongations \(\cD\). This would 
entail finding conditions under which \(\cD\) is canonically isomorphic to 
\(\Plng{E}\) for \(E=\Eend(\1_\cD)\) (over a given tensor functor 
\(\cC\ra\cD\)). Such a formalism would treat all formal monoid actions at 
once. I leave it to some other time.

\subsection{More general monads}\label{q:monads}
Instead of working with with a formal monoid \(\FY\), as we did, we could 
work more generally with the corresponding monad \(\fW_\FY\), given by 
\(\fW_\FY(\sZ)=\FY\x\sZ\). The advantage is that we may then forget about 
\(\FY\) and work just with a monad \(\fW\) on the category of ind-schemes 
(over a given base \(\kk_0\); we would probably be assuming that \(\fW\) is 
``continuous'', i.e., determined by its restriction to schemes). Given such a 
monad, an \(\FY\)-scheme is replaced by a \(\fW\)-algebra \(\sZ\), and 
likewise \(\FY\)-schemes over \(\sZ\) are replaced by \(\fW\)-algebras over 
\(\sZ\). The main difference with our approach is that we are no longer 
assuming to have a functorial map \(p:\fW(\oX)\ra\oX\) (the projection).  
There are at least two interesting examples covered only by this more general 
approach: The \(p\)-adic Witt scheme of length \(2\), \(\fW=\fW_2\), 
corresponding to the arithmetic differential equations of~\Cite{buium} (See 
especially~\Cite[\S~2.4]{buium}), and its global analogue, the big Witt 
vector functor, corresponding to the theory \(\Lambda\)-spaces 
of~\Cite{borger} (which are offered there as a notion of spaces over 
\(\FF_1\)).

If \(\fW\) happens to have a right adjoint \(\tau_0\) (possibly going from 
schemes to pro-schemes), which is then automatically a co-monad, then an 
algebra \(\sZ\) for \(\fW\) determines a co-algebra \(t:\sZ\ra\tau_0\sZ\) for 
\(\tau_0\). Given a scheme \(\sX\) over \(\sZ\), we set 
\(\tau(\sX)=\tau_0(\sX)\x_t\sZ\), and call it the prolongation of \(\sX\) 
(viewed as a pro-scheme over \(\sZ\)). As in~\ref{pt:prolong}, \(\tau\) is a 
co-monad on pro-schemes over \(\sZ\), and is, by construction, right adjoint 
to the forgetful functor from \(\fW\)-schemes to schemes.

The main issue with extending the results of the paper is now to find the 
analogue of tensoring with \(E\) to this setting, i.e., we need a canonical 
way to extend \(\fW\) (or \(\tau\)) to a tensor category over \(\sZ\). This 
should be possible.

\printbibliography
\end{document}